\documentclass[12pt,oneside,reqno]{amsart}

\usepackage{amsmath,amssymb,amsthm,textcomp}
\usepackage{amsfonts,graphicx}
\usepackage[mathscr]{eucal}
\usepackage{color}
\usepackage{csquotes}
\usepackage[backend=bibtex,%
firstinits=true,%
doi=false,%
isbn=true,%
url=false,%
maxnames=99]{biblatex}%

\AtEveryBibitem{\clearfield{issn}}
\AtEveryCitekey{\clearfield{issn}}
\numberwithin{equation}{section}
\DeclareNameAlias{sortname}{last-first}
\theoremstyle{definition}
\usepackage{mathtools}
\numberwithin{equation}{section}

\newcommand{\ncom}{\newcommand}

\ncom{\beq}{\begin{equation}}
	\ncom{\eeq}{\end{equation}}
\ncom{\bea}{\begin{eqnarray*}}
	\ncom{\eea}{\end{eqnarray*}}
\ncom{\beqa}{\begin{eqnarray}}
	\ncom{\eeqa}{\end{eqnarray}}
\ncom{\nno}{\nonumber}
\ncom{\non}{\nonumber}
\ncom{\ds}{\displaystyle}
\ncom{\half}{\frac{1}{2}}
\ncom{\mbx}{\makebox{.25cm}}
\ncom{\hs}{\mbox{\hspace{.25cm}}}
\ncom{\rar}{\rightarrow}
\ncom{\Rar}{\Rightarrow}
\ncom{\noin}{\noindent}
\ncom{\bc}{\begin{center}}
	\ncom{\ec}{\end{center}}
\ncom{\sz}{\scriptsize}
\ncom{\rf}{\ref}
\ncom{\s}{\sqrt{2}}
\ncom{\sgm}{\sigma}
\ncom{\Sgm}{\Sigma}
\ncom{\psgm}{\sigma^{\prime}}
\ncom{\dt}{\delta}
\ncom{\Dt}{\Delta}
\ncom{\lmd}{\lambda}
\ncom{\Lmd}{\Lambda}
\ncom{\Th}{\Theta}
\ncom{\e}{\eta}
\ncom{\eps}{\varepsilon}
\ncom{\pcc}{\stackrel{P}{>}}
\ncom{\lp}{\stackrel{L_{p}}{>}}
\ncom{\dist}{{\rm\,dist}}
\ncom{\sspan}{{\rm\,span}}
\ncom{\re}{{\rm Re\,}}
\ncom{\im}{{\rm Im\,}}
\ncom{\sgn}{{\rm sgn\,}}
\ncom{\ba}{\begin{array}}
	\ncom{\ea}{\end{array}}
\ncom{\hone}{\mbox{\hspace{1em}}}
\ncom{\htwo}{\mbox{\hspace{2em}}}
\ncom{\hthree}{\mbox{\hspace{3em}}}
\ncom{\hfour}{\mbox{\hspace{4em}}}
\ncom{\vone}{\vskip 2ex}
\ncom{\vtwo}{\vskip 4ex}
\ncom{\vonee}{\vskip 1.5ex}
\ncom{\vthree}{\vskip 6ex}
\ncom{\vfour}{\vspace*{8ex}}
\ncom{\norm}{\|\;\;\|}
\ncom{\integ}[4]{\int_{#1}^{#2}\,{#3}\,d{#4}}
\ncom{\vspan}[1]{{{\rm\,span}\{ #1 \}}}
\ncom{\dm}[1]{ {\displaystyle{#1} } }
\ncom{\ri}[1]{{#1} \index{#1}}

\newtheorem{theorem}{\bf Theorem}[section]
\newtheorem{remark}{\bf Remark}[section]
\newtheorem{proposition}{Proposition}[section]
\newtheorem{lemma}{Lemma}[section]

\newtheoremstyle
{remarkstyle}
{}
{11pt}
{}
{}
{\bfseries}
{:}
{     }
{\thmname{#1} \thmnumber{#2} }

\theoremstyle{remarkstyle}

\def\eps{\varepsilon}

\date{\today}
\begin{document}
\title{On Elephant Random Walk with Delayed Amnesia}
\author[Manisha Dhillon]{Manisha Dhillon}
\address{Manisha Dhillon, Department of Mathematics, Indian Institute of Technology Bhilai, Durg 491002, India.}
\email{manishadh@iitbhilai.ac.in}
\author[Shyan Ghosh]{Shyan Ghosh}
\address{Shyan Ghosh, Department of Mathematics, Indian Institute of Technology Bhilai, Durg 491002, India.}
\email{shyanghosh@iitbhilai.ac.in}
\author[Kuldeep Kumar Kataria]{Kuldeep Kumar Kataria}
\address{Kuldeep Kumar Kataria, Department of Mathematics, Indian Institute of Technology Bhilai, Durg 491002, India.}
\email{kuldeepk@iitbhilai.ac.in}
\subjclass[2010]{Primary:  60G50, 60G42; Secondary: 60K50, 60F05}
\keywords{elephant random walk, martingales, law of large numbers, law of iterated logarithm, central limit theorem}

\begin{abstract}
In this paper, we introduce a modified elephant random walk that exhibits a transition from a uniform memory mechanism to a selective amnesic memory mechanism. Using a vector martingale approach, we study the asymptotic behaviour of the walk across different parameter regimes. In the diffusive and critical regimes, we establish almost sure convergence results, laws of iterated logarithm, asymptotic normality of the walk, and the growth rate of mean square displacement. In the superdiffusive regime, we prove an almost sure convergence result and obtain the corresponding mean square displacement rate for the walk. Also, we study some almost sure convergence results for its center of mass. Later, we extend the model by incorporating random step sizes and obtain asymptotic results for it.
\end{abstract}
\maketitle

\section{Introduction}
A simple random walk is one of the most widely studied stochastic models because of its mathematical simplicity and broad applicability. In a simple random walk, the successive increments are independent and identically distributed (iid) that lead to its Markovian structure, and so it does not retain any memory of its past. Therefore, such models typically exhibit diffusive behaviour. However, many real world phenomena show anomalous diffusion and long-range dependence features that cannot be captured by the simple random walk. This limitation has motivated the development of random walk models that incorporate memory effects. Sch\"utz and Trimper (2004) introduced elephant random walk (ERW) to study the long term memory effect on asymptotic behaviour of non-Markovian processes. The terminology `elephant' originates from the popular belief that elephants possess long memory. Unlike the simple random walk, the ERW evolves according to its entire past trajectory, and therefore exhibits strong dependence among increments. Depending on the value of memory parameter, the model may exhibit diffusive, critical, or superdiffusive behaviour. Thus, ERW is one of the most fundamental models exhibiting anomalous diffusion. Phenomena exhibiting anomalous diffusion can be observed across a wide range of physical, biological, and social systems. For example, it dictates human travel patterns (see Brockmann \textit{et al.} (2006)), plasma membrane ion channels (see Weigel \textit{et al.} (2011)). This behavior is also found in cell nucleus telomeres (see Bronstein \textit{et al.} (2009)), heartbeat intervals and DNA sequences (see Buldyrev et al. (1994)), diffusion in polymer networks (see Wong \textit{et al.} (2004)), \textit{etc.}

The standard ERW is one dimensional discrete time random walk on $\mathbb{Z}$. The walker starts from the origin and moves by one unit at each time step. Initially, the walker jumps to the right with probability $q\in[0,1]$ and to the left with probability $1-q$. Subsequently, for $n\ge 2$, one of the previous steps is selected uniformly at random from $\{1,2,\dots,n-1\}$. The walker then repeats the chosen step with probability $p\in[0,1]$ or moves in the opposite direction with probability $1-p$. The mathematical formulation is as follows:

Let $\{X_n\}_{n\ge1}$ be the sequence of increments. Then,
\begin{equation*}
	X_{1}=\begin{cases}
		+1\ \text{with probability}\ q,\\
		-1\ \text{with probability}\ 1-q,
	\end{cases}
\end{equation*}
and for every $n\ge1$,
\begin{equation*}
	X_{n+1}=\begin{cases}
		+X_{U(n)}\ \text{with probability}\ p,\\
		-X_{U(n)}\ \text{with probability}\ 1-p,
	\end{cases}
\end{equation*}
where $U(n)\sim$ Uniform$\{1,2,\dots,n\}$. The position of the walker after $n$ steps is given by
$S_n=X_1+X_2+\dots+ X_n$ such that $S_0=0$. As each future step depends on the entire history of the process, the ERW provides a natural framework for modelling systems with memory.

Over the past two decades, the ERW and its extensions have gained considerable attention of the researchers. Different probabilistic techniques such as martingale approach, stochastic approximation method, branching structures and P\'olya-type urn representations are used to study its asymptotic behaviour. Baur and Bertoin (2016) established connections between the ERW and P\'olya-type urn models, while Bercu (2018) obtained several asymptotic results including laws of large numbers, quadratic strong laws, laws of iterated logarithm and asymptotic normality using martingale techniques. Coletti \textit{et al.} (2017a, 2017b) derived limit theorems and invariance principles for the ERW. Several extensions of the ERW have been introduced including the ERW with stops (see Kumar \textit{et al.} (2010), Bercu (2022)), ERW with restricted memory (see Gut and Stadtm\"uller (2021a, 2021b)), ERW with random step sizes (see Dedecker \textit{et al.} (2023), Fan and Shao (2024)), and references therein. However, the ERW in multidimensional setting have been studied by Bercu and Laulin (2019), Bercu (2025), while its properties such as recurrence and transience are discussed in Curien and Laulin (2024) and Qin (2025). For other extensions of the ERW, we refer the reader to  Gangopadhyay and Maulik (2022), Maulik \textit{et al.} (2025), Roy \textit{et al.} (2025), Podder and Roy (2026), Peres and Qin (2026), Dhillon and Kataria (2026), and references therein.

Gut and Stadm\"uller (2021a, 2021b) extended the study of ERW by considering situations in which the walker has only a restricted memory, while Laulin (2022) introduced an ERW with a smooth amnesia mechanism. Also, Bertenghi and Laulin (2025) introduced a variation of the step-reinforced random walk with amnesic memory, and Majumdar and Maulik (2025) studied step reinforced random walk with regularly varying memory. Motivated by the fact that memory mechanisms in many real world systems may evolve over time, we introduce a modified elephant random walk whose dependence mechanism over past steps changes after a prescribed time instant.

In the standard ERW, every past step remains equally likely to be recalled throughout the entire evolution of the process. However, in many practical situations, the influence of distant past events gradually weakens and memory retrieval becomes more selective as time progresses. To capture this feature, we consider a random walk, namely, the ERW with delayed amnesia that initially evolves according to the standard ERW dynamics and subsequently switches to an amnesic memory regime. More precisely, up to a fixed time $(a-1)$, the walker follows the usual uniform memory rule of the standard ERW. After this transition point, the walker recalls past steps according to a non uniform probability distribution involving Gamma functions, which increasingly favours more recent steps. Therefore, the ERW with delayed amnesia exhibits a deterministic transition from uniform memory mechanism to a progressively selective memory mechanism. We study several asymptotic results of the walk and determine how the change in memory mechanism affects the long-term behaviour of the process.

Although our model is inspired by the amnesic ERW model introduced by Laulin (2022), the two memory mechanisms are fundamentally different. In Laulin (2022), the amnesic effect is present throughout the evolution of the walk. In contrast, our model begins with the uniform memory mechanism of the standard ERW and then introduces amnesia after the threshold parameter $a$. Thus, the parameter $a$ marks the onset of memory loss and governs the transition from a uniform memory regime to an amnesic one. A potential application of the proposed process is in the stochastic modelling of autonomous navigation strategies for planetary exploration missions. Such missions are often designed with a predetermined exploration phase followed by a targeted scientific operation phase. During the initial exploration phase, the rover systematically surveys an unknown terrain to construct a global map. During this stage, every previously traversed path is assumed to be equally likely to influence its subsequent movement, reflecting the need for broad and unbiased exploration. Once the predetermined exploration phase is completed, the mission shifts to localized scientific investigation. At this stage, recent navigation history becomes more informative than distant history, as the rover must adapt to the local terrain and current operational conditions. This planned change in navigation strategy can be modelled by the ERW with delayed amnesia, in which the walker follows the standard ERW with uniform memory up to a fixed time $a-1$ and thereafter transitions to an amnesic memory mechanism that increasingly favours more recent steps. 

%
The paper is organized as follows:

In Section \ref{prel}, we set some notations and collect some preliminary results that will be used later. In Section \ref{interm}, we introduce the ERW with delayed amnesia and construct a vector martingale which will be used to obtain the limiting results. Then, we study the asymptotic results for ERW with delayed amnesia in different regimes. In the diffusive and critical regimes, we establish several almost sure convergence results for the walk, including the law of iterated logarithm, the asymptotic normality and the growth rate of mean square displacement. In the superdiffusive regime, we prove both almost sure convergence and growth rate of mean square displacement for the walk. Moreover, we study the almost sure convergence results for the center of mass associated with the ERW with delayed amnesia. In the last section, we study an extension of our model by considering the random step sizes.

\section{Preliminaries}\label{prel}
The following notations will be used throughout the paper: Let $\mathbb{C}$, $\mathbb{R}$, $\mathbb{Z}$ and $\mathbb{N}$ be the set of complex numbers, real numbers, integers and positive integers, respectively. Also, let $\mathbb{R}^d=\mathbb{R}\times\mathbb{R}\times\cdots\times\mathbb{R}$ ($d$ copies) and $\mathbb{I}_G$ denote the indicator function of a set $G$. For two sequences $\{x_n\}_{n\ge 1}$ and $\{y_n\}_{n\ge 1}$, the notation $x_n \sim y_n$ indicates that $x_n/y_n \to 1$ as $n \to \infty$. However, for two positive sequences $\{x_n\}_{n\ge 1}$ and $\{y_n\}_{n\ge 1}$, the notations $x_n=O(y_n)$ indicates that $\{x_n/y_n\}_{n\ge 1}$ is bounded. For any matrix $A$, its transpose is denoted by $A^t$. Also, let $\lambda_{\max}(A)$ and $\|A\|$ be the maximum eigenvalue of $A$ and $\sqrt{\lambda_{\max}(A^tA)}$, respectively. Moreover, $\xrightarrow{p}$, $\xrightarrow{d}$ and $\overset{d}{=}$ denotes convergence in probability, convergence in distribution and equality in distribution, respectively. In expressions involving conditional expectations, the abbreviation a.s. for almost surely is omitted to avoid repetition.
 
Now, we collect some known results which will be used later.
\begin{theorem}[Duflo (1997), Theorem 1.3.15]\label{thm_LLN1}
Let $\{M_n, \mathcal{F}_n\}_{n\ge 0}$ be a square-integrable martingale with predictable quadratic variation $\{\langle M\rangle_n\}_{n\ge 0}$. Also, let $\langle M\rangle_\infty=\lim_{n\to\infty}\langle M\rangle_n$. Then,\vspace{.1cm}\\
\noindent{(i)}  $\lim_{n\to\infty}M_n=M_\infty$ a.s. on $\{\langle M\rangle_\infty < \infty\}$, where $M_\infty$ is a finite random variable, and\vspace{.1cm}\\		
\noindent{(ii)}  $\lim_{n\to\infty}M_n/\langle M\rangle_n=0$ a.s. on $\{\langle M\rangle_\infty = \infty\}$.
\end{theorem}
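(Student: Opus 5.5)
The plan is the classical route through Doob's convergence theorem on the event $\{\langle M\rangle_\infty<\infty\}$, followed by a Kronecker/Toeplitz-type argument on its complement. Write $\Delta M_k=M_k-M_{k-1}$. Recall that $\langle M\rangle_n=\sum_{k=1}^n \mathbb{E}[(\Delta M_k)^2\mid\mathcal{F}_{k-1}]$ is predictable and nondecreasing. Fix $A>0$ and define the stopping time
\[
T_A=\inf\{n\ge 0:\langle M\rangle_{n+1}>A\}.
\]
Predictability of $\langle M\rangle$ makes $T_A$ a genuine stopping time. The stopped martingale $M_{n\wedge T_A}$ satisfies
\[
\mathbb{E}\bigl[(M_{n\wedge T_A}-M_0)^2\bigr]=\mathbb{E}\bigl[\langle M\rangle_{n\wedge T_A}\bigr]\le A,
\]
so it is bounded in $L^2$. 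Doob's martingale convergence theorem then shows that it converges a.s. to a finite limit. On $\{\langle M\rangle_\infty\le A\}$ we have $T_A=\infty$, so $M_n$ converges there. Taking the union over $A\in\mathbb{N}$ gives part (i).

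For part (ii), set $a_k=1+\langle M\rangle_k$, which is predictable, positive and nondecreasing. Consider the martingale transform
\[
N_n=\sum_{k=1}^n \frac{\Delta M_k}{a_k}.
\]
Its predictable quadratic variation is
\[
\langle N\rangle_n=\sum_{k=1}^n\frac{\langle M\rangle_k-\langle M\rangle_{k-1}}{(1+\langle M\rangle_k)^2}.
\]
Comparing this sum with $\int dx/(1+x)^2$ gives the bound
\[
\frac{\langle M\rangle_k-\langle M\rangle_{k-1}}{(1+\langle M\rangle_k)^2}\le \frac{1}{1+\langle M\rangle_{k-1}}-\frac{1}{1+\langle M\rangle_k},
\]
so $\langle N\rangle_\infty\le 1$ everywhere. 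By part (i), $N_n$ converges a.s. to a finite limit. On $\{\langle M\rangle_\infty=\infty\}$ we have $a_n\uparrow\infty$, and Kronecker's lemma then yields
\[
\frac{M_n-M_0}{a_n}\to 0.
\]
Since $a_n/\langle M\rangle_n\to1$ and $M_0/\langle M\rangle_n\to0$ on this event, we conclude $M_n/\langle M\rangle_n\to0$ a.s.

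The main point requiring care is the telescoping comparison bound for $\langle N\rangle_n$. It follows from $(1+\langle M\rangle_k)^2\ge(1+\langle M\rangle_{k-1})(1+\langle M\rangle_k)$. A second point is using the random weights $a_k$ in Kronecker's lemma. This is legitimate pathwise because Kronecker's lemma is deterministic. The predictability of $a_k$ matters only for ensuring that $N$ is a martingale with the stated bracket.
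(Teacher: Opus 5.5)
Your proof is correct; the paper gives no proof of its own and simply quotes the result from Duflo (1997). What you wrote is the classical textbook argument. For (i), you localize with the stopping time $T_A$, which is legitimate because $\langle M\rangle_{n+1}$ is $\mathcal{F}_n$-measurable; the stopped martingale is then bounded in $L^2$. For (ii), you use the transform $\sum_k \Delta M_k/(1+\langle M\rangle_k)$, whose bracket is at most $1$ by your telescoping bound, and finish with Kronecker's lemma applied $\omega$ by $\omega$ with the nondecreasing weights $1+\langle M\rangle_n$. The details you flag are all handled correctly, including $\langle M\rangle_{n\wedge T_A}\le A$ and the final replacement of $1+\langle M\rangle_n$ by $\langle M\rangle_n$.
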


\begin{theorem}[Duflo (1997), Theorem 1.3.24 ]\label{thm_LLN2}
Let $\{\varepsilon_n\}_{n\ge 0}$ be a square-integrable random sequence which takes values in $\mathbb{C}^d$ and is adapted to some filtration $\{\mathcal{F}_n\}_{n\ge 0}$ such that $\mathbb{E}(\varepsilon_{n+1}|\mathcal{F}_n)=0$ and $\sup_{n\ge1}\mathbb{E}(\parallel\varepsilon_{n+1}\parallel^2|\mathcal{F}_n)\le C$ a.s., where $C$ is a finite random variable. Also, let $\{\Phi_n\}_{n\ge 0}$ be a sequence of complex $d$-dimensional random variables which is adapted to $\{\mathcal{F}_n\}_{n\ge 0}$, and $s_n=\sum_{k=0}^{n}\parallel\Phi_k\parallel^2$, $s_\infty=\lim_{n\to\infty}s_n$ and $M_n=\sum_{k=1}^{n}\langle \Phi_{k-1},\varepsilon_k\rangle$. Then, \vspace{.1cm}\\
\noindent{(i)}  $\{M_n\}_{n\ge 0}$ converges a.s. on the set $\{s_\infty <\infty\}$, and	\vspace{.1cm}\\
\noindent{(ii)} $\lim_{n\to\infty}M_n/s_{n-1}=0$ a.s. on the set $\{s_\infty =\infty\}$ (law of large numbers (LLN)).\vspace{.1cm}\\
Also, the rate of this LLN can be specified as follows: $|M_n|^2=O((\log s_{n-1})^{1+\gamma}s_{n-1})$ a.s. for all $\gamma>0$.

\end{theorem}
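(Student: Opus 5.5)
The statement just above is Theorem \ref{thm_LLN2}, Duflo's strong law for martingale transforms with a conditionally bounded noise sequence. The natural route is to reduce it to Theorem \ref{thm_LLN1}. By splitting $\langle\Phi_{k-1},\varepsilon_k\rangle$ into real and imaginary parts and into its $d$ coordinate contributions, it suffices to treat real scalar martingales. Set $M_n=\sum_{k=1}^n\langle\Phi_{k-1},\varepsilon_k\rangle$. Since $\Phi_{k-1}$ is $\mathcal{F}_{k-1}$-measurable and $\mathbb{E}(\varepsilon_k|\mathcal{F}_{k-1})=0$, this is a martingale, at least locally. To avoid integrability issues, I first localize: for $K>0$, use the stopping-time truncations $\Phi_{k}\mathbb{I}_{\{s_k\le K,\,C\le K\}}$. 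Here $C$ is a finite random variable, so I approximate it by the predictable bound $\sup_{j\le k}\mathbb{E}(\|\varepsilon_{j+1}\|^2|\mathcal{F}_j)$. The truncated transform is square integrable, and the sets where the truncation is inactive increase to the full space as $K\to\infty$.

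By Cauchy--Schwarz, the predictable quadratic variation satisfies
\begin{equation*}
\langle M\rangle_n=\sum_{k=1}^n\mathbb{E}\bigl(|\langle\Phi_{k-1},\varepsilon_k\rangle|^2\big|\mathcal{F}_{k-1}\bigr)\le C\,s_{n-1}.
\end{equation*}

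For part (i), on $\{s_\infty<\infty\}$ we get $\langle M\rangle_\infty<\infty$, so Theorem \ref{thm_LLN1}(i) gives a.s. convergence.

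For part (ii), on $\{s_\infty=\infty\}$ I would not divide by $\langle M\rangle_n$ directly, since that quantity may be much smaller than $s_{n-1}$. Instead I form the auxiliary martingale $N_n=\sum_{k=1}^n \langle\Phi_{k-1},\varepsilon_k\rangle/(1+s_{k-1})^{1/2}(\log(2+s_{k-1}))^{(1+\gamma)/2}$. Its bracket is bounded by
\begin{equation*}
C\sum_k \frac{s_{k-1}-s_{k-2}}{(1+s_{k-1})\log^{1+\gamma}(2+s_{k-1})},
\end{equation*}
and comparison with $\int dx/(x\log^{1+\gamma}x)$ shows this is finite. Hence $N_n$ converges a.s. Kronecker's lemma with weights $(1+s_{k-1})^{1/2}\log^{(1+\gamma)/2}(2+s_{k-1})\uparrow\infty$ then gives $M_n=o\bigl(s_{n-1}^{1/2}\log^{(1+\gamma)/2}s_{n-1}\bigr)$. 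This yields both $M_n/s_{n-1}\to0$ and the rate $|M_n|^2=O((\log s_{n-1})^{1+\gamma}s_{n-1})$.

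The main obstacle is the integral comparison when the increments $\|\Phi_{k-1}\|^2$ are large relative to $s_{k-2}$. To handle it I use the weight $1+s_{k-1}$, which includes the current term, in the denominator; then each summand is at most $\int_{s_{k-2}}^{s_{k-1}}dx/((1+x)\log^{1+\gamma}(2+x))$ up to a constant, provided the ratio $(1+s_{k-1})/(1+s_{k-2})$ stays controlled. Where that ratio blows up, I bound the term directly by $1/\log^{1+\gamma}$, and I expect this still to be summable only after checking it carefully. An alternative is to replace the weights by $s_{k-1}$ shifted by one index, since $\Phi_{k-1}$ is known at time $k-1$. The final step is removing the localization by letting $K\to\infty$ along integers.
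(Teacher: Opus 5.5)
The paper does not prove this statement; it is quoted from Duflo (1997), so there is no in-paper argument to compare against. Judged on its own, your main line is the standard one and it is correct. You form the predictably weighted transform $N_n=\sum_{k\le n}\langle\Phi_{k-1},\varepsilon_k\rangle/w_{k-1}$ with $w_{k-1}^2=(1+s_{k-1})\log^{1+\gamma}(2+s_{k-1})$, whose bracket is finite. Its real and imaginary parts then converge by Theorem \ref{thm_LLN1}(i). Kronecker's lemma gives $|M_n|^2=o\bigl(s_{n-1}(\log s_{n-1})^{1+\gamma}\bigr)$ on $\{s_\infty=\infty\}$, which yields both the LLN and the rate. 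Several of your side remarks, however, are mistaken and should be fixed.

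First, the ``main obstacle'' is not there. The function $f(x)=1/\bigl((1+x)\log^{1+\gamma}(2+x)\bigr)$ is decreasing on $[0,\infty)$. Your summand evaluates it at the \emph{right} endpoint of $[s_{k-2},s_{k-1}]$, so for every $k\ge2$, whatever the ratio $s_{k-1}/s_{k-2}$,
\begin{equation*}
\|\Phi_{k-1}\|^2f(s_{k-1})\le\int_{s_{k-2}}^{s_{k-1}}f(x)\,\mathrm{d}x,\qquad\text{so}\qquad\sum_{k\ge2}\|\Phi_{k-1}\|^2f(s_{k-1})\le\int_0^\infty f(x)\,\mathrm{d}x<\infty .
\end{equation*}
Drop the case distinction, and do not take your ``alternative''. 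Weighting by $s_{k-2}$ evaluates $f$ at the left endpoint, and the series can then diverge (take $s_k=\exp(e^k)$). Weighting by $s_k$ destroys predictability. The choice $s_{k-1}$ is exactly right.

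Second, the truncation $\Phi_k\mathbb{I}_{\{s_k\le K,\dots\}}$ is useless for part (ii), since on $\{s_\infty=\infty\}$ each truncated process eventually differs from $M$. Fortunately no localization is needed for $N$. Because $\|\Phi_{k-1}\|^2\le s_{k-1}$, you get $|\Delta N_k|\le(\log 2)^{-(1+\gamma)/2}\|\varepsilon_k\|$, so $N$ is square integrable. Moreover, Theorem \ref{thm_LLN1}(i) is a statement on the event $\{\langle N\rangle_\infty<\infty\}$, so it needs no bound on $C$. The same $N$ also gives part (i) without localization: on $\{s_\infty<\infty\}$ the weights increase to a finite limit, so the Abel summation $M_n=w_{n-1}N_n-\sum_{k=1}^{n-1}(w_k-w_{k-1})N_k$ converges.

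Third, your reason for not dividing by $\langle M\rangle_n$ is backwards. Since $\langle M\rangle_n\le Cs_{n-1}$, a small bracket only helps: $M_n$ converges on $\{\langle M\rangle_\infty<\infty\}$, and otherwise $|M_n|/s_{n-1}\le C|M_n|/\langle M\rangle_n\to0$. What genuinely requires the weighted martingale is the logarithmic rate, which Theorem \ref{thm_LLN1} as quoted does not provide, together with integrability when $\Phi$ is unbounded.
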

\subsection{Law of iterated logarithm for martingales}
Let $\{M_n,\mathcal{F}_n\}_{n\geq 1}$ be a martingale such that $\mathbb{E}(M_1)=0$. Also, let $\mathcal{F}_0=\{\emptyset,\Omega\}$,  $u_n=\sqrt{2\log\log t_n^2}$ and $t_n^2=\sum_{k=1}^{n}\mathbb{E}((\Delta M_k)^2\mid\mathcal{F}_{k-1})$, where $\Delta M_n=M_n-M_{n-1}$, $n\geq 1$
are martingale differences with $M_0=0$.

%

\begin{theorem}[Stout (1970), Theorem 1, Theorem 2]\label{stoutthm2}
If $\lim_{n\to\infty}t_n^2=\infty$ a.s. and $|\Delta M_n|
\leq
K_nt_n/u_n$, $n\geq 1$
for some $K_n$ that is $\mathcal{F}_{n-1}$-measurable such that $\lim_{n\to \infty}K_n= 0$ a.s. then
$\limsup_{n\to\infty}M_n/(t_nu_n)
= 1$ a.s.
\end{theorem}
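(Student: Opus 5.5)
This is a classical result, and I would prove it by Kolmogorov's route for the law of the iterated logarithm, replacing independence with exponential supermartingales built from the conditional variances. The key tool is an exponential inequality. Suppose $|\Delta M_k|\le c$ with $c$ being $\mathcal{F}_{k-1}$-measurable, and $\lambda c\le 1$. Then the elementary bound $e^{x}\le 1+x+\tfrac{x^2}{2}(1+x)$ for $x\le 1$ gives
\[
\mathbb{E}\bigl(e^{\lambda\Delta M_k}\mid\mathcal{F}_{k-1}\bigr)\le \exp\Bigl(\tfrac{\lambda^2}{2}(1+\lambda c)\,\mathbb{E}((\Delta M_k)^2\mid\mathcal{F}_{k-1})\Bigr).
\]
Consequently $\exp\bigl(\lambda M_n-\tfrac{\lambda^2}{2}(1+\lambda c)t_n^2\bigr)$ is a nonnegative supermartingale. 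A matching lower bound $\mathbb{E}(e^{\lambda\Delta M_k}\mid\mathcal{F}_{k-1})\ge\exp\bigl(\tfrac{\lambda^2}{2}(1-\lambda c)\,\mathbb{E}((\Delta M_k)^2\mid\mathcal{F}_{k-1})\bigr)$ also holds. The hypothesis $|\Delta M_n|\le K_nt_n/u_n$ with $K_n\to0$ is exactly what makes $\lambda c\to0$ at the relevant scale $\lambda\approx u_n/t_n$.

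\emph{Upper half.} Fix $\theta>1$ and $\varepsilon>0$. Define the stopping times $\tau_j=\inf\{n: t_{n+1}^2\ge\theta^{2j}\}$; these are stopping times because $t_{n+1}^2$ is $\mathcal{F}_n$-measurable. They are finite a.s. since $t_n^2\to\infty$. To keep the truncation predictable, I would stop the martingale at the first time $K_n$ exceeds a small $\delta$; on the event that this never happens, which has probability tending to one as the starting index grows, the two processes coincide. Apply the maximal inequality for the supermartingale above, with $\lambda=(1+\varepsilon)u(\theta^{2j})/\theta^{j}$, over the block $[\tau_{j-1},\tau_j]$. This gives
\[
P\Bigl(\max_{n\le\tau_j}M_n>(1+\varepsilon)\theta^{j}u(\theta^{2j})\Bigr)\le \exp\bigl(-(1+\varepsilon')\log\log\theta^{2j}\bigr)=O(j^{-1-\varepsilon'}),
\]
which is summable in $j$. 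Borel–Cantelli then gives $\limsup M_n/(t_nu_n)\le(1+\varepsilon)\theta$ a.s. Letting $\theta\downarrow1$ and $\varepsilon\downarrow0$ yields the upper bound $\le1$.

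\emph{Lower half.} This is the main obstacle, because the block increments $M_{\tau_j}-M_{\tau_{j-1}}$ are not independent, so the second Borel–Cantelli lemma cannot be applied directly. I would use Lévy's conditional Borel–Cantelli lemma instead. Take $\theta$ large. It suffices to show
\[
\sum_j P\bigl(M_{\tau_j}-M_{\tau_{j-1}}>(1-\varepsilon)\,\theta^{j}u(\theta^{2j})\mid\mathcal{F}_{\tau_{j-1}}\bigr)=\infty \quad\text{a.s.}
\]
The conditional variance of the block is $\approx\theta^{2j}(1-\theta^{-2})$ by the definition of the $\tau_j$, up to one overshoot step that is negligible by the size hypothesis. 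The required conditional lower tail bound, roughly $\exp\bigl(-(1-\varepsilon)^{2}(1+\eta)\log\log\theta^{2j}\bigr)$, is obtained by the standard exponential tilting (Kolmogorov's converse) argument. One uses the two-sided exponential bounds above under the tilted measure $dQ\propto\exp(\lambda\Delta M)\,dP$, together with Chebyshev's inequality under $Q$, to show that the sum concentrates near its tilted mean $\lambda t^2$. Its terms behave like $j^{-(1-\varepsilon)^2(1+\eta)}$, which diverge. Hence the block exceeds the threshold infinitely often.

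Finally, I would combine the block lower bound with the upper bound applied to $-M$, which gives $M_{\tau_{j-1}}\ge-(1+\varepsilon)\theta^{j-1}u$ eventually. This yields
\[
\limsup_{n\to\infty}\frac{M_n}{t_nu_n}\ge(1-\varepsilon)\sqrt{1-\theta^{-2}}-\frac{1+\varepsilon}{\theta}.
\]
Letting $\theta\to\infty$ and $\varepsilon\to0$ gives the lower bound $\ge1$. The delicate points are the uniformity in the tilting estimate when $K_n$ is random, and controlling the overshoot of $t_{\tau_j}$ past $\theta^{j}$. Both are handled by the localization on $\{\sup_{n\ge N}K_n\le\delta\}$ introduced in the upper half.
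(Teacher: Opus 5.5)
The paper gives no proof of this statement; it is quoted from Stout (1970). Your plan uses exponential supermartingale bounds along the stopping times at which $t_{n+1}^2$ crosses $\theta^{2j}$ for the upper half, and Kolmogorov's tilted lower exponential bound with L\'evy's conditional Borel--Cantelli lemma for the lower half. That is essentially the Kolmogorov-type martingale argument behind the cited result, and it is sound.

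Two small repairs are needed:
\begin{itemize}
\item The elementary inequality should read $e^{x}\le 1+x+\frac{x^{2}}{2}(1+|x|)$. With $1+x$ in place of $1+|x|$ it fails for $x<0$, though your conditional bound is unaffected.
\item In the lower half, plain stopping at the first time $K_n>\delta$ can cut a block short, so its conditional variance is no longer pinned near $\theta^{2j}(1-\theta^{-2})$. Also, $\{\sup_{n\ge N}K_n\le\delta\}$ is not $\mathcal{F}_{\tau_{j-1}}$-measurable, so you cannot simply condition on it in L\'evy's lemma. Instead, continue the martingale after that time with auxiliary, predictably scaled small increments, run the argument for the continued process, and transfer the conclusion to the event $\{\sup_{n\ge N}K_n\le\delta\}$.
\end{itemize}
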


\subsection{Central limit theorem (CLT)}
The following CLT for martingales will be used:	
\begin{theorem}[Duflo (1997), Corollary 2.1.10]\label{CLT}	Let $\{M_n,\mathcal{F}_n\}_{n\ge 0}$ be a real, square-integrable vector martingale whose predictable quadratic variation is $\{\langle M\rangle_n\}_{n\ge 0}$. Also, let the following assumptions hold for a deterministic real sequence $\{a_n\}_{n\ge 0}$ which increases to $\infty$:\\
\noindent {(i)} $a_n^{-1}\langle M\rangle_n\xrightarrow{p}\Lambda$, where $\Lambda$ is a deterministic symmetric positive semi-definite matrix, and\\
\noindent {(ii)} Lindeberg's condition is satisfied, that is, for all $\varepsilon>0$,
\begin{equation*}
a_n^{-1}\sum_{k=1}^{n}\mathbb{E}(\parallel M_k-M_{k-1}\parallel^2 \mathbb{I}_{\{\parallel M_k-M_{k-1}\parallel \ge \varepsilon \sqrt{a_n}\}}|\mathcal{F}_{k-1})\xrightarrow{p} 0.
\end{equation*}
Then, $a_n^{-1/2}M_n\xrightarrow{d} \mathcal{N}(0,\Lambda)$. 
\end{theorem}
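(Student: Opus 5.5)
The plan is to reduce the vector statement to a scalar one with the Cramér--Wold device and then prove a scalar martingale CLT for a triangular array by the characteristic-function (Lévy) method. Fix $u\in\mathbb{R}^d$ and set $\xi_{n,k}=a_n^{-1/2}u^t(M_k-M_{k-1})$ for $1\le k\le n$. These are martingale differences with respect to $\{\mathcal{F}_k\}$. By (i),
\begin{equation*}
V_n:=\sum_{k=1}^n\mathbb{E}(\xi_{n,k}^2|\mathcal{F}_{k-1})=a_n^{-1}u^t\langle M\rangle_n u\xrightarrow{p}\sigma^2:=u^t\Lambda u .
\end{equation*}
Since $|u^t x|\le\|u\|\,\|x\|$, condition (ii) gives the conditional Lindeberg condition $\sum_{k}\mathbb{E}(\xi_{n,k}^2\mathbb{I}_{\{|\xi_{n,k}|\ge\varepsilon\}}|\mathcal{F}_{k-1})\xrightarrow{p}0$, after replacing $\varepsilon$ by $\varepsilon/\|u\|$. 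If $u^tM_n a_n^{-1/2}\xrightarrow{d}\mathcal{N}(0,\sigma^2)$ for every $u$, Cramér--Wold yields $a_n^{-1/2}M_n\xrightarrow{d}\mathcal{N}(0,\Lambda)$. The case $\sigma^2=0$ is trivial: the $L^2$ bound on the stopped sum below gives convergence to $0$ in probability.

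Next I would make the conditional variance bounded by stopping. Let $\tau_n=\max\{m\le n: \sum_{k\le m}\mathbb{E}(\xi_{n,k}^2|\mathcal{F}_{k-1})\le\sigma^2+1\}$. This is a stopping time because each summand is $\mathcal{F}_{k-1}$-measurable. Put $\tilde\xi_{n,k}=\xi_{n,k}\mathbb{I}_{\{k\le\tau_n\}}$. Since $V_n\xrightarrow{p}\sigma^2$, we have $\mathbb{P}(\tau_n<n)\to0$, so $\sum_k\tilde\xi_{n,k}$ and $\sum_k\xi_{n,k}$ have the same limit law. The truncated array keeps the martingale-difference property and the Lindeberg condition, its conditional variance $\tilde V_n$ still converges to $\sigma^2$ in probability, and now $\tilde V_n\le\sigma^2+1$ deterministically. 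To have also $\max_k\mathbb{E}(\tilde\xi_{n,k}^2|\mathcal{F}_{k-1})\to0$, I would note that this maximum is at most $\varepsilon^2+\sum_k\mathbb{E}(\xi_{n,k}^2\mathbb{I}_{\{|\xi_{n,k}|\ge\varepsilon\}}|\mathcal{F}_{k-1})$, which tends to $0$ in probability.

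For the characteristic function, fix $\theta$ and write $\phi_{n,k}=\mathbb{E}(e^{i\theta\tilde\xi_{n,k}}|\mathcal{F}_{k-1})$. Using $e^{ix}=1+ix-x^2/2+r(x)$ with $|r(x)|\le\min(x^2,|x|^3/6)$, the Lindeberg condition gives
\begin{equation*}
\sum_k|\phi_{n,k}-1+\tfrac{\theta^2}{2}\mathbb{E}(\tilde\xi_{n,k}^2|\mathcal{F}_{k-1})|\xrightarrow{p}0 .
\end{equation*}
Combined with the vanishing maximal conditional variance and the bound $\tilde V_n\le\sigma^2+1$, this yields $\prod_k\phi_{n,k}\xrightarrow{p}e^{-\theta^2\sigma^2/2}$, with $|\prod_k\phi_{n,k}|$ bounded away from $0$ for large $n$ (after a further harmless truncation on a set of small probability). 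The key identity is
\begin{equation*}
\mathbb{E}\Bigl[e^{i\theta\sum_k\tilde\xi_{n,k}}\Big/\prod_{k}\phi_{n,k}\Bigr]=1,
\end{equation*}
obtained by conditioning successively on $\mathcal{F}_{n-1},\dots,\mathcal{F}_0$. The quotient is uniformly bounded, and the denominator converges in probability to a deterministic constant, so dominated convergence gives $\mathbb{E}e^{i\theta\sum_k\tilde\xi_{n,k}}\to e^{-\theta^2\sigma^2/2}$.

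I expect the main obstacle to be this last step. The product of conditional characteristic functions is random and sits in a denominator, so one must ensure it stays away from zero uniformly. That is exactly why the stopping-time truncation to $\tilde V_n\le\sigma^2+1$, together with $\max_k\mathbb{E}(\tilde\xi_{n,k}^2|\mathcal{F}_{k-1})\to0$, is needed: it gives $|\log\prod_k\phi_{n,k}|$ a deterministic bound. The remaining expansions are routine Taylor estimates.
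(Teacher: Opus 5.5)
The paper gives no proof of this statement. It is quoted from Duflo (1997) and used as a black box in the proof of Lemma \ref{cltmathscr}, so there is no in-paper argument to compare with. Your proposal is a correct, self-contained proof along standard lines: Cram\'er--Wold, a predictable stopping that forces $\tilde V_n\le\sigma^2+1$, and the identity $\mathbb{E}\bigl[e^{i\theta\sum_k\tilde\xi_{n,k}}/\prod_k\phi_{n,k}\bigr]=1$ combined with bounded convergence. As for what each route buys: the citation is shorter. Your argument is written for a general martingale-difference array and never uses that the normalization is the scalar $a_n^{-1/2}$. Taking $\xi_{n,k}=u^tV_n\Delta M_k$ instead therefore proves Theorem \ref{thm_asymnorm} essentially verbatim as well.

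Two details need to be written out explicitly.

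\textbf{The second truncation must be predictable.} The ``further harmless truncation'' that keeps $\prod_k\phi_{n,k}$ away from $0$ cannot be a restriction to a good event, because that breaks the backward-conditioning identity. Instead, multiply $\tilde\xi_{n,k}$ by the $\mathcal{F}_{k-1}$-measurable indicator of
$\{\max_{j\le k}\mathbb{E}(\xi_{n,j}^2|\mathcal{F}_{j-1})\le\delta\}$, with $\delta\theta^2\le 1$.
\begin{itemize}
\item For the resulting array, $|\phi_{n,k}-1|\le\frac{\theta^2}{2}\mathbb{E}(\tilde\xi_{n,k}^2|\mathcal{F}_{k-1})\le\frac{1}{2}$.
\item Since $\log(1-x)\ge-2x$ on $[0,\frac{1}{2}]$, this gives the deterministic bound $|\prod_{k\le m}\phi_{n,k}|\ge e^{-\theta^2(\sigma^2+1)}$ for every $m$. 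This bound is what makes each quotient in the backward conditioning bounded, and hence makes the identity legitimate.
\item The truncation is active only on $\{\max_k\mathbb{E}(\xi_{n,k}^2|\mathcal{F}_{k-1})>\delta\}$, whose probability tends to $0$ by your bound $\varepsilon^2+\sum_k\mathbb{E}(\xi_{n,k}^2\mathbb{I}_{\{|\xi_{n,k}|\ge\varepsilon\}}|\mathcal{F}_{k-1})$.
\item The dependence of the truncation on $\theta$ is harmless, because $|\mathbb{E}e^{i\theta S}-\mathbb{E}e^{i\theta S'}|\le 2\mathbb{P}(S\ne S')$.
\end{itemize}

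\textbf{The initial term.} Your $\xi_{n,k}$ sum to $a_n^{-1/2}u^t(M_n-M_0)$, not $a_n^{-1/2}u^tM_n$. Add the trivial remark that $a_n^{-1/2}M_0\to 0$ a.s. and conclude by Slutsky.
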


The next result is a simplified version of Theorem 1 of Touati (1991) (see Bercu and Laulin (2021)).
\begin{theorem}\label{thm_asymnorm}
Let $\{M_n, \mathcal{F}_n\}_{n\ge1}$ be a locally square-integrable martingale in
$\mathbb{R}^{d}$ with predictable
quadratic variation $\{\langle M \rangle_n\}_{n\ge1}$.
Also, let $\{V_n\}_{n\ge1}$ be a sequence of non-random square matrices of order $d$ such that $\|V_n\|$ decreases to $0$ as $n\to \infty$. Assume that there exists a symmetric and positive semi-definite matrix $V$ such that
\begin{equation}\label{Touaticondi1}
V_n \langle M \rangle_n V_n^t
\xrightarrow{p} V.
\end{equation}
Moreover, assume that Lindeberg's condition is satisfied, that is, for all $\varepsilon>0$,
\begin{equation}\label{Lindecondi}
\sum_{k=1}^{n}
\mathbb{E}(
\|V_n \Delta M_k\|^{2}
\mathbb{I}_{\{\|V_n\Delta M_k\|>\varepsilon\}}| \mathcal{F}_{k-1})
\xrightarrow{p} 0,
\end{equation}
where $\Delta M_n=M_n-M_{n-1}$, $M_0=0$. Then, $V_n M_n
\xrightarrow{d}
\mathcal{N}(0,V)$.
\end{theorem}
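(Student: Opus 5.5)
This is a simplified form of Touati's Theorem 1, so I would derive it from the multivariate martingale CLT, Theorem \ref{CLT}, through the Cram\'er--Wold device and a triangular-array version of that result. Fix $n$ and, for $1\le k\le n$, set $\xi_{n,k}=V_n\Delta M_k$. Because $V_n$ is deterministic, $\{\xi_{n,k},\mathcal{F}_k\}_{1\le k\le n}$ is a martingale difference array. Its partial sum is $V_nM_n$, and its conditional quadratic variation is
\[
\sum_{k=1}^n\mathbb{E}(\xi_{n,k}\xi_{n,k}^t\mid\mathcal{F}_{k-1})=V_n\langle M\rangle_nV_n^t .
\]
Let $u\in\mathbb{R}^d$ be fixed and put $\eta_{n,k}=u^t\xi_{n,k}$. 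It is enough to show $\sum_{k\le n}\eta_{n,k}\xrightarrow{d}\mathcal{N}(0,u^tVu)$ for every $u$. The Cram\'er--Wold device then gives $V_nM_n\xrightarrow{d}\mathcal{N}(0,V)$.

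\textbf{Step 1 (variance condition).} By \eqref{Touaticondi1}, $\sum_k\mathbb{E}(\eta_{n,k}^2\mid\mathcal{F}_{k-1})=u^tV_n\langle M\rangle_nV_n^tu\xrightarrow{p}u^tVu$.

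\textbf{Step 2 (Lindeberg condition).} Since $|\eta_{n,k}|\le\|u\|\|\xi_{n,k}\|$, the event $\{|\eta_{n,k}|>\varepsilon\}$ is contained in $\{\|\xi_{n,k}\|>\varepsilon/\|u\|\}$. Applying \eqref{Lindecondi} with $\varepsilon/\|u\|$ in place of $\varepsilon$ gives
\[
\sum_k\mathbb{E}(\eta_{n,k}^2\mathbb{I}_{\{|\eta_{n,k}|>\varepsilon\}}\mid\mathcal{F}_{k-1})\xrightarrow{p}0 .
\]
If $u=0$ there is nothing to prove.

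\textbf{Step 3 (apply a martingale CLT).} With these two conditions, the scalar CLT for martingale difference arrays with a deterministic limiting variance (Hall--Heyde, Corollary 3.1, or Duflo's array version of Theorem \ref{CLT}) yields the claim for each $u$. The case $u^tVu=0$ is immediate: then $\mathbb{E}\big(\big(\sum_k\eta_{n,k}\big)^2\big)$ need not vanish, but a truncation argument together with Step 1 gives convergence in probability to $0$.

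\textbf{Main obstacle.} Theorem \ref{CLT} is stated for a single martingale normalised by scalars $a_n^{-1/2}$, whereas here the normalisation $V_n$ is a matrix that changes with $n$, so the summands form a genuine triangular array. Two issues arise.

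First, we need a nesting of filtrations. In our array, row $n$ uses $\mathcal{F}_k$ for every $n$, so the $\sigma$-fields are nested. Since the limit $V$ is deterministic, no stable convergence is needed and the nesting condition holds trivially.

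Second, we must justify the array form of the CLT. The hypothesis that $\|V_n\|$ decreases to $0$ guarantees uniform asymptotic negligibility of the $\xi_{n,k}$. It is also what Touati uses to pass from his general statement to this one. I would cite Touati (1991), Theorem 1, directly for this passage, and use the argument above to verify that its hypotheses reduce to \eqref{Touaticondi1} and \eqref{Lindecondi}.
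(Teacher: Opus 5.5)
The paper gives no proof of this statement. It quotes it as a simplified form of Touati (1991), Theorem 1, in the version of Bercu and Laulin (2021), and uses it as a black box.

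Your argument is a genuine self-contained derivation, and it is correct. You view $\xi_{n,k}=V_n\Delta M_k$, $1\le k\le n$, as a martingale difference array for the fixed filtration $\{\mathcal{F}_k\}$. You reduce by Cram\'er--Wold to $u^t\xi_{n,k}$, and you transfer \eqref{Touaticondi1} and \eqref{Lindecondi} to the conditional-variance and conditional Lindeberg conditions. That places you exactly in Hall--Heyde, Corollary 3.1. Nesting is automatic since the $\sigma$-fields do not depend on $n$, and it is unnecessary anyway for a constant limit. Your route buys transparency: it shows the hypothesis that $\|V_n\|$ decreases to $0$ is never used, whereas Touati's more general theorem carries it.

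A few points to tidy up:

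\begin{itemize}
\item Uniform asymptotic negligibility, $\max_{k\le n}\|V_n\Delta M_k\|\xrightarrow{p}0$, does not come from $\|V_n\|\to 0$. The increments may grow with $k$; in the paper's first coordinate they are of order $k^{2a(1-p)-1}$. Negligibility comes from the conditional Lindeberg condition via Lenglart's inequality.
\item The case $u^tVu=0$ needs no separate truncation. Hall--Heyde allows a zero limit. Alternatively, Lenglart gives directly $\mathbb{P}(|\sum_k\eta_{n,k}|>\delta)\le\theta/\delta^2+\mathbb{P}(\sum_k\mathbb{E}(\eta_{n,k}^2|\mathcal{F}_{k-1})>\theta)$ for all $\theta,\delta>0$.
\item Your closing fallback to citing Touati is therefore superfluous.
\item The only point left implicit is that Hall--Heyde assumes square-integrable arrays. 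For a merely locally square-integrable $M$, first stop at the predictable time where the conditional variance exceeds a large level. This is irrelevant for the paper, whose increments are bounded.
\end{itemize}
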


A simplified version of Theorem 2.1 of Chaabane and Maaouia (2000) is as follows (see Bercu and Laulin (2021)):
\begin{theorem}\label{qsl}
Let $\{M_n, \mathcal{F}_n\}_{n\ge1}$ be a locally square-integrable martingale in
$\mathbb{R}^{d}$ with predictable quadratic variation $\{\langle M \rangle_n\}_{n\ge1}$.
Let $\{V_n\}_{n\ge1}$ be a sequence of non-random positive definite diagonal matrices of order $d$ such that its diagonal entries decrease to $0$ at polynomial rates. Assume that conditions \eqref{Touaticondi1} and \eqref{Lindecondi} hold a.s. Moreover, suppose that there exists $\eta\in(1,2]$ such that
\begin{equation}\label{qslcondi3}
\sum_{n=1}^{\infty}\frac{1}{(\log (\det V_n^{-1})^2)^\eta}\mathbb{E}(\|V_n\Delta M_n\|^{2\eta}|\mathcal{F}_{n-1})<\infty\ \ \text{a.s.}
\end{equation}
Then, we have the following quadratic strong law:
\begin{equation}\label{qslfin}
\lim_{n\to \infty}\frac{1}{\log (\det V_n^{-1})^2}\sum_{k=1}^{n}\Big(\frac{(\det V_k)^2-(\det V_{k+1})^2}{(\det V_k)^2}\Big)V_kM_kM_k^tV_k^t=V\ \ \text{a.s.}	
\end{equation}
\end{theorem}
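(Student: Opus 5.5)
The plan is to prove the quadratic strong law \eqref{qslfin} by an Abel-summation (telescoping) argument applied to the quadratic form $V_kM_kM_k^tV_k^t$. The error terms are then split into a martingale part and a conditional-variance part. Throughout, set $s_n=(\det V_n)^{-2}$, which increases to $\infty$ at a polynomial rate, and $\ell_n=\log s_n$. The weights in \eqref{qslfin} are then $w_k=1-s_k/s_{k+1}$, and since $s_n$ grows polynomially, $\sum_{k\le n}w_k\sim \ell_n$. Because the $V_n$ are diagonal with entries decaying polynomially, they commute, and $V_{k+1}V_k^{-1}=I-D_k$ with $D_k$ diagonal and $\|D_k\|=O(1/k)$. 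This is the structural fact that makes the recursion manageable.

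\textbf{Step 1 (recursion).} Write $M_{k+1}=M_k+\Delta M_{k+1}$ and expand
\[
V_{k+1}M_{k+1}M_{k+1}^tV_{k+1}^t=(I-D_k)V_kM_kM_k^tV_k^t(I-D_k)+V_{k+1}\Delta M_{k+1}\Delta M_{k+1}^tV_{k+1}^t+\xi_{k+1}+\xi_{k+1}^t,
\]
where $\xi_{k+1}=V_{k+1}M_k\Delta M_{k+1}^tV_{k+1}^t$ is a martingale increment. Summing over $k\le n$ rearranges the identity into the form
\[
\sum_{k\le n}(D_kQ_k+Q_kD_k)\approx \sum_{k\le n}V_{k+1}\Delta M_{k+1}\Delta M_{k+1}^tV_{k+1}^t+\text{martingale}-Q_{n+1},
\]
with $Q_k=V_kM_kM_k^tV_k^t$. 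The left-hand side is a weighted sum of the $Q_k$ with weights of order $1/k$, comparable to $w_k$. For a generic diagonal $V_n$, recovering exactly the scalar weights $w_k$ requires working entrywise: entry $(i,j)$ of $Q_k$ carries the weight $d_k^{(i)}+d_k^{(j)}$, and one relates these to $w_k=\sum_i d_k^{(i)}+O(k^{-2})$. This is also where the polynomial-rate assumption is used. The term $D_kQ_kD_k$ is $O(k^{-2}\|Q_k\|)$ and is negligible once we know $\|Q_k\|=O(\log s_k)$ a.s., which follows from Theorem \ref{thm_LLN2}.

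\textbf{Step 2 (the three terms after dividing by $\ell_n$).}
\begin{itemize}
\item[(a)] Since $Q_{n+1}=O(\ell_n^{1+\gamma})$ is too crude, first establish $Q_{n}=o(\ell_n)$ a.s. using the rate in Theorem \ref{thm_LLN2} applied componentwise.
\item[(b)] The martingale $\sum_k\xi_{k+1}$ has increasing process of order $\sum_k k^{-1}\|Q_k\|$. Theorem \ref{thm_LLN2}(ii) (or Theorem \ref{thm_LLN1}(ii)) then gives that it is $o(\ell_n)$.
\item[(c)] For the squared increments, replace $V_{k+1}\Delta M_{k+1}\Delta M_{k+1}^tV_{k+1}^t$ by its conditional expectation. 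The difference is a martingale whose normalised increments, divided by $\ell_k$, have summable $\eta$-th conditional moments by \eqref{qslcondi3}. Chow's strong law then makes it $o(\ell_n)$.
\end{itemize}
What remains in (c) is $\sum_k V_{k+1}\Delta\langle M\rangle_{k+1}V_{k+1}^t$. By \eqref{Touaticondi1}, holding a.s., and a Toeplitz/Kronecker argument using $V_{k+1}V_k^{-1}\to I$, this sum is asymptotically $\sum_k w_k V+o(\ell_n)$. Solving the resulting linear relation entrywise yields \eqref{qslfin}.

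I expect the main obstacle to be (a) together with the entrywise bookkeeping in Step 1: controlling $Q_n/\ell_n\to0$ a.s. rather than merely in probability. I would first try the LLN rate of Theorem \ref{thm_LLN2} applied to each coordinate of $V_nM_n$. If that is insufficient, a fallback is to prove the law along a geometric subsequence $n_j=\lfloor \theta^j\rfloor$ and interpolate, using that consecutive weights vary slowly. The Lindeberg condition \eqref{Lindecondi} holding a.s. is used only to guarantee that individual normalised increments are asymptotically negligible in this interpolation.
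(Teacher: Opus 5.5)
The paper does not prove this theorem. It is quoted from Chaabane and Maaouia (2000) via Bercu and Laulin (2021), so your plan has to stand on its own. Your overall Abel-summation architecture, and step (c) (Chow's theorem fed by \eqref{qslcondi3}), are sound. But there is a genuine gap exactly where you suspected it.

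\textbf{Step (a).} This step must supply $\|V_nM_n\|^2=o(\ell_n)$ a.s., which is a bound of iterated-logarithm type, and the tool you name cannot give it.
\begin{itemize}
\item Apply Theorem \ref{thm_LLN2} to a coordinate, writing $\Delta M^{(i)}_k=(\Delta\langle M^{(i)}\rangle_k)^{1/2}e_k$ with $\mathbb{E}(e_k^2\mid\mathcal{F}_{k-1})\le 1$. It returns exactly the $O(\ell_n^{1+\gamma})$ bound you had just rejected.
\item The Kronecker argument behind that rate cannot be pushed to $o(\ell_n)$. It would need $\sum_k\Delta\langle M^{(i)}\rangle_k/(\langle M^{(i)}\rangle_k\log\langle M^{(i)}\rangle_k)<\infty$, and this sum diverges.
\item Stout's law (Theorem \ref{stoutthm2}) would give $O(\log\log n)$. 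However, it needs normalised increments of order $o((\log\log n)^{-1/2})$, and nothing in the hypotheses provides that. Condition \eqref{qslcondi3}, combined with the conditional Borel--Cantelli lemma, only controls $\|V_n\Delta M_n\|^2$ at the scale $o(\ell_n)$.
\item The geometric-subsequence fallback does not address the issue, because the boundary term $Q_{n_j+1}/\ell_{n_j}$ is present along any subsequence.
\end{itemize}

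\textbf{Step (b).} This step is not justified, and as written it is circular.
\begin{itemize}
\item The increasing process of $\sum_k\xi_{k+1}$ is bounded by $\sum_k\|V_{k+1}M_k\|^2\,\mathbb{E}(\|V_{k+1}\Delta M_{k+1}\|^2\mid\mathcal{F}_k)$. Calling it of order $\sum_kk^{-1}\|Q_k\|$ assumes the conditional variances are $O(1/k)$ pointwise. Condition \eqref{Touaticondi1} controls only their partial sums.
\item Even granting that, Theorem \ref{thm_LLN1}(ii) gives $o\bigl(\sum_kk^{-1}\|Q_k\|\bigr)$. This is $o(\ell_n)$ only once you know $\sum_kk^{-1}\|Q_k\|=O(\ell_n)$, which is essentially the upper half of \eqref{qslfin}.
\item With only $\|Q_k\|=O(\ell_k^{1+\gamma})$, the cross term is merely $O(\ell_n^{1+\gamma/2})$ up to logarithmic factors.
\end{itemize}

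\textbf{When the gaps can be closed.} Suppose $\|V_n\Delta M_n\|^2=O(1/n)$ uniformly, as in the paper's own application, where the $\varepsilon_k$ are bounded. Then both holes close:
\begin{itemize}
\item Set $R_n=\sum_{k\le n}\mathrm{tr}(D_kQ_k+Q_kD_k)$. Taking traces and discarding $-\mathrm{tr}\,Q_{n+1}\le 0$ gives the self-bounding inequality $R_n\le O(\ell_n)+o(R_n)$. Hence $R_n=O(\ell_n)$, and the cross term is $o(\ell_n)$.
\item Then either Stout's law applied coordinatewise, or a Tauberian argument on $R_n+\mathrm{tr}\,Q_{n+1}$, yields $\mathrm{tr}\,Q_{n+1}=o(\ell_n)$.
\end{itemize}
Under the hypotheses as stated, with no pointwise control of increments or conditional variances, an additional idea is required.

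\textbf{Bookkeeping in (c).} The conditional-variance sum equals $\sum_k(D_kV+VD_k)+o(\ell_n)$. Its $(i,j)$ entry carries the same weight $d_k^{(i)}+d_k^{(j)}$ as your left-hand side, and that is why the limit comes out as $V$. Writing it as $\sum_kw_kV$, as you do, gives the wrong limit. It is off by a factor equal to the dimension $d$ even when all the rates are equal.
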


\section{ERW with delayed amnesia}\label{interm}
In the standard elephant random walk, the memory mechanism remains unchanged throughout the evolution of the process with each past step being equally likely to be recalled at every time point. In many practical scenarios, the system initially possesses complete access to its history. However, after a certain stage the retrieval mechanism becomes selective or weakened, reducing the influence of remote past events. To model such situations, we introduce a modified elephant random walk, namely, the ERW with delayed amnesia, in which the memory mechanism undergoes a deterministic transition after a fixed time. That is, the walker follows the standard uniform memory rule upto a fixed time step and subsequently switches to an amnesic memory regime governed by a certain probability distribution. 

We define the ERW with delayed amnesia $\{S_n\}_{n\geq 1}$ by $S_n\coloneqq X_1+X_2+\dots+X_n$, $n\geq1$,
where the increment variables $\{X_n\}_{n\geq 1}$ have the following distribution:
For $n=1$, we have
\begin{equation*}
X_1=\begin{cases}
	1\ \ \text{with probability}\ q,\\
	-1\ \ \text{with probability}\ 1-q,
\end{cases}
\end{equation*}
where $0\le q\le 1$.

Let $a\in\mathbb{N}$ be fixed. For $1\leq n\leq a-1$, the process evolves according to the standard ERW mechanism. That is,
\begin{equation*}
X_{n+1}=\begin{cases}
		X_{U(n)}\ \ \text{with probability}\ p,\\
		-X_{U(n)}\ \ \text{with probability}\ 1-p,
	\end{cases}
\end{equation*}
where $0\le p\le1$ and $U(n)\sim \text{Uniform}\{1,2,\dots,n\}$. And, for $n\ge a$, it evolves as follows:
\begin{equation}\label{Xn+1}
X_{n+1}=\begin{cases}
		X_{\beta(n)}\ \ \text{with probability}\ p,\\
		-X_{\beta(n)}\ \ \text{with probability}\ 1-p,
	\end{cases}
\end{equation}
where 
\begin{equation}\label{betadef}
\mathbb{P}\{\beta(n)=k\}=\frac{a\Gamma(k)\Gamma(n-a+1)}{\Gamma(k-a+1)\Gamma(n+1)},\ \  k=a, a+1, \dots, n.
\end{equation}
Here, we assume that the randomly chosen steps in two memory mechanisms $U(n)$ and $\beta(n)$ are independent, and they are independent of $\{X_1,X_2,\dots$, $X_n\}$.
\begin{remark}
The parameter $a$ is the switching time between the two memory mechanisms. On substituting $a=1$ in \eqref{betadef}, we get the uniform distribution on $\{1,2,\dots,n\}$, that is, $\beta(n)\overset{d}{=}U(n)$. So, in this case, the ERW with delayed amnesia reduces to the standard ERW. 
\end{remark}

\begin{remark}
For fixed $k$, we have $\mathbb{P}\{\beta(n)=k\}\sim a\Gamma(k)/(\Gamma(k-a+1))n^{-a}$ as $n\to\infty$. So, the probability of recalling a fixed step decays polynomially as $a\Gamma(k)/(\Gamma(k-a+1))n^{-a}$.  Hence, increasing the amnesia parameter $a$ accelerates the loss of influence of remote memories. Also, $\Gamma(k)/ \Gamma(k-a+1)\sim k^{a-1}$ as $k\to\infty$. So, the memory kernel assigns increasingly larger weights to recent times.
\end{remark}

Let $\mathcal{F}_n=\sigma(X_1,X_2,\dots,X_n)$ be the $\sigma$-algebra generated by the history till $n$-th step. For $1\leq n\leq a-1$, from Eq. (7) of Sch\"utz and Trimper (2004), we have
\begin{equation*}
\mathbb{E}(X_{n+1}|\mathcal{F}_n)=\frac{2p-1}{n}\sum_{k=1}^{n}X_k.
\end{equation*}
And, for $n\ge a$, from \eqref{Xn+1}, we have the following for all $G\in\mathcal{F}_n$:
\begin{align}
\int_{G}\mathbb{E}(X_{n+1}|\mathcal{F}_n)\,\mathrm{d}\mathbb{P}
&=(2p-1)\mathbb{E}(X_{\beta(n)}\mathbb{I}_G)\nonumber\\
&=(2p-1)\sum_{k=a}^{n}\mathbb{E}(X_{\beta(n)}\mathbb{I}_G|\beta(n)=k)\mathbb{P}\{\beta(n)=k\}\nonumber\\
&=\frac{a(2p-1)\Gamma(n-a+1)}{\Gamma(n+1)}\int_{G}\sum_{k=a}^{n}\frac{\Gamma(k)}{\Gamma(k-a+1)}X_k\,\mathrm{d}\mathbb{P},\label{condiXn}
\end{align}
where the last step follows on using \eqref{betadef} and the independence of $\beta(n)$ with $\{X_1,X_2,\dots$, $X_n\}$. By applying the Radon-Nikodym theorem to \eqref{condiXn}, we get
\begin{equation}\label{XnYn}
\mathbb{E}(X_{n+1}|\mathcal{F}_n)=\frac{a(2p-1)\Gamma(n-a+1)}{\Gamma(n+1)}Y_n,
\end{equation}
where
\begin{equation}\label{Yn}
Y_n=\sum_{k=a}^{n}\frac{\Gamma(k)}{\Gamma(k-a+1)}X_k.
\end{equation}
Therefore,
\begin{equation}\label{condiYn}
\mathbb{E}(Y_{n+1}|\mathcal{F}_n)=\Big(1+\frac{a(2p-1)}{n-a+1}\Big)Y_n.
\end{equation}
Let $b_a=1$ and
\begin{equation}\label{b_n}
b_n=\prod_{k=a}^{n-1}\Big(1+\frac{a(2p-1)}{k-a+1}\Big)^{-1},
\end{equation}
for all $n>a$. Also, let us define
\begin{equation}\label{Mn}
M_n\coloneqq b_nY_n,\ \  n\ge a.
\end{equation}
Then, by using \eqref{condiYn}, it follows that $\{M_n,\mathcal{F}_n\}_{n\ge a}$ is a martingale. 

To study the limiting behaviour of ERW with delayed amnesia, we consider another sequence of random variables $\{N_n\}_{n\ge a}$ defined as follows:
\begin{equation}\label{Nn}
	N_n\coloneqq S_n+c_nY_n,
\end{equation}
where 
\begin{equation}\label{c_ndef}
	c_n=\frac{a(2p-1)\Gamma(n-a+1)}{(2a(1-p)-1)\Gamma(n)}
\end{equation} 
such that $2a(1-p)\ne 1$. By using \eqref{XnYn} and \eqref{condiYn}, we have
\begin{equation*}
	\mathbb{E}(N_{n+1}|\mathcal{F}_n)
	=S_n+c_nY_n=N_n.
\end{equation*}
Thus, $\{N_n,\mathcal{F}_n\}_{n\ge a}$ is a martingale. 

Now, let us consider the following martingale differences:
\begin{equation}\label{DeltaMn}
\Delta M_n=M_n-M_{n-1}, \ n\ge a,
\end{equation}
with $M_{a-1}=0$. By using \eqref{Yn} and \eqref{b_n}, we get
\begin{align}
\Delta M_{n+1}&=b_{n+1}\Big(Y_{n+1}-\frac{b_n}{b_{n+1}}Y_n\Big)\nonumber\\
&=b_{n+1}\Big(\frac{\Gamma(n+1)}{\Gamma(n-a+2)}X_{n+1}-\frac{a(2p-1)}{n-a+1}Y_n\Big)\nonumber\\
&=\frac{a(2p-1)}{2a(1-p)-1}b_{n+1}c_{n+1}^{-1}\varepsilon_{n+1},\label{DelMnEps}
\end{align}
where
\begin{equation}\label{epsin1}
\varepsilon_{n+1}=X_{n+1}-\frac{2a(1-p)-1}{n-a+1}c_{n+1}Y_n.
\end{equation}
Let $\varepsilon_a=X_a$. Then, from \eqref{DeltaMn} and \eqref{DelMnEps}, we have
\begin{equation}\label{Mn_additive}
M_n=\frac{a(2p-1)}{2a(1-p)-1}\sum_{k=a}^{n}b_kc_k^{-1}\varepsilon_k.
\end{equation}
For $n\ge a$, by using \eqref{XnYn} and \eqref{c_ndef} in \eqref{epsin1}, we get
\begin{equation}\label{epsPow1}
	\mathbb{E}(\varepsilon_{n+1}|\mathcal{F}_n)=0, 
\end{equation}	
and
\begin{equation}\label{epsilon2}
	\mathbb{E}(\varepsilon_{n+1}^2|\mathcal{F}_n)=1-\Big(\frac{2a(1-p)-1}{n-a+1}c_{n+1}Y_n\Big)^2.
\end{equation}	
So,
\begin{equation}\label{epsPow2}
	\sup_{n\ge a}\mathbb{E}(\varepsilon_{n+1}^2|\mathcal{F}_n)\le 1.
\end{equation}

The next result is obtained by using the following asymptotic relations:
\begin{equation}\label{asymb_n}
b_n\sim \Gamma(a(2p-1)+1)n^{-a(2p-1)} \ \ \text{as} \ n\to\infty
\end{equation}
and
\begin{equation}\label{asymc_n}
c_n\sim \frac{a(2p-1)}{2a(1-p)-1}n^{-a+1} \ \ \text{as} \ n\to\infty.
\end{equation}
\begin{proposition}\label{asymps_n}
For $n\ge a$, let
\begin{equation*}
s_n=\Big(\frac{a(2p-1)}{2a(1-p)-1}\Big)^2\sum_{k=a}^{n}(b_kc_k^{-1})^2.
\end{equation*}
Then,\vspace{.1cm}\\
\noindent{(i)} for $0\le p< (4a-1)/4a$, we have
\begin{equation*}
\lim_{n\to\infty}\frac{s_n}{n^{4a(1-p)-1}}=\frac{(\Gamma(a(2p-1)+1))^2}{4a(1-p)-1},
\end{equation*} 
\noindent{(ii)} for $p=(4a-1)/4a$, we have
\begin{equation*}
\lim_{n\to\infty}\frac{s_n}{\log n}=\Big(\Gamma\Big(a+\frac{1}{2}\Big)\Big)^2,\ \ \text{and}
\end{equation*}
\noindent{(iii)} for $(4a-1)/4a<p\le 1$, we have $\lim_{n\to\infty}s_n=l$,
where $l$ is some non zero constant.
\end{proposition}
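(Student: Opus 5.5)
The plan is to find the exact polynomial order of the summand and then compare the partial sums with a power integral or a harmonic sum. By \eqref{asymb_n} and \eqref{asymc_n},
\begin{equation*}
\Big(\frac{a(2p-1)}{2a(1-p)-1}\Big)^2 (b_kc_k^{-1})^2 \sim (\Gamma(a(2p-1)+1))^2\, k^{-2a(2p-1)+2(a-1)} \quad \text{as } k\to\infty.
\end{equation*}
The prefactor $\big(a(2p-1)/(2a(1-p)-1)\big)^2$ cancels exactly against the square of the constant in \eqref{asymc_n}. Since $-2a(2p-1)+2a-2 = 4a(1-p)-2$, the summand is asymptotically $(\Gamma(a(2p-1)+1))^2\, k^{\alpha-1}$ with $\alpha := 4a(1-p)-1$.

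The sign of $\alpha$ decides the regime, and $\alpha = 0$ exactly when $p = (4a-1)/4a$. We use the elementary fact that if $u_k\sim C k^{\alpha-1}$ with $C>0$, then the partial sums behave as follows:
\begin{itemize}
\item for $\alpha>0$, $\sum_{k\le n}u_k\sim Cn^{\alpha}/\alpha$, by Ces\`aro/Stolz comparison with $\sum_{k\le n} k^{\alpha-1}\sim n^\alpha/\alpha$;
\item for $\alpha=0$, $\sum_{k\le n}u_k\sim C\log n$, from the harmonic sum;
\item for $\alpha<0$, the series converges.
\end{itemize}
These give (i), (ii) and (iii) respectively. In case (ii), $a(2p-1)=a-1/2$, so the constant is $(\Gamma(a+1/2))^2$, as claimed.

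For (iii), the limit $l$ is the sum of a series of nonnegative terms. It is nonzero because the first term, $k=a$, equals
\begin{equation*}
\Big(\frac{a(2p-1)}{2a(1-p)-1}\Big)^2 c_a^{-2},
\end{equation*}
using $b_a=1$. This is positive whenever $p\neq 1/2$. If $p=1/2$, then $c_n=0$ and the expression must be read via \eqref{Mn_additive} as the product $a(2p-1)c_k^{-1}$. This product is well defined because the factor $a(2p-1)$ cancels inside $c_k^{-1}$, leaving a positive term. In any case $p=1/2$ lies in regime (i), since $(4a-1)/4a \ge 3/4$.

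The main obstacle is justifying \eqref{asymb_n} and \eqref{asymc_n}, together with the degenerate-parameter issues. For $b_n$, write
\begin{equation*}
b_n=\prod_{j=1}^{n-a} \frac{j}{j+a(2p-1)}=\frac{\Gamma(n-a+1)\Gamma(a(2p-1)+1)}{\Gamma(n-a+1+a(2p-1))}.
\end{equation*}
This is valid when $a(2p-1)+1>0$. If $a(2p-1)\le -1$, some factor may vanish, but that case again lies in regime (i) and only finitely many initial terms change. Wendel's limit $\Gamma(x+s)/\Gamma(x)\sim x^{s}$ then gives \eqref{asymb_n}. The same limit gives $\Gamma(n-a+1)/\Gamma(n)\sim n^{1-a}$, hence \eqref{asymc_n}.

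With these in hand, the rest is a routine Stolz--Ces\`aro argument. The summand is positive for all large $k$, so the comparisons apply directly.
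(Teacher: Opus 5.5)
Your proposal is correct and takes the same route as the paper, which derives the proposition directly from \eqref{asymb_n} and \eqref{asymc_n}. The summand is asymptotically $(\Gamma(a(2p-1)+1))^2k^{4a(1-p)-2}$, and the three regimes follow by comparison with $\sum k^{\alpha-1}$. Your Gamma-function derivation of the two asymptotics, and your observation that the $k=a$ term (which equals $\Gamma(a)^2$) forces $l>0$, fill in details the paper leaves implicit.

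Your side remark on degenerate $p$ needs correcting, in two parts. First, the identity $b_n=\Gamma(n-a+1)\Gamma(a(2p-1)+1)/\Gamma(n-a+1+a(2p-1))$ holds whenever $a(2p-1)\notin\{-1,-2,\dots\}$, by $\Gamma(x+m+1)=\Gamma(x+1)\prod_{j=1}^{m}(x+j)$. So no patching is needed for $a(2p-1)<-1$. ``Only finitely many initial terms change'' would not be a valid justification anyway: altering finitely many factors rescales every later $b_n$, and hence the constant in (i). Second, when $a(2p-1)\in\{-1,\dots,-a\}$ (e.g.\ $p=0$), $b_n$ is undefined for all $n\ge a-a(2p-1)$ and $\Gamma(a(2p-1)+1)$ is a pole. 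The statement is therefore vacuous at those values rather than repairable.
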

The limiting behaviour of $\{M_n\}_{n\ge a}$ is determined by that of $\{s_n\}_{n\ge a}$. So, we consider three regimes based on the values of $p$ as given in Proposition \ref{asymps_n}. The cases $0\le p< (4a-1)/4a$, $p=(4a-1)/4a$ and $(4a-1)/4a<p\le 1$ correspond to the diffusive, critical and superdiffusive regimes, respectively.

\begin{remark}
Note that the critical value $p=1-1/4a$ approaches $1$ as $a\to \infty$. Thus,  increasing the amnesia parameter $a$ enlarges the range of $p$ for which the walk remains diffusive, as larger values of $p$ are required to reach the critical and superdiffusive regimes. This is consistent with the fact that the recall probability of a fixed past step decays as $a\Gamma(k)/(\Gamma(k-a+1))n^{-a}$, thereby weakening the influence of remote steps. However, for any positive integer $a$, the model undergoes a phase transition between the diffusive and superdiffusive regimes.
\end{remark}

Let $\Delta N_a=(a-1)X_a/(2a(1-p)-1)$ and $\Delta N_{n+1}\coloneqq N_{n+1}-N_n$, for all $n\ge a$. Then, from \eqref{Nn}, we have
\begin{equation}\label{DelNn}
	\Delta N_n=\frac{a-1}{2a(1-p)-1}\varepsilon_n.
\end{equation}
Also, let $N_{a-1}=S_{a-1}$. Then, 
\begin{equation}
	N_n=S_{a-1}+\frac{a-1}{2a(1-p)-1}\sum_{k=a}^{n}\varepsilon_k.
\end{equation}

The next result follows on using \eqref{epsPow1} and \eqref{epsPow2} in Theorem \ref{thm_LLN2}. 
\begin{lemma}\label{Nn_O}
Let $\gamma> 0$ and $\{N_n\}_{n\ge a}$ be as defined in \eqref{Nn}. Then,
\begin{equation*}
(N_n-S_{a-1})^2=O(n(\log n)^{1+\gamma})\ \ \text{a.s.}
\end{equation*}
\end{lemma}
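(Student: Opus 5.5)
We apply Theorem \ref{thm_LLN2} in dimension $d=1$ with the deterministic weights $\Phi_{k-1}=1$ for $k\ge a$. By \eqref{DelNn}, the centred quantity
\begin{equation*}
N_n-S_{a-1}=\frac{a-1}{2a(1-p)-1}\sum_{k=a}^{n}\varepsilon_k
\end{equation*}
is a constant multiple of the martingale $\sum_{k=a}^{n}\langle \Phi_{k-1},\varepsilon_k\rangle$. The constant is finite because $2a(1-p)\ne 1$. So it suffices to show $\big(\sum_{k=a}^n\varepsilon_k\big)^2=O(n(\log n)^{1+\gamma})$ a.s.

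First we check the hypotheses of Theorem \ref{thm_LLN2}. Each $\varepsilon_k$ is adapted to $\mathcal{F}_k$ and square integrable. Indeed, $|X_k|=1$, and for $k>a$ the correction term in \eqref{epsin1} is $\frac{2a(1-p)-1}{k-a}c_kY_{k-1}$. By \eqref{XnYn}, this equals $(2p-1)\mathbb{E}(X_k\mid\mathcal{F}_{k-1})\cdot\frac{1}{2p-1}$ up to the identity already used to derive \eqref{epsilon2}, so it is bounded by $1$ in absolute value. For $k=a$ we simply have $\varepsilon_a=X_a$. The conditional mean zero property is \eqref{epsPow1}, and the uniform bound $\sup\mathbb{E}(\varepsilon_{k+1}^2\mid\mathcal{F}_k)\le 1$ is \eqref{epsPow2}. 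Both are available for $k\ge a$. The first term $\varepsilon_a=X_a$ is handled by starting the filtration at index $a-1$: we need $\mathbb{E}(X_a\mid\mathcal{F}_{a-1})$ to vanish, or else we absorb $\varepsilon_a$ as a bounded additive constant. This causes no harm, since a bounded shift does not affect an $O(n(\log n)^{1+\gamma})$ bound on the square.

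Next, with $\Phi_k\equiv 1$ we get $s_{n}=\sum_{k=a-1}^{n}1=n-a+2\to\infty$. So we are on the event $\{s_\infty=\infty\}$ with probability one, and the rate statement of Theorem \ref{thm_LLN2} gives, for every $\gamma>0$,
\begin{equation*}
\Big(\sum_{k=a+1}^{n}\varepsilon_k\Big)^2=O\big((\log s_{n-1})^{1+\gamma}s_{n-1}\big)=O\big(n(\log n)^{1+\gamma}\big)\ \ \text{a.s.}
\end{equation*}
We then add back $\varepsilon_a$, which is bounded, using $(x+y)^2\le 2x^2+2y^2$. Finally, multiplying by the constant $\big((a-1)/(2a(1-p)-1)\big)^2$ gives the claim.

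The only delicate point is the bookkeeping at the initial index $a$. Specifically, we must make sure the martingale difference structure (zero conditional mean and bounded conditional variance) holds uniformly from the starting index on, or else isolate the finitely many initial terms as an a.s. bounded remainder. Everything else is a direct application of Theorem \ref{thm_LLN2}.
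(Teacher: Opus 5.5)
Your proposal is correct and takes the same route as the paper, which simply applies Theorem \ref{thm_LLN2} with $\Phi_k\equiv 1$ to the differences $\varepsilon_k$, using \eqref{epsPow1} and \eqref{epsPow2}, so that $s_n\sim n$. Isolating $\varepsilon_a=X_a$, which in general is not centred given $\mathcal{F}_{a-1}$, as a bounded remainder is care the paper leaves implicit; also, the correction term in \eqref{epsin1} is exactly $\mathbb{E}(X_k\mid\mathcal{F}_{k-1})$, so $|\varepsilon_k|\le 2$ follows directly, with no division by $2p-1$ needed.
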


Let us consider the following predictable quadratic variations:
\begin{equation}\label{predMn}
\langle M\rangle_n=\sum_{k=a}^{n}\mathbb{E}((\Delta M_k)^2|\mathcal{F}_{k-1})
\end{equation} 
and
\begin{equation}\label{predNn}
\langle N\rangle_n=\sum_{k=a}^{n}\mathbb{E}((\Delta N_k)^2|\mathcal{F}_{k-1}),
\end{equation}
corresponding to $\{M_n\}_{n\ge a}$ and $\{N_n\}_{n\ge a}$, respectively, such that $\langle M\rangle_{a-1}=0$ and $\langle N\rangle_{a-1}=0$.

The following limiting results hold for the predictable quadratic variation of $\{M_n\}_{n\ge a}$:
\begin{lemma}
For $0\le p<(4a-1)/4a$, we have
\begin{equation}\label{angMndiff}
\lim_{n\to \infty}\frac{\langle M\rangle_n}{n^{4a(1-p)-1}}=\frac{(\Gamma(a(2p-1)+1))^2}{4a(1-p)-1}\ \ \text{a.s.}
\end{equation}
and for $p=(4a-1)/4a$, we have
\begin{equation}\label{angMncrit}
\lim_{n\to \infty}\frac{\langle M\rangle_n}{\log n}=\Big(\Gamma\Big(a+\frac{1}{2}\Big)\Big)^2\ \ \text{a.s.}
\end{equation}
\end{lemma}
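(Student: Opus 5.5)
We write $\kappa=a(2p-1)/(2a(1-p)-1)$, so that \eqref{Mn_additive} reads $\Delta M_k=\kappa\, b_kc_k^{-1}\varepsilon_k$ for $k\ge a$. We first express $\langle M\rangle_n$ through $s_n$ and a remainder term. Using \eqref{epsilon2},
\begin{equation*}
\langle M\rangle_n=\kappa^2\sum_{k=a}^{n}(b_kc_k^{-1})^2\,\mathbb{E}(\varepsilon_k^2|\mathcal{F}_{k-1})
=s_n-\kappa^2\sum_{k=a+1}^{n}(b_kc_k^{-1})^2\Big(\frac{2a(1-p)-1}{k-a}c_kY_{k-1}\Big)^2+O(1).
\end{equation*}
The $k=a$ term is a constant, since $\varepsilon_a=X_a$. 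In the remainder the factor $c_k$ cancels, and the $k$-th term becomes $(a(2p-1))^2 b_k^2 Y_{k-1}^2/(k-a)^2$. By Proposition \ref{asymps_n}, $s_n/n^{4a(1-p)-1}$ and $s_n/\log n$ have the announced limits. The whole proof therefore reduces to showing
\begin{equation*}
R_n:=\sum_{k=a+1}^{n}\frac{b_k^2Y_{k-1}^2}{(k-a)^2}=o(s_n)\quad\text{a.s.}
\end{equation*}

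To bound $Y_{k-1}$ we use the martingale $M_n=b_nY_n$. Theorem \ref{thm_LLN2} applies with $\Phi_{k-1}=\kappa b_kc_k^{-1}$, which is deterministic, and with \eqref{epsPow1} and \eqref{epsPow2}. It gives
\begin{equation*}
M_n^2=O\big(s_{n}(\log s_{n})^{1+\gamma}\big)\quad\text{a.s.}
\end{equation*}
Since $b_k$ is regularly varying by \eqref{asymb_n}, $b_k\asymp b_{k-1}$. Hence
\begin{equation*}
\frac{b_k^2Y_{k-1}^2}{(k-a)^2}=O\Big(\frac{M_{k-1}^2}{k^2}\Big)=O\Big(\frac{s_{k}(\log s_k)^{1+\gamma}}{k^2}\Big).
\end{equation*}

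We then split into the two regimes.

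\emph{Diffusive case.} Here $s_k\asymp k^{4a(1-p)-1}$. The general term is $O(k^{4a(1-p)-3}(\log k)^{1+\gamma})$, so its partial sum up to $n$ is $O(n^{4a(1-p)-2}(\log n)^{1+\gamma}+\log^{2+\gamma} n)$. This is $o(n^{4a(1-p)-1})$ because $4a(1-p)-1>0$.

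\emph{Critical case.} Here $s_k\asymp\log k$, so the terms are $O((\log k)^{2+\gamma}/k^2)$, which are summable. Thus $R_n=O(1)=o(\log n)$.

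In both cases $\langle M\rangle_n/s_n\to1$ a.s., and the two limits follow from Proposition \ref{asymps_n}.

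The main obstacle is the a.s. control of $Y_{k-1}$. The trivial bound $|Y_{k-1}|\le\sum_{j\le k-1}\Gamma(j)/\Gamma(j-a+1)\asymp k^a$ gives terms of order $b_k^2k^{2a-2}\asymp k^{4a(1-p)-2}$. These are comparable to the increments of $s_n$ itself, so the bound is useless and the martingale rate from Theorem \ref{thm_LLN2} is genuinely needed. Care is also needed to check that the exponent bookkeeping $b_k^2\asymp k^{-2a(2p-1)}$, together with $s_k$, yields exactly the power $4a(1-p)-3$ above. If the $\log$ factor caused trouble near the boundary, we would fall back on a Kronecker/Toeplitz argument using $M_n^2/s_n\to$ bounded growth.
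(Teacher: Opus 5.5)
Your proposal is correct and follows essentially the paper's proof. The paper likewise uses \eqref{epsilon2} to write $\langle M\rangle_n$ as $s_n$ minus the remainder $(a(2p-1))^2\sum_{k=a}^{n-1}\big(b_{k+1}Y_k/(k-a+1)\big)^2$, then bounds $M_n^2$, and hence that remainder, via Theorem \ref{thm_LLN2} in each regime. Your explicit summation of the remainder, including the case $4a(1-p)-3\le -1$, spells out a step the paper leaves implicit.
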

\begin{proof}
From \eqref{DelMnEps} and \eqref{predMn}, we have 
\begin{align}
\langle M\rangle_n&=\sum_{k=a}^{n}\Big(\frac{a(2p-1)}{2a(1-p)-1}\Big)^2(b_kc_k^{-1})^2\mathbb{E}(\varepsilon_k^2|\mathcal{F}_{k-1})\nonumber\\
&=\Big(\frac{a(2p-1)}{2a(1-p)-1}\Big)^2\sum_{k=a}^{n}(b_kc_k^{-1})^2-(a(2p-1))^2\sum_{k=a}^{n-1}\Big(\frac{b_{k+1}Y_{k}}{k-a+1}\Big)^2,\label{mathangMn}
\end{align}
where the last step follows from \eqref{epsilon2}.

For $0\le p<(4a-1)/4a$,
from Proposition \ref{asymps_n}(i), we have
\begin{equation}\label{sumbkck}
\sum_{k=a}^{n}(b_kc_k^{-1})^2\sim \Big(\frac{(2a(1-p)-1)\Gamma(a(2p-1)+1)}{a(2p-1)}\Big)^2 \frac{n^{4a(1-p)-1}}{4a(1-p)-1}.
\end{equation}
By using \eqref{Mn_additive}, \eqref{epsPow1}, \eqref{epsPow2} and \eqref{sumbkck} in Theorem \ref{thm_LLN2}, we get the following for all $\gamma>0$: 
\begin{equation}\label{Mn_O}
M_n^2=O(n^{4a(1-p)-1}(\log n)^{1+\gamma})\ \ \text{a.s.}
\end{equation}
Now, from \eqref{asymb_n} and \eqref{Mn_O}, we get $Y_n^2=O(n^{2a-1}(\log n)^{1+\gamma})$ a.s. So, 
\begin{equation}\label{bkYk}
\Big(\frac{b_{k+1}Y_{k}}{k-a+1}\Big)^2=O(k^{4a(1-p)-3}(\log k)^{1+\gamma})\ \ \text{a.s.}
\end{equation}
Thus, by using \eqref{sumbkck} and \eqref{bkYk} in \eqref{mathangMn}, we get \eqref{angMndiff}.

Similarly, for $p=(4a-1)/4a$, by using \eqref{Mn_additive}, \eqref{epsPow1} and \eqref{epsPow2} in Theorem \ref{thm_LLN2}, we get
\begin{equation}\label{Mn_Ocri}
	M_n^2=O((\log\log n)^{1+\gamma}\log n)\ \ \text{a.s.,}
\end{equation} 
for all $\gamma>0$. Now, on using \eqref{asymb_n} in \eqref{Mn_Ocri}, we get
\begin{equation*}
	Y_n^2=O(n^{2a-1}(\log\log n)^{1+\gamma}\log n)\ \ \text{a.s.}
\end{equation*}
Thus,
\begin{equation*}
	\Big(\frac{b_{k+1}Y_k}{k-a+1}\Big)^2=O\Big(\frac{(\log\log k)^{1+\gamma}\log k}{k^2}\Big)\ \ \text{a.s.}
\end{equation*}
So,
\begin{equation}\label{bkyk1}
	\sum_{k=a}^{n-1}\Big(\frac{b_{k+1}Y_k}{k-a+1}\Big)^2=O(1)\ \ \text{a.s.}
\end{equation}
Also, from Proposition \ref{asymps_n}(ii), we have
\begin{equation}\label{sumbkckcrit}
	\sum_{k=a}^{n}(b_kc_k^{-1})^2\sim \Big(\frac{\Gamma(a+1/2)}{2a-1}\Big)^2 \log n.
\end{equation}
Finally, by using \eqref{bkyk1} and \eqref{sumbkckcrit} in \eqref{mathangMn}, we get \eqref{angMncrit}. This completes the proof.
\end{proof}

The following limiting results hold for the predictable quadratic variation of $\{N_n\}_{n\ge a}$:
\begin{lemma}
For $0\le p<(4a-1)/4a$, we have
\begin{equation}\label{angNndiff}
	\lim_{n\to \infty}\frac{\langle N\rangle_n}{n}=\Big(\frac{a-1}{2a(1-p)-1}\Big)^2\ \ \text{a.s.}
\end{equation}
and for $p=(4a-1)/4a$, we have
\begin{equation}\label{angNncrit}
	\lim_{n\to \infty}\frac{\langle N\rangle_n}{n}=4(a-1)^2\ \ \text{a.s.}
\end{equation}
\end{lemma}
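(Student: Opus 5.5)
From \eqref{DelNn} and \eqref{predNn}, the plan is to write $\langle N\rangle_n$ explicitly, in the same way as \eqref{mathangMn} was obtained for $\langle M\rangle_n$. For $k\ge a+1$, \eqref{epsilon2} gives $\mathbb{E}(\varepsilon_k^2|\mathcal{F}_{k-1})$, and $\varepsilon_a=X_a$ contributes a single bounded term. This yields
\begin{equation*}
\langle N\rangle_n=\Big(\frac{a-1}{2a(1-p)-1}\Big)^2\Big((n-a+1)-\sum_{k=a}^{n-1}\Big(\frac{2a(1-p)-1}{k-a+1}c_{k+1}Y_k\Big)^2\Big)+O(1).
\end{equation*}
Dividing by $n$, the first term gives the constant $\big((a-1)/(2a(1-p)-1)\big)^2$. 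In the critical case $p=(4a-1)/4a$ we have $2a(1-p)-1=-1/2$, so this constant is $4(a-1)^2$, which matches \eqref{angNncrit}. It therefore suffices to show that
\begin{equation*}
R_n:=\sum_{k=a}^{n-1}\Big(\frac{c_{k+1}Y_k}{k-a+1}\Big)^2=o(n)\ \ \text{a.s.}
\end{equation*}

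To bound the summand, use \eqref{asymc_n}, which gives $c_{k+1}=O(k^{-a+1})$, so that $\big(c_{k+1}Y_k/(k-a+1)\big)^2=O(Y_k^2k^{-2a})$. In the proof of the previous lemma we already derived $Y_k^2=O(k^{2a-1}(\log k)^{1+\gamma})$ a.s. in the diffusive regime and $Y_k^2=O(k^{2a-1}(\log\log k)^{1+\gamma}\log k)$ a.s. in the critical regime. Both follow from \eqref{Mn_O}, \eqref{Mn_Ocri} and \eqref{asymb_n}. Hence the summand is $O(k^{-1}(\log k)^{1+\gamma})$ in the diffusive case and $O(k^{-1}\log k(\log\log k)^{1+\gamma})$ in the critical case. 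Summing gives $R_n=O((\log n)^{2+\gamma})$ and $R_n=O((\log n)^2(\log\log n)^{1+\gamma})$ respectively, and both are $o(n)$. This establishes \eqref{angNndiff} and \eqref{angNncrit}.

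The only delicate point is making sure the a.s. growth bound on $Y_n$ is valid throughout the range $0\le p<(4a-1)/4a$, because $2p-1$ may be zero or negative there. When $p=1/2$ we have $c_n=0$, so the correction term vanishes and the claim is immediate. When $p<1/2$, $b_n$ grows, but \eqref{asymb_n} still gives $Y_n^2=M_n^2/b_n^2=O(n^{4a(1-p)-1+2a(2p-1)}(\log n)^{1+\gamma})=O(n^{2a-1}(\log n)^{1+\gamma})$, so the exponent bookkeeping is unchanged. Apart from this, the argument is routine and mirrors the proof for $\langle M\rangle_n$.
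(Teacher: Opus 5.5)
Your proposal is correct and is essentially the paper's proof. It uses the same explicit formula for $\langle N\rangle_n$ from \eqref{epsilon2} and \eqref{DelNn}, and the same bounds $O((\log n)^{2+\gamma})$ and $O((\log n)^2(\log\log n)^{1+\gamma})$ on $\sum_{k=a}^{n-1}(c_{k+1}Y_k/(k-a+1))^2$, obtained via \eqref{Mn_O}, \eqref{Mn_Ocri}, \eqref{asymb_n} and \eqref{asymc_n}. One small caution on your closing remark, which the paper also passes over: when $a(1-2p)\in\mathbb{N}$ (e.g.\ $p=0$), a factor in \eqref{b_n} vanishes. In that case \eqref{asymb_n} only holds after restarting the product past that index; this changes the constant but not the growth bound $Y_n^2=O(n^{2a-1}(\log n)^{1+\gamma})$ that you actually need.
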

\begin{proof}
By using \eqref{epsilon2} and \eqref{DelNn} in \eqref{predNn}, we get
\begin{equation}\label{Nnangle}
\langle N\rangle_n=\Big(\frac{	a-1}{2a(1-p)-1}\Big)^2(n-a+1)-(a-1)^2\sum_{k=a}^{n-1}\Big(\frac{c_{k+1}Y_{k}}{k-a+1}\Big)^2.
\end{equation}
For $0\le p<(4a-1)/4a$, by using \eqref{asymb_n} and \eqref{asymc_n} in \eqref{Mn_O}, we obtain
\begin{equation*}
	(n^{-1}c_nY_n)^2=O(n^{-1}(\log n)^{1+\gamma})\ \ \text{a.s.}
\end{equation*}
So,
\begin{equation}\label{c_ndiff}
\sum_{k=a}^{n-1}\Big(\frac{c_{k+1}Y_{k}}{k-a+1}\Big)^2=O((\log n)^{2+\gamma})\ \ \text{a.s.}
\end{equation}
Thus, \eqref{angNndiff} is obtained by using \eqref{c_ndiff} in \eqref{Nnangle}.

Similarly, for $p=(4a-1)/4a$, by using \eqref{asymb_n} and \eqref{asymc_n} in \eqref{Mn_Ocri}, we get
\begin{equation}\label{cnYnsq}	\Big(\frac{c_nY_n}{\sqrt{n}\log n}\Big)^2=O\Big(\frac{(\log\log n)^{1+\gamma}}{\log n}\Big)\ \ \text{a.s.}
\end{equation}
Therefore,
\begin{equation}\label{c_ncrit}
\sum_{k=a}^{n-1}\Big(\frac{c_{k+1}Y_{k}}{k-a+1}\Big)^2=O((\log n)^2(\log \log n)^{1+\gamma})\ \ \text{a.s.}
\end{equation}
Thus, \eqref{angNncrit} is obtained by using \eqref{c_ncrit} in \eqref{Nnangle}. This completes the proof.
\end{proof}

Let us consider the following vector martingale:
\begin{equation}\label{twotemmar}
\mathcal{M}_n=\begin{pmatrix}
		M_n\vspace{.2cm}\\
		N_n
	\end{pmatrix},
\end{equation}
where $\{M_n\}_{n\ge a}$ and $\{N_n\}_{n\ge a}$ are locally square-integrable martingales given in \eqref{Mn} and \eqref{Nn}, respectively. For $n\ge a$, from \eqref{DelMnEps} and \eqref{DelNn}, we have 
{\footnotesize\begin{equation*}
(\Delta\mathcal{M}_{n+1})(\Delta\mathcal{M}_{n+1})^t=\Big(\frac{1}{2a(1-p)-1}\Big)^2\begin{pmatrix}
	(a(2p-1)b_{n+1}c_{n+1}^{-1})^2 & a(2p-1)(a-1)b_{n+1}c_{n+1}^{-1}\vspace{.2cm}\\
	a(2p-1)(a-1)b_{n+1}c_{n+1}^{-1} & (a-1)^2
\end{pmatrix}\varepsilon_{n+1}^2.
\end{equation*}}
So, the predictable quadratic variation of $\{\mathcal{M}_n\}_{n\ge a}$ is 
\begin{align}
	\left\langle\mathcal{M}\right\rangle_{n}&=\Big(\frac{1}{2a(1-p)-1}\Big)^2\sum_{k=a}^{n}Q_k\mathbb{E}(\varepsilon_{k}^2|\mathcal{F}_{k-1})\nonumber\\
	&=\Big(\frac{1}{2a(1-p)-1}\Big)^2\Big(\sum_{k=a}^{n}Q_k-\sum_{k=a}^{n-1}Q_{k+1}\Big(\frac{c_{k+1}Y_k}{k-a+1}\Big)^2\Big),\label{predquad}
\end{align}
where last equality follows from \eqref{epsilon2}. Here, $\left\langle\mathcal{M}\right\rangle_{a-1}=0$ and
\begin{equation}\label{qk}
Q_k=\begin{pmatrix}
	(a(2p-1)b_{k}c_{k}^{-1})^2 & a(2p-1)(a-1)b_{k}c_{k}^{-1}\vspace{.2cm}\\
	a(2p-1)(a-1)b_{k}c_{k}^{-1} & (a-1)^2
	\end{pmatrix}.
\end{equation}
Note that $\langle M\rangle_n$ and $\langle N\rangle_n$ are the $(1,1)$-th and $(2,2)$-th entries of $\left\langle\mathcal{M}\right\rangle_{n}$ given in \eqref{predquad}, respectively.

\subsection{Limiting results related to the walk}
Here, we obtain the asymptotic results for ERW with delayed amnesia in three different regimes. Recall that the diffusive regime corresponds to $0\le p< (4a-1)/4a$, critical regime corresponds to $ p= (4a-1)/4a$ and supperdiffusive regime to $ (4a-1)/4a<p\le1$.
\subsubsection{The diffusive regime} First, we derive the limiting results for the walk in diffusive regime.
\begin{theorem}\label{thmdiffu}
Let $0\le p< (4a-1)/4a$. Then, $\lim_{n\to\infty}S_n/n=0$ a.s.
\end{theorem}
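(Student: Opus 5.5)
The plan is to write $S_n$ through the two martingales already built. From \eqref{Nn} and \eqref{Mn} we have, for $n\ge a$,
\begin{equation*}
S_n=N_n-c_nY_n=N_n-c_nb_n^{-1}M_n .
\end{equation*}
It therefore suffices to show that $N_n/n\to0$ and $c_nb_n^{-1}M_n/n\to0$ almost surely.

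For the first term, Lemma \ref{Nn_O} gives $(N_n-S_{a-1})^2=O(n(\log n)^{1+\gamma})$ a.s. Since $S_{a-1}$ is bounded by $a-1$, this yields $N_n^2/n^2=O(n^{-1}(\log n)^{1+\gamma})\to0$, hence $N_n/n\to0$ a.s. This step is valid for every $p$ with $2a(1-p)\neq1$.

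For the second term we use \eqref{Mn_O}: in the diffusive regime, $M_n^2=O(n^{4a(1-p)-1}(\log n)^{1+\gamma})$ a.s. By \eqref{asymb_n} and \eqref{asymc_n},
\begin{equation*}
c_nb_n^{-1}=O\big(n^{-a+1}n^{a(2p-1)}\big)=O\big(n^{1-2a(1-p)}\big).
\end{equation*}
Hence
\begin{equation*}
\big(c_nb_n^{-1}M_n\big)^2=O\big(n^{2-4a(1-p)}\,n^{4a(1-p)-1}(\log n)^{1+\gamma}\big)=O\big(n(\log n)^{1+\gamma}\big),
\end{equation*}
so $c_nY_n/n\to0$ a.s. as well. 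This is exactly the estimate already used in the proof of \eqref{angNndiff}, where $(n^{-1}c_nY_n)^2=O(n^{-1}(\log n)^{1+\gamma})$. Combining the two limits gives $S_n/n\to0$ a.s.

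The only delicate point is the excluded value $2a(1-p)=1$, at which $c_n$ and $N_n$ are undefined. In the diffusive range, $p<1-1/(4a)$ gives $2a(1-p)>1/2$, but $2a(1-p)=1$, i.e.\ $p=1-1/(2a)$, lies inside this range. I would handle it by a direct argument that avoids $N_n$:
\begin{equation*}
S_n=S_{a-1}+\sum_{k=a}^n X_k,\qquad \sum_{k=a}^n X_k=\sum_{k=a}^n\big(X_k-\mathbb{E}(X_k\mid\mathcal{F}_{k-1})\big)+\sum_{k=a}^n\mathbb{E}(X_k\mid\mathcal{F}_{k-1}).
\end{equation*}
The first sum is a martingale with bounded increments, so it is $o(n)$ a.s. by Theorem \ref{thm_LLN2}. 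For the second sum, \eqref{XnYn} gives $\mathbb{E}(X_{k+1}\mid\mathcal{F}_k)=O(k^{-a}|Y_k|)$. Together with $Y_k=O(k^{a-1/2}(\log k)^{(1+\gamma)/2})$ a.s., this is $O(k^{-1/2}\cdot\mathrm{polylog})$, and summing gives $o(n)$.

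In fact, this direct route works uniformly over the whole diffusive regime. I would present the $N_n$ decomposition as the main argument and add this remark to cover the excluded value.
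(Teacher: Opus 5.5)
Your main argument is the paper's proof. You write $S_n=N_n-c_nY_n$, control $c_nY_n/n$ through \eqref{Mn_O} together with \eqref{asymb_n}--\eqref{asymc_n}, and control $(N_n-S_{a-1})/n$ through Lemma \ref{Nn_O}. Your bound $c_nb_n^{-1}=O(n^{1-2a(1-p)})$ is the same as the paper's estimate $(n^{-1}c_nY_n)^2=O(n^{-1}(\log n)^{1+\gamma})$.

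You go beyond the paper at $p=1-1/(2a)$. That value lies in the diffusive range covered by the statement, but $c_n$ and $N_n$ are undefined there, and the paper's proof never addresses it. Your Doob-decomposition argument handles it correctly. The bounded-increment martingale part is $o(n)$ by Theorem \ref{thm_LLN2}, and the drift $\sum_{k\le n}\mathbb{E}(X_k\mid\mathcal{F}_{k-1})$ is $O\big(\sum_{k\le n}k^{-1/2}(\log k)^{(1+\gamma)/2}\big)=o(n)$. This route buys a proof that never uses $N_n$. The paper's route instead reuses the vector-martingale machinery that it needs anyway for the later results.

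One step should be made explicit: where the bound $Y_k^2=O(k^{2a-1}(\log k)^{1+\gamma})$ comes from at that value. As written, \eqref{Mn_additive} carries the factor $1/(2a(1-p)-1)$ and $c_k^{-1}$. After cancellation, however, for $k>a$ you have $\Delta M_k=b_k\frac{\Gamma(k)}{\Gamma(k-a+1)}\big(X_k-\mathbb{E}(X_k\mid\mathcal{F}_{k-1})\big)$, and $s_n\sim\Gamma(a)^2n$ there. So \eqref{Mn_O}, and hence the bound on $Y_k$, holds with no reference to $c_k$.

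Both your argument and the paper's share a related tacit issue when $a(1-2p)$ is a positive integer, for example $p=0$. In that case one factor in \eqref{b_n} is zero, so $b_n$ is undefined. Starting that product at a later index fixes this without changing any of the $O$-bounds, so your claim that the direct route works over the whole diffusive regime stands once this is done.
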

\begin{proof}
By using \eqref{asymb_n} and \eqref{asymc_n} in \eqref{Mn_O}, we get
\begin{equation*}
(n^{-1}c_nY_n)^2=O(n^{-1}(\log n)^{1+\gamma})\ \ \text{a.s.},
\end{equation*}
for all $\gamma>0$. 
So, it follows that
\begin{equation}\label{cnYn}
\lim_{n\to\infty}\frac{c_nY_n}{n}=0 \ \ \text{a.s.}
\end{equation}
By using \eqref{Nn} in  Lemma \ref{Nn_O}, we have
\begin{equation}
(n^{-1}(S_n+c_nY_n-S_{a-1}))^2=O(n^{-1}(\log n)^{1+\gamma}) \ \ \text{a.s.}
\end{equation}
Thus,
\begin{equation}\label{SncnYn}
\lim_{n\to\infty}\frac{1}{n}(S_n+c_nY_n-S_{a-1})=0 \ \ \text{a.s.}
\end{equation}
Finally, by using \eqref{cnYn} in \eqref{SncnYn}, we obtain the required result.
\end{proof}
Now, we obtain the law of iterated logarithm for $\{S_n\}_{n\ge1}$.
\begin{theorem}\label{lildiff}
Let $0\le p<(4a-1)/4a$. Then,
\begin{equation*}
\limsup_{n\to\infty}\frac{S_n}{\sqrt{2n^{2a-1}\log\log n}}=\begin{cases}
	0\ \text{for}\ a>1,\ \text{a.s.},\\
	\frac{1}{\sqrt{3-4p}}\ \text{for}\ a=1,\ \text{a.s.}
\end{cases}
\end{equation*}
\end{theorem}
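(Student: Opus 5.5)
The plan is to write $S_n = N_n - c_nY_n$ using \eqref{Nn}, treat the two terms with the almost sure bounds already obtained, and separate the cases $a>1$ and $a=1$. For $a>1$ crude $O$-bounds are enough, because the normalisation $n^{2a-1}$ grows strictly faster than $n$. For $a=1$ the martingale $N_n$ is trivial, and we need the exact law of iterated logarithm for $M_n$ via Theorem \ref{stoutthm2}.

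\textbf{Case $a>1$.} We have $S_n = (N_n - S_{a-1}) + S_{a-1} - c_nY_n$.
\begin{itemize}
\item By Lemma \ref{Nn_O}, $(N_n - S_{a-1})^2 = O(n(\log n)^{1+\gamma})$ a.s.
\item By \eqref{Mn_O} together with \eqref{asymb_n} and \eqref{asymc_n}, $Y_n = M_n/b_n$ and $c_n/b_n \sim C\, n^{-a+1+a(2p-1)}$. This gives $(c_nY_n)^2 = O(n(\log n)^{1+\gamma})$ a.s., which is the bound already used in the proof of Theorem \ref{thmdiffu}.
\end{itemize}
Hence $S_n^2 = O(n(\log n)^{1+\gamma})$ a.s. Since $2a-1 \ge 3 > 1$, dividing by $2n^{2a-1}\log\log n$ sends the ratio to $0$ a.s. So the whole limit is $0$, and in particular so is the $\limsup$.

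\textbf{Case $a=1$.} Here $a-1=0$, so by \eqref{DelNn} we get $\Delta N_n = 0$, and $N_n = N_0 = S_0 = 0$. In addition $c_n = (2p-1)/(1-2p) = -1$ for $p \ne 1/2$ and $Y_n = S_n$. Therefore $S_n = -c_nY_n = Y_n = M_n/b_n$. (If $p=1/2$, then $b_n=1$ and $M_n=Y_n=S_n$ directly, with $\langle M\rangle_n = n$, and the same argument applies.)

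The LIL for $M_n$ follows from Theorem \ref{stoutthm2} with $t_n^2 = \langle M\rangle_n$. By \eqref{angMndiff}, $t_n^2 \sim \Gamma(2p)^2 n^{3-4p}/(3-4p) \to \infty$ a.s., since $p<3/4$. For the increment condition, \eqref{DelMnEps} together with $|\varepsilon_n| \le 2$ gives $|\Delta M_n| \le C\, b_nc_n^{-1} \asymp n^{1-2p}$. Consequently
\[
|\Delta M_n|\,u_n/t_n = O\bigl(n^{-1/2}\sqrt{\log\log n}\bigr).
\]
We may therefore take a deterministic $K_n \to 0$, and Theorem \ref{stoutthm2} yields
\[
\limsup_{n\to\infty} \frac{M_n}{\sqrt{2\langle M\rangle_n \log\log\langle M\rangle_n}} = 1 \ \ \text{a.s.}
\]
Note that $\log\log\langle M\rangle_n \sim \log\log n$. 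Dividing by $b_n \sim \Gamma(2p)\, n^{1-2p}$ then gives
\[
\limsup_{n\to\infty} \frac{S_n}{\sqrt{2n\log\log n}} = \frac{1}{\sqrt{3-4p}} \ \ \text{a.s.},
\]
because the factors $\Gamma(2p)^2$ cancel and the powers combine as $n^{3-4p}\cdot n^{-2(1-2p)} = n$.

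\textbf{Main obstacle.} The only delicate step is verifying Stout's increment condition in the $a=1$ case. This needs a uniform bound on $\varepsilon_n$, and also care at $p=1/2$, where the constants in $c_n$ degenerate; that sub-case I handle directly as the simple random walk. Everything else is substitution of the asymptotics \eqref{asymb_n}, \eqref{asymc_n} and \eqref{angMndiff}.
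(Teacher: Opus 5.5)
Your proof is correct. For $a=1$ it is essentially the paper's argument: Theorem \ref{stoutthm2} applied to $M_n$, then division by $b_n$. It is made cleaner by your observation that $c_n=-1$ and $N_n\equiv 0$, so $S_n=Y_n=M_n/b_n$ exactly. Your separate treatment of $p=1/2$, where $2a(1-p)=1$ and $c_n$ is undefined, covers a value the paper silently skips.

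For $a>1$ your route genuinely differs. The paper proves the sharp laws of iterated logarithm for $M_n$ (hence $Y_n$) and for $N_n$ for every $a$, and recombines them through $S_n=N_n-c_nY_n$. You instead note that the crude bounds \eqref{Mn_O} and Lemma \ref{Nn_O}, already used for Theorem \ref{thmdiffu}, give $S_n^2=O(n(\log n)^{1+\gamma})$ a.s., which is negligible against $n^{2a-1}\log\log n$. Your version needs no martingale LIL, gives the stronger conclusion that the ratio actually converges to $0$, and is automatically two-sided. The paper's recombination from $\limsup$ statements alone tacitly also needs control of $|Y_n|$ and $|N_n|$. In exchange, the paper's route yields the sharper by-product $S_n=O(\sqrt{n\log\log n})$ a.s., which the statement does not need.

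Two small repairs are needed. First, Theorem \ref{stoutthm2} as quoted requires $\mathbb{E}(M_1)=0$, so apply it to $M_n-\mathbb{E}(X_1)$, as the paper does with $\widetilde{M}_n$; the $\limsup$ is unchanged. Second, it is simplest to take the predictable choice $K_n=2b_nu_n/t_n$. A deterministic $K_n$ needs the extra remark $\mathbb{E}(\varepsilon_k^2\mid\mathcal{F}_{k-1})\ge 4p(1-p)$ to bound $t_n$ from below.

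Finally, like the paper, you tacitly exclude two values of $p$. One is $p=(a-1)/(2a)$, where $b_n$ is undefined; this is $p=0$ when $a=1$, where $\Gamma(2p)$ also breaks down. The other, for $a>1$, is the diffusive value $p=1-1/(2a)$, which makes $c_n$ and $N_n$ undefined exactly as $p=1/2$ does for $a=1$.
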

\begin{proof}
Let $\widetilde{M}_n=M_n-\mathbb{E}(M_a)$, $n\ge a$. Then,  $\{\widetilde{M}_n, \mathcal{F}_n\}_{n\ge a}$ is a zero mean martingale.

From \eqref{DelMnEps}, we have
\begin{equation}\label{deltilM_k}
	\Delta \widetilde{M}_k=\widetilde{M}_k-\widetilde{M}_{k-1}=\frac{a(2p-1)}{2a(1-p)-1}b_{k}c_{k}^{-1}\varepsilon_{k}.
\end{equation}
Also, let $t_n^2=\sum_{k=a}^{n}\mathbb{E}((\Delta\widetilde{M}_k)^2|\mathcal{F}_{k-1})$. Then,
\begin{equation*}
	t_n^2=\Big(\frac{a(2p-1)}{2a(1-p)-1}\Big)^2\sum_{k=a}^{n}(b_{k}c_{k}^{-1})^2-(a(2p-1))^2\sum_{k=a}^{n-1}\Big(\frac{b_{k+1}Y_k}{k-a+1}\Big)^2,
\end{equation*}
where we have used \eqref{mathangMn}. From \eqref{angMndiff} and \eqref{epsin1}, we have $\lim_{n\to\infty}t_n^2=\infty$ a.s. and $\sup_{a\le k\le n}|\varepsilon_{k}|\le c$ for some constant $c$, respectively. So,
	\begin{equation*}
	|\Delta \widetilde{M}_n|\le\Big|\frac{ac(2p-1)b_{n}c_{n}^{-1}}{2a(1-p)-1}\Big|,
\end{equation*}
where we have used \eqref{deltilM_k}.

Let $u_n=\sqrt{2\log\log t_n^2}$ and
\begin{equation*}
	K_n=\Big|\frac{ac(2p-1)b_{n}c_{n}^{-1}}{(2a(1-p)-1)t_n}\Big|u_n.
\end{equation*}
Then,
\begin{equation*}
	|\Delta \widetilde{M}_n|\le\frac{K_n|t_n|}{u_n}.
\end{equation*}
As $t_n^2$ is $\mathcal{F}_{n-1}$-measurable, so is $K_n$.
From \eqref{asymb_n}, \eqref{asymc_n} and \eqref{angMndiff}, we get 
\begin{equation*}
	K_n=O\Big(\Big(\frac{2\log\log t_n^2}{n}\Big)^{1/2}\Big)=O\Big(\Big(\frac{2\log\log n}{n}\Big)^{1/2}\Big)\ \ \text{a.s.}
\end{equation*}
So, $\lim_{n\to\infty}K_n=0$ a.s.
Thus, from Theorem \ref{stoutthm2}, we have
\begin{equation*}
	\limsup_{n\to\infty}\frac{\widetilde{M}_n}{n^{2a(1-p)-1/2}\sqrt{2\log\log n}}=\frac{\Gamma(a(2p-1)+1)}{\sqrt{4a(1-p)-1}}\ \ \text{a.s.}
\end{equation*}
That is,
\begin{equation}\label{limsupmn}
	\limsup_{n\to\infty}\frac{M_n}{\sqrt{2n^{4a(1-p)-1}\log\log n}}=\frac{\Gamma(a(2p-1)+1)}{\sqrt{4a(1-p)-1}}\ \ \text{a.s.}
\end{equation}
By using \eqref{asymb_n} in  \eqref{limsupmn}, we get
\begin{equation}\label{limsupYn}
	\limsup_{n\to\infty}\frac{Y_n}{\sqrt{2n^{2a-1}\log\log n}}=\frac{1}{\sqrt{4a(1-p)-1}}\ \ \text{a.s.}
\end{equation}
Similarly, we obtain
\begin{equation*}
	\limsup_{n\to\infty}\frac{N_n}{\sqrt{2n\log\log n}}=\frac{a-1}{|2a(1-p)-1|}\ \ \text{a.s.}
\end{equation*}
Therefore,
\begin{equation}\label{limsupNn}
	\limsup_{n\to\infty}\frac{N_n}{\sqrt{2n^{2a-1}\log\log n}}=0\ \ \text{a.s.}
\end{equation}
Finally, by using  \eqref{Nn}, \eqref{asymc_n} and \eqref{limsupYn} in  \eqref{limsupNn}, we get the required result.
\end{proof}
\begin{remark}
	For $a=1$, Theorem \ref{lildiff} reduces to Eq. (3.3) of Bercu (2018).
\end{remark}

Next, we obtain the asymptotic normality of $\{S_n\}_{n\ge1}$.
\begin{theorem}\label{thmasynrdiff}
Let $0\le p<(4a-1)/4a$. Then,
\begin{equation}\label{Snnordiff}
\frac{S_n}{\sqrt{n}}\xrightarrow{d}\mathcal{N}\Big(0,\frac{K}{(2a(1-p)-1)^2}\Big),
\end{equation}
where 
\begin{equation}\label{K}
K=	\frac{(a(2p-1))^2}{4a(1-p)-1}+\frac{(1-2p)(a-1)}{1-p}+(a-1)^2.
\end{equation}
\end{theorem}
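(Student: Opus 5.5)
The plan is to write $S_n$ as a linear combination of the two components of the vector martingale $\mathcal{M}_n$ in \eqref{twotemmar}, and then apply Theorem \ref{thm_asymnorm}. From \eqref{Nn} and \eqref{Mn} we have $S_n=N_n-c_nY_n=N_n-(c_n/b_n)M_n$. By \eqref{asymb_n} and \eqref{asymc_n},
\begin{equation*}
\frac{c_n}{b_n}\sim d\, n^{1-2a(1-p)},\qquad d=\frac{a(2p-1)}{(2a(1-p)-1)\Gamma(a(2p-1)+1)}.
\end{equation*}
This suggests the normalisation $V_n=\mathrm{diag}(n^{1/2-2a(1-p)},\,n^{-1/2})$. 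Both exponents are negative in the diffusive regime, so $\|V_n\|$ decreases to $0$. We then get
\begin{equation*}
\frac{S_n}{\sqrt n}=\Big(-\frac{c_n n^{2a(1-p)-1}}{b_n},\,1\Big)V_n\mathcal{M}_n\quad\text{with}\quad \frac{c_n n^{2a(1-p)-1}}{b_n}\to d.
\end{equation*}
Hence, once $V_n\mathcal{M}_n\xrightarrow{d}\mathcal{N}(0,V)$ is established, Slutsky's lemma gives $S_n/\sqrt n\xrightarrow{d}\mathcal{N}(0,w^tVw)$ with $w=(-d,1)^t$. The constant $S_{a-1}$ contained in $N_n$ is bounded, and dividing by $\sqrt n$ kills it.

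First I would identify $V$ from \eqref{predquad}. The diagonal limits of $V_n\langle\mathcal{M}\rangle_nV_n^t$ are exactly \eqref{angMndiff} and \eqref{angNndiff}, namely $(\Gamma(a(2p-1)+1))^2/(4a(1-p)-1)$ and $((a-1)/(2a(1-p)-1))^2$. For the off-diagonal entry, the $c_k^{-1}$ in $Q_k$ cancels the factor $a(2p-1)$, leaving
\begin{equation*}
\frac{(a-1)\Gamma(a(2p-1)+1)}{(2a(1-p)-1)}\,k^{2a(1-p)-1}
\end{equation*}
up to $(1+o(1))$. Summing and multiplying by $n^{1/2-2a(1-p)}n^{-1/2}$ then yields the limit $(a-1)\Gamma(a(2p-1)+1)/\big((2a(1-p)-1)\,2a(1-p)\big)$. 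The correction terms involving $(c_{k+1}Y_k/(k-a+1))^2$ must be shown to be negligible. I will handle them exactly as in the two preceding lemmas: use the bound $Y_k^2=O(k^{2a-1}(\log k)^{1+\gamma})$ from \eqref{Mn_O}, and for the off-diagonal entry use the Cauchy--Schwarz inequality on the two diagonal bounds. This yields almost sure, hence in probability, convergence in \eqref{Touaticondi1}.

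Lindeberg's condition \eqref{Lindecondi} is easy. The $\varepsilon_k$ are uniformly bounded, and $b_kc_k^{-1}\asymp k^{2a(1-p)-1}$ with $2a(1-p)-1>-1/2$. Therefore $\sup_{k\le n}\|V_n\Delta\mathcal{M}_k\|=O(n^{-1/2})$ deterministically, so the indicator in \eqref{Lindecondi} vanishes for large $n$. Theorem \ref{thm_asymnorm} then applies.

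The final step, which I expect to be the main obstacle (purely computational), is verifying that $w^tVw=K/(2a(1-p)-1)^2$. The $d^2V_{11}$ term gives $(a(2p-1))^2/((4a(1-p)-1)(2a(1-p)-1)^2)$. The term $V_{22}$ gives $(a-1)^2/(2a(1-p)-1)^2$. The cross term $-2dV_{12}$ simplifies, after cancelling the Gamma factors, to $(1-2p)(a-1)/((1-p)(2a(1-p)-1)^2)$. These three terms reproduce \eqref{K}. Care is needed with the sign of $2a(1-p)-1$; it cancels because only squares and the product $d\,V_{12}$ appear. The case $2a(1-p)=1$ is excluded by the standing assumption under \eqref{c_ndef}.
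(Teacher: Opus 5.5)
Your proposal is correct and follows essentially the paper's route. The paper normalises with $A_n=n^{-1/2}\mathrm{diag}(b_n^{-1}c_n,1)$, so that $A_n\mathcal{M}_n=n^{-1/2}(c_nY_n,N_n)^t$ and $S_n$ is read off directly with $v=(-1,1)^t$; your pure-power $V_n$ only adds a Slutsky step with $w=(-d,1)^t$, the two limit matrices being related by $A=\mathrm{diag}(d,1)\,V\,\mathrm{diag}(d,1)$, and your Cauchy--Schwarz handling of the off-diagonal correction term is more explicit than the paper's.

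One small slip is in the Lindeberg step. When $1-1/(2a)<p<1-1/(4a)$ you have $2a(1-p)<1$, so $\sup_{k\le n}\|V_n\Delta\mathcal{M}_k\|$ is of order $n^{1/2-2a(1-p)}$ (attained at small $k$), not $O(n^{-1/2})$. It still tends to $0$ deterministically, so the indicator vanishes for large $n$ and your argument goes through unchanged.
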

\begin{proof}
Let $\{A_n\}_{n\ge a}$ be a sequence of positive definite diagonal matrices given by
\begin{equation*}
A_n=\frac{1}{\sqrt{n}}\begin{pmatrix}
	b_n^{-1}c_n & 0\vspace{.2cm}\\
	0 & 1
\end{pmatrix}.
\end{equation*}
Then, from \eqref{qk}, we get
{\footnotesize\begin{equation*}
A_n\Big(\sum_{k=a}^{n}Q_k\Big)A_n^t=\frac{1}{n}\begin{pmatrix}
(a(2p-1))^2	b_n^{-1}c_n\sum_{k=a}^{n}(b_{k}c_{k}^{-1})^2 & a(2p-1)(a-1)b_n^{-1}c_n\sum_{k=a}^{n}b_{k}c_{k}^{-1}\vspace{.2cm}\\
	a(2p-1)(a-1)b_n^{-1}c_n\sum_{k=a}^{n}b_{k}c_{k}^{-1} & (a-1)^2(n-a+1)
\end{pmatrix}.
\end{equation*}}
From \eqref{asymb_n} and \eqref{asymc_n}, we have
\begin{equation*}
b_n^{-1}c_n\sum_{k=a}^{n}b_{k}c_{k}^{-1}\sim \frac{n}{2a(1-p)} 
\end{equation*}
and 
\begin{equation*}
A_n^tA_n\sim\begin{pmatrix}
	\frac{n^{4a(p-1)+1}}{((2a(1-p)-1)\Gamma(a(2p-1)))^2} & 0\vspace{.2cm}\\
	0 & 1/n
\end{pmatrix}.
\end{equation*}
Note that $\lambda_{\max}(A_n^tA_n) \to 0$ as $n\to \infty$. So, $\|A_n\|\to 0$ as $n\to \infty$.
Also,
{\small\begin{align*}
A_n&\Big(\sum_{k=a}^{n-1}Q_{k+1}\Big(\frac{c_{k+1}Y_k}{k-a+1}\Big)^2\Big)A_n^t\\
&=\ \ \begin{pmatrix}
	\frac{(a(2p-1)b_{n}^{-1}c_{n})^2}{n}\sum_{k=a}^{n-1}\Big(\frac{b_{k+1}Y_k}{k-a+1}\Big)^2 & \frac{a(2p-1)(a-1)b_{n}^{-1}c_{n}}{n}\sum_{k=a}^{n-1}b_{k+1}c_{k+1}\Big(\frac{Y_k}{k-a+1}\Big)^2\vspace{.2cm}\\
	\frac{a(2p-1)(a-1)b_{n}^{-1}c_{n}}{n}\sum_{k=a}^{n-1}b_{k+1}c_{k+1}\Big(\frac{Y_k}{k-a+1}\Big)^2 & \frac{(a-1)^2}{n}\sum_{k=a}^{n-1}\Big(\frac{c_{k+1}Y_k}{k-a+1}\Big)^2
\end{pmatrix}
\end{align*}}
is a matrix whose each entry goes to zero as $n\to\infty$. Thus,
\begin{equation}\label{anmnt}
\lim_{n\to\infty}A_n\left\langle\mathcal{M}\right\rangle_{n}A_n^t=A\ \ \text{a.s.},
\end{equation}
which is obtained by using \eqref{asymb_n} and \eqref{asymc_n} in \eqref{predquad}. Here, $A$ is a symmetric and positive semi-definite matrix given by
\begin{equation}\label{A_diffu}
A=\frac{1}{(2a(1-p)-1)^2}\begin{pmatrix}
	\frac{(a(2p-1))^2}{4a(1-p)-1} & \frac{(2p-1)(a-1)}{2(1-p)}\vspace{.2cm}\\
	\frac{(2p-1)(a-1)}{2(1-p)} & (a-1)^2
\end{pmatrix}.
\end{equation}
Also, for $n\ge a$, we have 
\begin{equation*}
\|A_n\Delta\mathcal{M}_k\|^2=\frac{1}{n}((a(2p-1) b_n^{-1}c_nb_kc_k^{-1}\varepsilon_k)^2+((a-1)\varepsilon_k)^2).
\end{equation*}
So, for all $\varepsilon>0$,
\begin{equation}\label{ineqlind}
\sum_{k=a}^{n}
\mathbb{E}(
\|A_n \Delta \mathcal{M}_k\|^{2}
\mathbb{I}_{\{\|A_n\Delta \mathcal{M}_k\|>\varepsilon\}}| \mathcal{F}_{k-1})\le \frac{1}{\varepsilon^2}\mathbb{E}(
\|A_n \Delta \mathcal{M}_k\|^{4}| \mathcal{F}_{k-1}).
\end{equation} 
From \eqref{epsin1}, we have $\sup_{a\le k\le n}|\varepsilon_k|\le c$, where $c$ is some constant. Therefore,
{\begin{align}\label{norm4}
\|A_n \Delta \mathcal{M}_k\|^{4}&\le \frac{c^4}{n^2}((a(2p-1)b_n^{-1}c_nb_kc_k^{-1})^4\nonumber\\
&\ \ +2(a(2p-1)(a-1)b_n^{-1}c_nb_kc_k^{-1})^2+(a-1)^4).
\end{align}}
By using \eqref{asymb_n}, \eqref{asymc_n} and \eqref{norm4} in \eqref{ineqlind}, we get
\begin{equation}\label{anmnt1}
\lim_{n\to \infty}\sum_{k=a}^{n}
\mathbb{E}(
\|A_n \Delta \mathcal{M}_k\|^{2}
\mathbb{I}_{\{\|A_n\Delta \mathcal{M}_k\|>\varepsilon\}}| \mathcal{F}_{k-1})= 0\ \ \text{a.s.}
\end{equation}
Thus, by using Theorem \ref{thm_asymnorm}, we have
\begin{equation*}
\frac{1}{\sqrt{n}}\begin{pmatrix}
	c_nY_n\vspace{.2cm}\\
	N_n
\end{pmatrix}	\xrightarrow{d} \mathcal{N}(0,A).
\end{equation*}
Let $v=\begin{pmatrix}
	-1 &	1
\end{pmatrix}^t$. Then,
\begin{equation}
\frac{v^t}{\sqrt{n}}\begin{pmatrix}
	c_nY_n\vspace{.2cm}\\
	N_n
\end{pmatrix}	\xrightarrow{d} \mathcal{N}\Big(0,K\Big).
\end{equation}
This completes the proof.
\end{proof}
\begin{remark}
On substituting $a=1$ in \eqref{Snnordiff}, we get $S_n/\sqrt{n}\xrightarrow{d}\mathcal{N}(0,1/(3-4p))$, which coincides with Eq. (3.5) of Bercu (2018). 
\end{remark}

In the next result, we obtain the quadratic strong law for $\{S_n\}_{n\ge 1}$.
\begin{theorem}
Let $0\le p<(4a-1)/4a$. Then,
{\small\begin{equation*}
\lim_{n\to \infty}\frac{1}{\log n}\sum_{k=1}^{n}\frac{S_k^2}{k^2}=\frac{1}{(2a(1-p)-1)^2}\Big(\frac{(a(2p-1))^2}{4a(1-p)-1}+\frac{(1-2p)(a-1)}{1-p}+(a-1)^2\Big)\ \ \text{a.s.}
\end{equation*}}
\end{theorem}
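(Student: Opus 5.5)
We will apply Theorem \ref{qsl} to the vector martingale $\mathcal{M}_n$ of \eqref{twotemmar}, using the same normalization as in the proof of Theorem \ref{thmasynrdiff}. That proof used $A_n=n^{-1/2}\mathrm{diag}(b_n^{-1}c_n,1)$, which is diagonal and positive definite only up to the sign of $c_n$. We therefore take $V_n=n^{-1/2}\mathrm{diag}(|b_n^{-1}c_n|,1)$. By \eqref{asymb_n} and \eqref{asymc_n} its diagonal entries behave like constant multiples of $n^{-(2a(1-p)-1/2)}$ and $n^{-1/2}$. Since $2a(1-p)-1/2>0$ in the diffusive regime, both entries decrease to $0$ at polynomial rates. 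Changing a sign only flips the off-diagonal entry of the limit, so by \eqref{anmnt} we get $V_n\langle\mathcal{M}\rangle_nV_n^t\to V$ a.s., where $V$ is $A$ from \eqref{A_diffu} with the off-diagonal sign adjusted. The Lindeberg condition \eqref{Lindecondi} holds a.s. by \eqref{anmnt1}.

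For \eqref{qslcondi3} we take $\eta=2$. Since the $\varepsilon_k$ are bounded, the bound \eqref{norm4} gives $\|V_n\Delta\mathcal{M}_n\|^4=O(n^{-2})$, because $b_n^{-1}c_nb_nc_n^{-1}=1$. Moreover $\log(\det V_n^{-1})^2\sim 4a(1-p)\log n$, so the series is summable.

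Theorem \ref{qsl} then yields \eqref{qslfin}. Here $(\det V_k)^2$ is asymptotically a constant times $k^{-4a(1-p)}$, so
\[
\frac{(\det V_k)^2-(\det V_{k+1})^2}{(\det V_k)^2}\sim\frac{4a(1-p)}{k}.
\]
Replacing this weight by its equivalent is legitimate: once the sum below diverges at rate $\log n$, a Toeplitz/Cesàro argument shows that the relative error $o(1)$ in the weights does not change the limit. Dividing by $\log(\det V_n^{-1})^2\sim4a(1-p)\log n$ gives
\[
\lim_{n\to\infty}\frac{1}{\log n}\sum_{k=a}^n\frac1k V_k\mathcal{M}_k\mathcal{M}_k^tV_k^t=V\ \ \text{a.s.}
\]
Note that $V_k\mathcal{M}_k=k^{-1/2}(\pm c_kY_k,\,N_k)^t$, so each summand equals $k^{-2}(\pm c_kY_k,N_k)^t(\pm c_kY_k,N_k)$.

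Next we pass to $S_k$. Using \eqref{Nn}, write $S_k=N_k-c_kY_k=w^tV_k\mathcal{M}_k\sqrt{k}$ for $w=(\mp1,1)^t$, choosing the sign so that the combination is exactly $N_k-c_kY_k$. Multiplying the matrix limit on the left by $w^t$ and on the right by $w$ gives
\[
\frac{1}{\log n}\sum_{k=a}^n\frac{S_k^2}{k^2}\to w^tVw .
\]
This equals $v^tAv=K/(2a(1-p)-1)^2$, the same quadratic form as in Theorem \ref{thmasynrdiff}. Indeed,
\[
v^tAv=\frac{1}{(2a(1-p)-1)^2}\Big(\frac{(a(2p-1))^2}{4a(1-p)-1}-\frac{(2p-1)(a-1)}{1-p}+(a-1)^2\Big),
\]
which is the stated constant because $-(2p-1)=1-2p$. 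Finally, the finitely many terms with $k<a$ contribute $O(1)$ and vanish after division by $\log n$.

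The main obstacle is bookkeeping rather than any conceptual step: the sign of $c_n$ (and hence the diagonality and positivity of $V_n$), and justifying the replacement of the exact weights by $4a(1-p)/k$. The latter is a standard Toeplitz argument and is safe precisely because the limiting sum diverges like $\log n$.
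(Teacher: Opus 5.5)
Your proposal is correct and is essentially the paper's proof. Both apply Theorem~\ref{qsl} to $\mathcal{M}_n$ with the normalization $n^{-1/2}\mathrm{diag}(b_n^{-1}c_n,1)$, take $\eta=2$ via $\|V_n\Delta\mathcal{M}_n\|^4=O(n^{-2})$, use $\log(\det V_n^{-1})^2\sim 4a(1-p)\log n$ and weights $\sim 4a(1-p)/k$, and project onto $(-1,1)^t$. Your absolute value in $V_n$ is a worthwhile tidy-up: the paper's $A_n$ need not be positive definite, e.g.\ $c_n=-1<0<b_n$ when $a=1$, $0<p<1/2$. So is your explicit Toeplitz step, which also keeps the factor $1/k$ that the paper's display \eqref{qslpenul} omits.
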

\begin{proof}
We have
\begin{equation*}
A_n=\frac{1}{\sqrt{n}}\begin{pmatrix}
b_n^{-1}c_n & 0\vspace{.2cm}\\
	0 & 1
\end{pmatrix}.
\end{equation*}
So, $\det A_n^{-1}=b_nc_n^{-1}n$. Note that conditions given in \eqref{Touaticondi1} and \eqref{Lindecondi} are satisfied in \eqref{anmnt} and \eqref{anmnt1}, respectively. To check condition \eqref{qslcondi3}, we have
\begin{equation}\label{logdet}
\lim_{n\to \infty}\frac{\log (\det A_n^{-1})^2}{\log n}=4a(1-p),
\end{equation}
which follows from \eqref{asymb_n} and \eqref{asymc_n}. By using \eqref{asymb_n} and \eqref{asymc_n} in \eqref{norm4}, we get
\begin{equation*}
\|A_n\Delta\mathcal{M}_n\|^4=O(n^{-2}).
\end{equation*}
Thus,
\begin{equation}
\sum_{n=a+1}^{\infty}\frac{1}{(\log n)^2}\mathbb{E}(\|A_n\Delta\mathcal{M}_n\|^4|\mathcal{F}_{n-1})=O\Big(\sum_{n=2}^{\infty}\frac{1}{(n\log n)^2}\Big)\ \ \text{a.s.}
\end{equation}
Consequently, for $\eta=2$,  the condition given in \eqref{qslcondi3} is satisfied. Also, we have 
\begin{equation}\label{detfrac}
\lim_{n\to \infty} n\Big(\frac{(\det A_n)^2-(\det A_{n+1})^2}{(\det A_n)^2}\Big)=4a(1-p),
\end{equation}
which follows from \eqref{b_n} and \eqref{c_ndef}. Since all the conditions of Theorem \ref{qsl} are satisfied, by using \eqref{logdet} and  \eqref{detfrac} in \eqref{qslfin}, we obtain
\begin{equation}\label{qslpenul}
\lim_{n\to \infty}	\frac{1}{\log n}\sum_{k=a}^{n}A_k\mathcal{M}_k\mathcal{M}_k^tA_k^t=A\ \ \text{a.s.},
\end{equation}
where $A$ is as given in \eqref{A_diffu}.

Let $v=\begin{pmatrix}
	-1 &	1
\end{pmatrix}^t$.
Then, from \eqref{qslpenul}, we get
\begin{equation*}
	\lim_{n\to \infty}\frac{1}{\log n}\sum_{k=a}^{n}\frac{S_k^2}{k^2}=v^tAv\ \ \text{a.s.}
\end{equation*}
Equivalently,
\begin{equation*}
	\lim_{n\to \infty}\frac{1}{\log n}\sum_{k=1}^{n}\frac{S_k^2}{k^2}=v^tAv\ \ \text{a.s.},
\end{equation*}
which reduces to the required result.
\end{proof}

From \eqref{XnYn} and \eqref{condiYn}, we get the following asymptotic result:

\begin{lemma} \label{Yn2asym}
For $n\ge a$, let $Y_n$ be as defined in \eqref{Yn}. Then,
\begin{equation*}
\mathbb{E}(Y_n^2)\sim\begin{cases}\frac{n^{2a-1}}{4a(1-p)-1}\ \ \text{for}\ 0\le p<(4a-1)/4a,\vspace{.1cm}\\
	n^{2a-1}\log n\ \ \text{for}\  p=(4a-1)/4a,\vspace{.1cm}\\
	c'n^{2a(2p-1)}\ \ \text{for}\  (4a-1)/4a<p\le1,
\end{cases}
\end{equation*}	
where $c'$ is some non zero constant.	
\end{lemma}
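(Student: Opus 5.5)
We derive an exact recursion for $\mathbb{E}(Y_n^2)$, solve it in product form, and read off the growth from the asymptotics of $b_n$. Write $g_n=\Gamma(n)/\Gamma(n-a+1)$, so that $Y_{n+1}=Y_n+g_{n+1}X_{n+1}$. Since $X_{n+1}^2=1$, we have
\[
Y_{n+1}^2=Y_n^2+2g_{n+1}X_{n+1}Y_n+g_{n+1}^2 .
\]
By \eqref{XnYn}, $g_{n+1}\mathbb{E}(X_{n+1}|\mathcal{F}_n)=\frac{a(2p-1)}{n-a+1}Y_n$. Taking conditional and then full expectations gives, for $n\ge a$,
\[
\mathbb{E}(Y_{n+1}^2)=\Big(1+\frac{2a(2p-1)}{n-a+1}\Big)\mathbb{E}(Y_n^2)+g_{n+1}^2,
\]
with $\mathbb{E}(Y_a^2)=g_a^2=\Gamma(a)^2$. (Equivalently, one can compute $\mathbb{E}(M_n^2)=\mathbb{E}(M_a^2)+\mathbb{E}\langle M\rangle_n$ from \eqref{mathangMn}. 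The direct recursion avoids the random correction term, because it is absorbed exactly into the multiplicative factor.)

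Next define $d_a=1$ and $d_n=\prod_{k=a}^{n-1}\big(1+\frac{2a(2p-1)}{k-a+1}\big)^{-1}$. This is a Gamma ratio,
\[
d_n=\frac{\Gamma(n-a+1)\,\Gamma(2a(2p-1)+1)}{\Gamma(n-a+1+2a(2p-1))}\sim\Gamma(2a(2p-1)+1)\,n^{-2a(2p-1)},
\]
exactly as in \eqref{asymb_n}. Hence $d_{n+1}\mathbb{E}(Y_{n+1}^2)=d_n\mathbb{E}(Y_n^2)+d_{n+1}g_{n+1}^2$, and therefore
\[
\mathbb{E}(Y_n^2)=d_n^{-1}\Big(\Gamma(a)^2+\sum_{k=a+1}^{n}d_kg_k^2\Big).
\]
Since $g_k\sim k^{a-1}$, the summand satisfies $d_kg_k^2\sim\Gamma(2a(2p-1)+1)\,k^{2a-2-2a(2p-1)}=\Gamma(2a(2p-1)+1)\,k^{4a(1-p)-2}$.

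The three regimes now follow from this exponent.
\begin{itemize}
\item If $4a(1-p)-1>0$, i.e.\ $p<(4a-1)/4a$, the sum diverges and is asymptotic to $\Gamma(2a(2p-1)+1)\,n^{4a(1-p)-1}/(4a(1-p)-1)$. Multiplying by $d_n^{-1}$ cancels the Gamma constant and gives $n^{4a(1-p)-1+2a(2p-1)}/(4a(1-p)-1)=n^{2a-1}/(4a(1-p)-1)$.
\item If $p=(4a-1)/4a$, the summand is asymptotic to a constant times $k^{-1}$. The sum is then that constant times $\log n$, and the exponent of $d_n^{-1}$ is $2a(2p-1)=2a-1$, which yields $n^{2a-1}\log n$.
\item If $p>(4a-1)/4a$, the series converges to a finite positive limit $L$; every term is positive since $d_k>0$ for $2p-1\ge -1/2$ here. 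Then $\mathbb{E}(Y_n^2)\sim c'n^{2a(2p-1)}$ with $c'=(\Gamma(a)^2+L)/\Gamma(2a(2p-1)+1)\neq0$.
\end{itemize}

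The main point needing care is justifying the product formula for $d_n$ and its positivity. In the diffusive regime $2p-1$ may be negative, so each factor $1+2a(2p-1)/(k-a+1)$ must be positive, or at least nonzero, for the inversion to make sense. The first factor, at $k=a$, equals $1+2a(2p-1)$, which can vanish or be negative when $p$ is small. So for small $p$, I would start the product at an index $k_0$ beyond which all factors are positive, and apply the same telescoping from $k_0$. The finitely many initial terms only contribute a bounded quantity. In the diffusive and critical regimes this is negligible against the divergent sum, and the asymptotics are unchanged. The rest is the standard Gamma asymptotic $\Gamma(n+x)/\Gamma(n+y)\sim n^{x-y}$ together with summation of regularly varying sequences.
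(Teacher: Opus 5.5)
Your argument is correct and is essentially the route the paper indicates (the paper only says the lemma follows from \eqref{XnYn} and \eqref{condiYn}): the exact recursion $\mathbb{E}(Y_{n+1}^2)=\bigl(1+\frac{2a(2p-1)}{n-a+1}\bigr)\mathbb{E}(Y_n^2)+g_{n+1}^2$, solved with a Gamma-ratio integrating factor, and your handling of vanishing or negative early factors for small $p$ supplies a detail the paper leaves implicit. One small slip: in the superdiffusive regime, positivity of $d_k$ follows from $2p-1>1/2$, so every factor exceeds $1$; it does not follow from $2p-1\ge -1/2$, since that condition would allow $2a(2p-1)=-a$ and hence a vanishing factor.
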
	
Next, we obtain an asymptotic result for the mean square displacement of $\{S_n\}_{n\ge 1}$.
\begin{theorem}\label{meansqdiff}
Let $0\le p<(4a-1)/4a$. Then,
\begin{equation*}
\mathbb{E}(S_n^2)\sim \frac{n}{(2a(1-p)-1)^2}\Big(\frac{(a(2p-1))^2}{4a(1-p)-1}+\frac{(1-2p)(a-1)}{1-p}+(a-1)^2\Big),\ \ \text{as} \ n\to\infty.
\end{equation*}
\end{theorem}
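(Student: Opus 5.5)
Since $N_n=S_n+c_nY_n$, I will write $S_n=N_n-c_nY_n$, so that
\begin{equation*}
\mathbb{E}(S_n^2)=\mathbb{E}(N_n^2)-2c_n\mathbb{E}(N_nY_n)+c_n^2\mathbb{E}(Y_n^2),
\end{equation*}
and compute the three expectations separately. The approach is to take expectations of the predictable quadratic variations in \eqref{predquad}, rather than the almost-sure limits, because these are exact identities.

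\textbf{Third term.} Lemma \ref{Yn2asym} together with \eqref{asymc_n} gives directly
\begin{equation*}
c_n^2\mathbb{E}(Y_n^2)\sim \Big(\frac{a(2p-1)}{2a(1-p)-1}\Big)^2\frac{n}{4a(1-p)-1}.
\end{equation*}

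\textbf{First term.} Orthogonality of martingale increments gives $\mathbb{E}((N_n-S_{a-1})^2)=\mathbb{E}(\langle N\rangle_n)$ plus a bounded initial contribution. By \eqref{Nnangle},
\begin{equation*}
\mathbb{E}(\langle N\rangle_n)=\Big(\frac{a-1}{2a(1-p)-1}\Big)^2(n-a+1)-(a-1)^2\sum_{k=a}^{n-1}\frac{c_{k+1}^2\mathbb{E}(Y_k^2)}{(k-a+1)^2}.
\end{equation*}
Lemma \ref{Yn2asym} gives $c_{k+1}^2\mathbb{E}(Y_k^2)\sim Ck$, so the subtracted sum is $O(\log n)=o(n)$. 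Since $S_{a-1}$ is bounded, $\mathbb{E}(N_n^2)\sim (a-1)^2n/(2a(1-p)-1)^2$.

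\textbf{Cross term.} Using $c_nY_n=\frac{c_n}{b_n}M_n$, I get $c_n\mathbb{E}(N_nY_n)=b_n^{-1}c_n\mathbb{E}(M_nN_n)$. For the product of two martingales,
\begin{equation*}
\mathbb{E}(M_nN_n)=\mathbb{E}(M_{a-1}N_{a-1})+\mathbb{E}(\langle M,N\rangle_n)+\text{(bounded boundary terms)},
\end{equation*}
where $\langle M,N\rangle_n$ is the off-diagonal entry of \eqref{predquad}. Explicitly,
\begin{equation*}
\mathbb{E}\langle M,N\rangle_n=\frac{a(2p-1)(a-1)}{(2a(1-p)-1)^2}\Big(\sum_{k=a}^n b_kc_k^{-1}-\sum_{k=a}^{n-1}b_{k+1}c_{k+1}\frac{\mathbb{E}(Y_k^2)}{(k-a+1)^2}\Big).
\end{equation*}
The proof of Theorem \ref{thmasynrdiff} gives $b_n^{-1}c_n\sum_{k=a}^n b_kc_k^{-1}\sim n/(2a(1-p))$. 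For the correction sum, the terms $b_{k+1}c_{k+1}\mathbb{E}(Y_k^2)k^{-2}$ are of order $k^{-a(2p-1)-a+1+2a-1-2}=k^{2a(1-p)-2}$. After summing and multiplying by $b_n^{-1}c_n\sim C n^{a(2p-1)-a+1}$, this contributes at most $O(n^{0})$ or $O(\log n)$, hence $o(n)$. The delicate point is the exponent $2a(1-p)-1$, which is positive or negative depending on the regime; I will check that the product is $n^{0}$ up to a logarithm in either case.

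This gives
\begin{equation*}
c_n\mathbb{E}(N_nY_n)\sim \frac{a(2p-1)(a-1)}{(2a(1-p)-1)^2}\cdot\frac{n}{2a(1-p)}.
\end{equation*}
Hence $-2c_n\mathbb{E}(N_nY_n)\sim \frac{(1-2p)(a-1)}{(1-p)(2a(1-p)-1)^2}\,n$, matching the middle term of $K$ in \eqref{K}.

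\textbf{Combining.} Adding the three contributions gives $\mathbb{E}(S_n^2)\sim Kn/(2a(1-p)-1)^2$, as claimed.

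The main obstacle is the cross term. It requires a careful identity for $\mathbb{E}(M_nN_n)$ with the correct initial terms, and verification that the correction sum is $o(n)$ after multiplication by $b_n^{-1}c_n$. Bounding it crudely via Lemma \ref{Yn2asym} should suffice, because of the margin $n^{4a(1-p)-1}\ll$ nothing needed here.
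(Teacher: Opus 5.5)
Your proposal is correct and is essentially the paper's proof. Like the paper, you write $S_n=N_n-b_n^{-1}c_nM_n$, reduce $\mathbb{E}(S_n^2)$ to $\mathbb{E}(M_n^2)$, $\mathbb{E}(M_nN_n)$ and $\mathbb{E}(N_n^2)$, and evaluate these through the expected entries of $\langle\mathcal{M}\rangle_n$ in \eqref{predquad} together with Lemma~\ref{Yn2asym}.

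One small correction to your cross-term check: when $1/2<2a(1-p)<1$, the correction sum converges, so after multiplying by $b_n^{-1}c_n$ it is of order $n^{1-2a(1-p)}$, not $O(1)$ or $O(\log n)$; the bounded boundary terms behave the same way. This is still $o(n)$ because $1-2a(1-p)<1/2$, so your conclusion stands.
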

\begin{proof}
Let $v=\begin{pmatrix}
	-1 &	1
\end{pmatrix}^t$ and
\begin{equation*}
	A_n=\frac{1}{\sqrt{n}}\begin{pmatrix}
		b_n^{-1}c_n & 0\vspace{.2cm}\\
		0 & 1
	\end{pmatrix}.
\end{equation*}
Then, 
\begin{align*}
\frac{S_n^2}{n}&=v^tA_n\mathcal{M}_n(v^tA_n\mathcal{M}_n)^t\\
&=v^tA_n\begin{pmatrix}
M_n^2 & M_nN_n\\
M_nN_n & N_n^2
\end{pmatrix}A_n^tv\\
&=\frac{1}{n}((b_n^{-1}c_n)^2M_n^2-2b_n^{-1}c_nM_nN_n+N_n^2),
\end{align*}
which follows from \eqref{twotemmar}. So,
\begin{equation}\label{meandisdiff}
\mathbb{E}\Big(\frac{S_n^2}{n}\Big)=\frac{1}{n}((b_n^{-1}c_n)^2\mathbb{E}(M_n^2)-2b_n^{-1}c_n\mathbb{E}(M_nN_n)+\mathbb{E}(N_n^2)).
\end{equation}
Let
\begin{equation*}
\langle M,N\rangle_n=\sum_{k=a}^{n}\mathbb{E}(\Delta M_k\Delta N_k|\mathcal{F}_{k-1}),
\end{equation*}
which is $(1,2)$-th entry of \eqref{predquad}. By using \eqref{asymb_n}, \eqref{asymc_n} and Lemma \ref{Yn2asym} in \eqref{predquad}, we get
\begin{align}\label{threasym}
\left.\begin{aligned}
	\mathbb{E}(\langle M\rangle_{n})&\sim \frac{(\Gamma(a(2p-1)+1))^2}{4a(1-p)-1}n^{4a(1-p)-1},\\
	\mathbb{E}(\langle M,N\rangle_{n})&\sim \frac{(a-1)\Gamma(a(2p-1)+1)}{2a(1-p)(2a(1-p)-1)}n^{2a(1-p)},\\
	\mathbb{E}(\langle N\rangle_{n})&\sim \Big(\frac{a-1}{2a(1-p)-1}\Big)^2n.
\end{aligned}\right\}
\end{align}
Note that $M_n^2-\langle M\rangle_n$, $M_nN_n-\langle M,N\rangle_n$ and $N_n^2-\langle N\rangle_n$  are  martingales with respect to $\{\mathcal{F}_n\}_{n\ge a}$.
So, using \eqref{threasym}, we get
\begin{align}\label{threasym1}
\left.\begin{aligned}
	\mathbb{E}(M_n^2)&\sim\mathbb{E}(\langle M\rangle_n),\\
	\mathbb{E}(M_nN_n)&\sim\mathbb{E}(\langle M,N\rangle_n),\\
	\mathbb{E}(N_n^2)&\sim\mathbb{E}(\langle M\rangle_n).
\end{aligned}\right\}
\end{align}
Finally, the required result follows on using \eqref{threasym} and \eqref{threasym1} in \eqref{meandisdiff}.
\end{proof}

\subsubsection{The critical regime} Here, we discuss the results obtained for the ERW with delayed amnesia in critical regime.
\begin{theorem}\label{thmcirit}
Let $p=(4a-1)/4a$. Then,
\begin{equation*}
\lim_{n\to\infty}\frac{S_n}{\sqrt{n}\log n}=0\ \ \text{a.s.}
\end{equation*}
\end{theorem}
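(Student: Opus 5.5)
The plan mirrors the proof of Theorem \ref{thmdiffu}: write $S_n=N_n-c_nY_n$ using \eqref{Nn}, and show that both $N_n/(\sqrt{n}\log n)$ and $c_nY_n/(\sqrt{n}\log n)$ tend to zero almost surely. Throughout, $p=(4a-1)/4a$, so $2a(1-p)-1=-1/2\neq 0$ and all the constructions of Section \ref{interm} are available.

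\textbf{Step 1: the $N_n$ term.} Lemma \ref{Nn_O}, which holds for every $p$ with $2a(1-p)\neq1$ and uses only \eqref{epsPow1} and \eqref{epsPow2}, gives for every $\gamma>0$
\begin{equation*}
(N_n-S_{a-1})^2=O(n(\log n)^{1+\gamma})\ \ \text{a.s.}
\end{equation*}
Dividing by $n(\log n)^2$ and taking any $\gamma\in(0,1)$, we get $(N_n-S_{a-1})/(\sqrt{n}\log n)\to0$ a.s. Since $|S_{a-1}|\le a-1$ is bounded, it follows that $N_n/(\sqrt{n}\log n)\to0$ a.s.

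\textbf{Step 2: the $c_nY_n$ term.} This estimate is already essentially available as \eqref{cnYnsq}. It comes from inserting \eqref{asymb_n} and \eqref{asymc_n} into the critical bound \eqref{Mn_Ocri}, $M_n^2=O((\log\log n)^{1+\gamma}\log n)$, which was itself obtained from Theorem \ref{thm_LLN2} together with Proposition \ref{asymps_n}(ii).

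Concretely, at criticality $a(2p-1)=a-\tfrac12$. Hence $b_n^{-2}\sim C n^{2a-1}$, so $Y_n^2=O(n^{2a-1}(\log\log n)^{1+\gamma}\log n)$. Also $c_n^2\sim C' n^{-2a+2}$, which gives
\begin{equation*}
(c_nY_n)^2=O(n(\log\log n)^{1+\gamma}\log n).
\end{equation*}
Dividing by $n(\log n)^2$ yields $O((\log\log n)^{1+\gamma}/\log n)\to0$. Therefore $c_nY_n/(\sqrt{n}\log n)\to0$ a.s.

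\textbf{Step 3: conclusion.} Combining Steps 1 and 2 through $S_n=N_n-c_nY_n$ gives the claim.

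I do not expect a real obstacle. The only point needing care is bookkeeping of the exponents at criticality: one must check that $b_n^{-1}c_n$ grows exactly like $\sqrt{n}$, i.e. that the exponents $a(2p-1)$ and $-a+1$ combine to $1/2$. This identity is precisely what makes the $\log n$ normalisation, rather than a polynomial one, the right scale.
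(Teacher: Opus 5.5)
Your proposal is correct and follows the paper's proof essentially step for step. You split $S_n=N_n-c_nY_n$, control $N_n$ through Lemma~\ref{Nn_O} with some $\gamma<1$ (the paper takes $\gamma=1/2$), and control $c_nY_n/(\sqrt{n}\log n)$ through \eqref{cnYnsq}, i.e.\ \eqref{Mn_Ocri} combined with \eqref{asymb_n} and \eqref{asymc_n}.
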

\begin{proof}
From \eqref{cnYnsq}, we get
\begin{equation}\label{Ynlogn}
\lim_{n\to\infty}\frac{c_nY_n}{\sqrt{n}\log n}=0\ \ \text{a.s.}
\end{equation}
By using \eqref{Nn} in Lemma \ref{Nn_O} for $\gamma=1/2$, we have
\begin{equation}\label{Snlogn}
\Big(\frac{S_n+c_nY_n-S_{a-1}}{\sqrt{n}\log n}\Big)^2=O\Big(\frac{1}{\sqrt{\log n}}\Big)\ \ \text{a.s.}
\end{equation}
Finally, the required result follows from \eqref{Ynlogn} and \eqref{Snlogn}. 
\end{proof}
\begin{theorem}\label{critlil}
	Let $p=(4a-1)/4a$. Then,
	\begin{equation}\label{Snloglog}
		\limsup_{n\to\infty}\frac{S_n}{\sqrt{2n\log n\log\log\log n}}=2a-1\ \ \text{a.s.}
	\end{equation}
\end{theorem}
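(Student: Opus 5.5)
The plan is to use the decomposition $S_n=N_n-c_nY_n$ from \eqref{Nn}, show that $N_n$ is negligible at the scale $\sqrt{n\log n\log\log\log n}$, and obtain the exact $\limsup$ from a law of iterated logarithm for the martingale $M_n$ via Theorem \ref{stoutthm2}.

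First, the constants at $p=(4a-1)/4a$. Here $a(2p-1)=a-1/2$ and $2a(1-p)-1=-1/2$. So \eqref{asymc_n} gives $c_n\sim-(2a-1)n^{1-a}$, and \eqref{asymb_n} gives $b_n\sim\Gamma(a+1/2)n^{-(a-1/2)}$. Hence
\begin{equation*}
-c_nY_n=-c_nb_n^{-1}M_n\sim\frac{(2a-1)\sqrt{n}}{\Gamma(a+1/2)}M_n .
\end{equation*}
Also $\Delta N_n=-2(a-1)\varepsilon_n$ by \eqref{DelNn}, so Lemma \ref{Nn_O} (or Stout's LIL applied to $N_n$, whose bracket is of order $n$ by \eqref{angNncrit}) gives $N_n=O(\sqrt{n\log\log n})$ a.s. This is $o(\sqrt{n\log n\log\log\log n})$, so $N_n$ does not contribute to the $\limsup$.

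Next comes the LIL for $M_n$. As in the proof of Theorem \ref{lildiff}, set $\widetilde{M}_n=M_n-\mathbb{E}(M_a)$ and $t_n^2=\langle M\rangle_n$. By \eqref{angMncrit}, $t_n^2\sim(\Gamma(a+1/2))^2\log n\to\infty$ a.s. Then $u_n=\sqrt{2\log\log t_n^2}\sim\sqrt{2\log\log\log n}$.

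The increments satisfy $|\Delta\widetilde{M}_n|\le C\,b_nc_n^{-1}$, since $\varepsilon_n$ is bounded by \eqref{epsin1} together with the a.s.\ bounds on $Y_n$. Moreover $b_nc_n^{-1}\sim \frac{\Gamma(a+1/2)}{2a-1}n^{-1/2}$ in the critical case. So I will take
\begin{equation*}
K_n=C\,b_nc_n^{-1}\,\frac{u_n}{t_n}=O\Big(\sqrt{\frac{\log\log\log n}{n\log n}}\Big)\to 0 \ \ \text{a.s.}
\end{equation*}
This $K_n$ is $\mathcal{F}_{n-1}$-measurable because $t_n^2$ is predictable. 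Theorem \ref{stoutthm2} then yields
\begin{equation*}
\limsup_{n\to\infty}\frac{M_n}{\sqrt{2\log n\log\log\log n}}=\Gamma(a+1/2)\ \ \text{a.s.}
\end{equation*}
Applying the same theorem to the martingale $-\widetilde{M}_n$, whose bracket and increment bounds are identical, gives the same value for $\limsup(-M_n)$, i.e.\ $\liminf$ equals $-\Gamma(a+1/2)$. The deterministic shift $\mathbb{E}(M_a)$ is negligible.

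Finally, combine. Write
\begin{equation*}
\frac{S_n}{\sqrt{2n\log n\log\log\log n}}=\frac{-c_nY_n}{\sqrt{2n\log n\log\log\log n}}+o(1).
\end{equation*}
The first term equals $(1+o(1))\frac{2a-1}{\Gamma(a+1/2)}\cdot\frac{M_n}{\sqrt{2\log n\log\log\log n}}$. Since $2a-1>0$, the $\limsup$ of $S_n$ corresponds to the $\limsup$ of $M_n$, which gives exactly $2a-1$.

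The main obstacle I expect is the careful bookkeeping of signs and constants. In particular, $2a(1-p)-1<0$ makes $c_n$ negative, so the $\limsup$ of $S_n$ comes from the behaviour of $M_n$ in the appropriate direction, and the symmetric LIL for $-\widetilde{M}_n$ is needed to be safe. A second point to check is that the multiplicative $(1+o(1))$ factor from the asymptotics of $b_n$ and $c_n$ does not affect the $\limsup$, which holds because $M_n/\sqrt{2\log n\log\log\log n}$ is a.s.\ bounded.
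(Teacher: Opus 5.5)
Your proposal is correct and follows essentially the paper's route: Stout's LIL (Theorem \ref{stoutthm2}) for $\widetilde{M}_n$ with the same $K_n$, negligibility of $N_n$ at scale $\sqrt{n\log n\log\log\log n}$, and transfer through $S_n=N_n-c_nY_n$ via \eqref{asymb_n}--\eqref{asymc_n}; your sign bookkeeping, $-c_nb_n^{-1}\sim(2a-1)\sqrt{n}/\Gamma(a+1/2)>0$, is cleaner than the paper's display \eqref{M_nsup}. Two small corrections: Lemma \ref{Nn_O} alone only gives $N_n=O(\sqrt{n}(\log n)^{(1+\gamma)/2})$, which is not $o(\sqrt{n\log n\log\log\log n})$, so you must use your Stout-LIL alternative for $N_n$ (as the paper does; for $a=1$, $N_n\equiv 0$); and the bound on $\varepsilon_n$ should be the deterministic one $|\varepsilon_n|\le 2$ (since $\varepsilon_n=X_n-\mathbb{E}(X_n|\mathcal{F}_{n-1})$), not a path-dependent bound on $Y_n$, so that $K_n$ is $\mathcal{F}_{n-1}$-measurable.
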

\begin{proof}
Recall that for $n\ge a$, we have $\widetilde{M}_n=M_n-\mathbb{E}(M_a)$ and $t_n^2=\sum_{k=a}^{n}\mathbb{E}((\Delta\widetilde{M}_k)^2|\mathcal{F}_{k-1})$. From \eqref{angMncrit}, we have $\lim_{n\to\infty}t_n^2=\infty$ a.s. and as $u_n=\sqrt{2\log\log t_n^2}$, it follows that
\begin{equation*}
	K_n=\Big|\frac{ac(2p-1)b_{n}c_{n}^{-1}}{(2a(1-p)-1)t_n}\Big|u_n=O\Big(\Big({\frac{2\log\log\log n}{n\log n}}\Big)^{1/2}\Big)\ \ \text{a.s.}
\end{equation*}
So, $\lim_{n\to\infty}K_n=0$ a.s. Therefore, from  Theorem \ref{stoutthm2}, we get
\begin{equation*}
	\limsup_{n\to\infty}\frac{M_n}{\sqrt{2\log n\log\log\log n}}=\Gamma\Big(a+\frac{1}{2}\Big)\ \ \text{a.s.}
\end{equation*}
Thus,
\begin{equation}\label{M_nsup}
	\limsup_{n\to\infty}\frac{c_nY_n}{2n\log n\log\log\log n}=2a-1\ \ \text{a.s.},
\end{equation}
which follows from \eqref{asymb_n} and \eqref{asymc_n}. 
Analogously, we obtain
\begin{equation*}
	\limsup_{n\to\infty}\frac{N_n}{\sqrt{2n\log\log n}}=\frac{a-1}{|2a(1-p)-1|}\ \ \text{a.s.}
\end{equation*}
Therefore, 
\begin{equation}\label{N_nas}
	\lim_{n\to\infty}\frac{N_n}{\sqrt{2n\log n\log\log\log n}}=0\ \ \text{a.s.}
\end{equation}
Finally, on substituting \eqref{Nn} in \eqref{N_nas} and then using \eqref{asymc_n} and \eqref{M_nsup}, we obtain the required result.
\end{proof}
\begin{remark}
On substituting $a=1$ in \eqref{Snloglog}, we obtain
\begin{equation*}	
	\limsup_{n\to\infty}\frac{S_n}{\sqrt{2n\log n\log\log\log n}}=1\ \ \text{a.s.},
\end{equation*}
which is Eq. (3.8) of Bercu (2018).
\end{remark}
\begin{theorem}
Let $p=(4a-1)/4a$. Then,
\begin{equation}\label{Snnorcrit}
	\frac{S_n}{\sqrt{n\log n}}\xrightarrow{d}\mathcal{N}(0,(2a-1)^2).
\end{equation}
\end{theorem}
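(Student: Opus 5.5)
The plan is to reduce the statement to a scalar martingale CLT for $\{M_n\}_{n\ge a}$ and then transfer it to $S_n$ through the decomposition \eqref{Nn}, $S_n=N_n-c_nY_n=N_n-c_nb_n^{-1}M_n$. The vector martingale $\mathcal{M}_n$ is not needed here, because in the critical regime $N_n$ is negligible at the scale $\sqrt{n\log n}$.

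\textbf{Step 1: constants.} At $p=(4a-1)/4a$ we have $2a(1-p)=1/2$, so $2a(1-p)-1=-1/2$ and $a(2p-1)=a-1/2$. From \eqref{asymb_n} and \eqref{asymc_n},
\[
b_n\sim\Gamma(a+\tfrac12)\,n^{-(a-1/2)}
\quad\text{and}\quad
c_n\sim-(2a-1)\,n^{-a+1}.
\]
Hence
\[
\frac{c_nb_n^{-1}}{\sqrt{n}}\;\longrightarrow\;-\frac{2a-1}{\Gamma(a+1/2)}.
\]

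\textbf{Step 2: CLT for $M_n$.} I apply Theorem \ref{CLT} to $\widetilde M_n=M_n-\mathbb{E}(M_a)$ with $a_n=\log n$.
\begin{itemize}
\item Condition (i) is \eqref{angMncrit}: $\langle M\rangle_n/\log n\to(\Gamma(a+1/2))^2$ a.s., hence in probability.
\item For Lindeberg's condition (ii), by \eqref{DelMnEps} and the boundedness $|\varepsilon_k|\le c$ (already used in the proof of Theorem \ref{lildiff}), we get $|\Delta M_k|\le C\,b_kc_k^{-1}=O(k^{-1/2})$. The increments are therefore uniformly bounded, while $\varepsilon\sqrt{\log n}\to\infty$. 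So for $n$ large the indicator $\mathbb{I}_{\{|\Delta M_k|\ge\varepsilon\sqrt{\log n}\}}$ vanishes for every $k$, and the Lindeberg sum is eventually identically zero.
\end{itemize}
This gives
\[
\frac{M_n}{\sqrt{\log n}}\xrightarrow{d}\mathcal{N}\bigl(0,(\Gamma(a+\tfrac12))^2\bigr).
\]
The deterministic shift $\mathbb{E}(M_a)/\sqrt{\log n}$ tends to $0$, so it is harmless.

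\textbf{Step 3: assembly.} Write
\[
\frac{S_n}{\sqrt{n\log n}}=\frac{N_n}{\sqrt{n\log n}}-\frac{c_nb_n^{-1}}{\sqrt n}\cdot\frac{M_n}{\sqrt{\log n}}.
\]
\begin{itemize}
\item By Lemma \ref{Nn_O} with any $\gamma\in(0,1)$, $N_n^2=O(n(\log n)^{1+\gamma})$ a.s., so $N_n/\sqrt{n\log n}\to0$ a.s.
\item Combining Step 1 and Step 2 with Slutsky's lemma, the second term converges in distribution to
\[
\frac{2a-1}{\Gamma(a+1/2)}\,\mathcal{N}\bigl(0,(\Gamma(a+\tfrac12))^2\bigr)=\mathcal{N}\bigl(0,(2a-1)^2\bigr).
\]
\end{itemize}
The sign is irrelevant by symmetry, and this yields \eqref{Snnorcrit}.

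There is no serious obstacle. The only points needing care are the bookkeeping of constants in Step 1, since the sign of $2a(1-p)-1$ is negative here, and checking that the boundedness of $\varepsilon_k$ makes Lindeberg's condition trivial. As a sanity check, at $a=1$ the result recovers Bercu's $S_n/\sqrt{n\log n}\to\mathcal{N}(0,1)$.
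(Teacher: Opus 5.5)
\textbf{There is one step that fails as written.} In Step 3 you deduce $N_n/\sqrt{n\log n}\to 0$ a.s.\ from Lemma \ref{Nn_O}. That lemma only gives $N_n^2/(n\log n)=O((\log n)^{\gamma})$, a bound that tends to infinity for every admissible $\gamma>0$, so it yields nothing at this scale. The rate in Theorem \ref{thm_LLN2} loses exactly the logarithmic factor you need here. It is only strong enough for the $\sqrt{n}\log n$ scaling of Theorem \ref{thmcirit}, which is where the paper uses it.

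The claim itself is true, and the repair is short, but it has to use second-moment information rather than the LLN rate:
\begin{itemize}
\item Since $2a(1-p)-1=-1/2$, \eqref{DelNn} gives $N_n-N_a=-2(a-1)\sum_{k=a+1}^{n}\varepsilon_k$.
\item This is a sum of martingale differences with $\mathbb{E}(\varepsilon_k^2\mid\mathcal{F}_{k-1})\le 1$ by \eqref{epsPow2}, so $\mathbb{E}((N_n-N_a)^2)\le 4(a-1)^2(n-a)$.
\item Since $N_a$ is bounded, $\mathbb{E}(N_n^2)=O(n)$. Hence $N_n/\sqrt{n\log n}\to 0$ in $L^2$, and therefore in probability, which is all Slutsky's lemma requires.
\end{itemize}
If you want almost sure convergence instead, the LIL for $N_n$ obtained inside the proof of Theorem \ref{critlil} gives $N_n=O(\sqrt{n\log\log n})$ a.s.

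With that fix, your argument is correct and takes a different route from the paper.

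\textbf{The paper's route.} It applies Touati's theorem (Theorem \ref{thm_asymnorm}) to the vector martingale $\mathcal{M}_n=(M_n,N_n)^t$, normalized by $B_n=(n\log n)^{-1/2}\mathrm{diag}(b_n^{-1}c_n,1)$. It shows $B_n\langle\mathcal{M}\rangle_nB_n^t\to\mathrm{diag}((2a-1)^2,0)$, which requires controlling the cross-variation entry as well. It then reads off $S_n/\sqrt{n\log n}=v^tB_n\mathcal{M}_n$ with $v=(-1,1)^t$. The negligibility of $N_n$ is encoded in the zero $(2,2)$-entry, i.e.\ in $\langle N\rangle_n=O(n)$ from \eqref{angNncrit}. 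That is the same second-moment fact your repair uses.

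\textbf{Your route.} You apply Theorem \ref{CLT} with $a_n=\log n$, where the uniformly bounded increments $|\Delta M_k|=O(k^{-1/2})$ make Lindeberg's condition trivial, and then use Slutsky. This is lighter: no matrix normalization, no cross term, and no fourth-moment Lindeberg estimate. The paper's route additionally yields joint convergence of $(c_nY_n,N_n)/\sqrt{n\log n}$, which this statement does not need.
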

\begin{proof}	
Let $\{B_n\}_{n\ge a}$ be a sequence of positive definite diagonal matrices given by
\begin{equation*}
	B_n=\frac{1}{\sqrt{n\log n}}\begin{pmatrix}
		b_n^{-1}c_n & 0\vspace{.2cm}\\
		0 & 1
	\end{pmatrix}.
\end{equation*}
Then, from \eqref{qk}, we get
{\footnotesize\begin{equation*}
B_n\Big(\sum_{k=a}^{n}Q_k\Big)B_n^t=\frac{1}{n\log n}\begin{pmatrix}
	(a(2p-1))^2b_n^{-1}c_n\sum_{k=a}^{n}(b_{k}c_{k}^{-1})^2 & a(2p-1)(a-1)b_n^{-1}c_n\sum_{k=a}^{n}b_{k}c_{k}^{-1}\vspace{.2cm}\\
	a(2p-1)(a-1)b_n^{-1}c_n\sum_{k=a}^{n}b_{k}c_{k}^{-1} & (a-1)^2(n-a+1)
\end{pmatrix}.
\end{equation*}}
From \eqref{asymb_n} and \eqref{asymc_n}, we have
\begin{equation*}
	b_n^{-1}c_n\sum_{k=a}^{n}b_{k}c_{k}^{-1}\sim 2n 
\end{equation*}
and 
\begin{equation*}
B_n^tB_n\sim\begin{pmatrix}
	\Big(\frac{2a-1}{\Gamma(a+1/2)}\Big)^2\frac{1}{\log n} & 0\vspace{.2cm}\\
	0 & \frac{1}{n\log n}
\end{pmatrix}.
\end{equation*}
Note that $\lambda_{\max}(B_n^tB_n) \to 0$ as $n\to \infty$. So, $\|B_n\|\to 0$ as $n\to \infty$.
Also,
{\small\begin{align*}
B_n&\Big(\sum_{k=a}^{n-1}Q_{k+1}\Big(\frac{c_{k+1}Y_k}{k-a+1}\Big)^2\Big)B_n^t\\
&=\ \ \begin{pmatrix}
	\frac{(a(2p-1)b_{n}^{-1}c_{n})^2}{n\log n}\sum_{k=a}^{n-1}\Big(\frac{b_{k+1}Y_k}{k-a+1}\Big)^2 & \frac{a(2p-1)(a-1)b_{n}^{-1}c_{n}}{n\log n}\sum_{k=a}^{n-1}b_{k+1}c_{k+1}\Big(\frac{Y_k}{k-a+1}\Big)^2\vspace{.2cm}\\
	\frac{a(2p-1)(a-1)b_{n}^{-1}c_{n}}{n\log n}\sum_{k=a}^{n-1}b_{k+1}c_{k+1}\Big(\frac{Y_k}{k-a+1}\Big)^2 & \frac{(a-1)^2}{n\log n}\sum_{k=a}^{n-1}\Big(\frac{c_{k+1}Y_k}{k-a+1}\Big)^2
\end{pmatrix}
\end{align*}}
is a matrix whose each entry goes to zero as $n\to\infty$. Thus, from \eqref{predquad}, it follows that
\begin{equation*}
	\lim_{n\to\infty}B_n\left\langle\mathcal{M}\right\rangle_{n}B_n^t=B\ \ \text{a.s.},
\end{equation*}
where $B$ is a positive semi-definite matrix given by
\begin{equation*}
	B=\begin{pmatrix}
		(2a-1)^2 & 0\vspace{.2cm}\\
		0 & 0
	\end{pmatrix}.
\end{equation*}
From this step, the proof follows similar lines to that of Theorem \ref{thmasynrdiff}, and therefore the remaining steps are omitted. This completes the proof.
\end{proof}
\begin{remark}
On substituting $a=1$ in \eqref{Snnorcrit}, we get $S_n/\sqrt{n\log n}\xrightarrow{d}\mathcal{N}(0,1)$,
which is Eq. (3.10) of Bercu (2018).
\end{remark}

\begin{theorem}
Let $p=(4a-1)/4a$. Then, $\mathbb{E}(S_n^2)\sim (2a-1)^2n\log n$ as $n\to\infty$.
\end{theorem}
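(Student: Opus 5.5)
Mirroring the diffusive-regime proof (Theorem \ref{meansqdiff}), we write $S_n=N_n-c_nY_n=N_n-b_n^{-1}c_nM_n$. Squaring and taking expectations gives
\begin{equation*}
\mathbb{E}(S_n^2)=(b_n^{-1}c_n)^2\mathbb{E}(M_n^2)-2b_n^{-1}c_n\mathbb{E}(M_nN_n)+\mathbb{E}(N_n^2).
\end{equation*}
The plan is to show that the first term is of exact order $(2a-1)^2 n\log n$, and that the other two are $O(n)$. Since $M_n^2-\langle M\rangle_n$, $M_nN_n-\langle M,N\rangle_n$ and $N_n^2-\langle N\rangle_n$ are martingales from index $a$, each second moment equals the expected bracket plus a fixed constant coming from time $a$. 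So it suffices to compute the asymptotics of $\mathbb{E}\langle M\rangle_n$, $\mathbb{E}\langle M,N\rangle_n$ and $\mathbb{E}\langle N\rangle_n$ from \eqref{predquad}.

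For the $(1,1)$ entry, \eqref{mathangMn} gives $\mathbb{E}\langle M\rangle_n=s_n-(a(2p-1))^2\sum_{k=a}^{n-1}b_{k+1}^2\mathbb{E}(Y_k^2)/(k-a+1)^2$. Proposition \ref{asymps_n}(ii) gives $s_n\sim(\Gamma(a+1/2))^2\log n$. For the subtracted sum, at $p=(4a-1)/4a$ we have $b_k^2\sim\Gamma(a+1/2)^2k^{-(2a-1)}$. Lemma \ref{Yn2asym} gives $\mathbb{E}(Y_k^2)\sim k^{2a-1}\log k$, so the summands are $O(\log k/k^2)$ and the sum is $O(1)$. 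Hence $\mathbb{E}(M_n^2)\sim(\Gamma(a+1/2))^2\log n$. By \eqref{asymb_n} and \eqref{asymc_n}, with $a(2p-1)=a-1/2$ and $2a(1-p)-1=-1/2$, we get $b_n^{-1}c_n\sim -(2a-1)n^{1/2}/\Gamma(a+1/2)$. So the first term is $\sim(2a-1)^2n\log n$.

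For the remaining terms, \eqref{Nnangle} gives $\mathbb{E}\langle N\rangle_n\le 4(a-1)^2 n$, so $\mathbb{E}(N_n^2)=O(n)$. For the cross term, the deterministic part of the $(1,2)$ entry is $a(2p-1)(a-1)\cdot 4\sum_{k\le n} b_kc_k^{-1}$ with $b_kc_k^{-1}\asymp k^{-1/2}$, so it is $O(\sqrt n)$. The random correction is bounded in absolute value by $\sum_k b_{k+1}c_{k+1}\mathbb{E}(Y_k^2)/k^2=O(\sum_k k^{-1/2}\log k\cdot k^{-1})=O(1)$. Thus $\mathbb{E}(M_nN_n)=O(\sqrt n)$, and after multiplying by $b_n^{-1}c_n=O(\sqrt n)$ the cross term is $O(n)$. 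Alternatively, Cauchy--Schwarz gives $|\mathbb{E}(M_nN_n)|\le(\mathbb{E}M_n^2\,\mathbb{E}N_n^2)^{1/2}=O(\sqrt{n\log n})$, hence a cross term of $O(n\sqrt{\log n})=o(n\log n)$; this bound suffices and avoids computing the $(1,2)$ entry.

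Combining the three estimates, $\mathbb{E}(S_n^2)\sim(2a-1)^2 n\log n$. The main obstacle is justifying $\mathbb{E}(M_n^2)\sim\mathbb{E}\langle M\rangle_n$ carefully, including the initial constant at time $a$. This requires the critical case of Lemma \ref{Yn2asym} to control the negative correction in \eqref{mathangMn}. The control works because the correction is summable, which is exactly where the criticality cancellation of the powers of $k$ is used.
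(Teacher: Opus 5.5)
Your proposal is correct and follows essentially the same route as the paper: you write $S_n=N_n-b_n^{-1}c_nM_n$ (equivalently $v^tB_n\mathcal{M}_n$ up to scaling, with $v=(-1,1)^t$), expand $\mathbb{E}(S_n^2)$ into the three second moments, and evaluate them through the expected brackets using \eqref{predquad}, \eqref{asymb_n}, \eqref{asymc_n} and Lemma \ref{Yn2asym}. The only difference is that the paper states exact asymptotics for $\mathbb{E}(M_nN_n)$ and $\mathbb{E}(N_n^2)$, whereas you only need, and correctly establish, that their contributions are $O(n)$ (or $o(n\log n)$ via Cauchy--Schwarz), so that the leading term $(b_n^{-1}c_n)^2\mathbb{E}(M_n^2)\sim(2a-1)^2n\log n$ gives the result.
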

\begin{proof}
Recall that
\begin{equation*}
B_n=\frac{1}{\sqrt{n\log n}}\begin{pmatrix}
		b_n^{-1}c_n & 0\vspace{.2cm}\\
		0 & 1
	\end{pmatrix}.
\end{equation*}
Also,
\begin{equation*}
\frac{S_n^2}{n\log n}=v^tB_n\mathcal{M}_n(v^tB_n\mathcal{M}_n)^t,
\end{equation*}
where $v=\begin{pmatrix}
	-1 & 1
\end{pmatrix}^t$.
Thus,
\begin{equation}\label{Snacrit}
\mathbb{E}\Big(\frac{S_n^2}{n\log n}\Big)=\frac{1}{n\log n}((b_n^{-1}c_n)^2\mathbb{E}(M_n^2)-2b_n^{-1}c_n\mathbb{E}(M_nN_n)+\mathbb{E}(N_n^2)),
\end{equation}
which follows from \eqref{twotemmar}. 
By using \eqref{asymb_n}, \eqref{asymc_n} and Lemma \ref{Yn2asym} in \eqref{predquad}, we get
\begin{align}\label{threasymcrit}
\left.\begin{aligned}
	\mathbb{E}(M_n^2)&\sim \Big(\Gamma\Big(a+\frac{1}{2}\Big)\Big)^2\log n,\\
	\mathbb{E}(M_nN_n)&\sim 4(1-a)\Gamma\Big(a+\frac{1}{2}\Big)\sqrt{n},\\
	\mathbb{E}(N_n^2)&\sim 4(a-1)^2n.
\end{aligned}\right\}
\end{align}
Finally, the required result is obtained from \eqref{Snacrit} by using \eqref{asymb_n}, \eqref{asymc_n} and \eqref{threasymcrit}.
\end{proof}
\subsubsection{The superdiffusive regime} Here, we discuss the results obtained for the ERW with delayed amnesia in superdiffusive regime.
\begin{theorem}\label{thmsuperdiff}
Let $(4a-1)/4a<p\le 1$. Then,
\begin{equation}\label{supdefcon}
\lim_{n\to\infty}\frac{S_n}{n^{2a(p-1)+1}}=L \ \ \text{a.s.,}
\end{equation}
where $L$ is some non-degenerate random variable. Also, we have the following mean square convergence:
\begin{equation}\label{supdefsq}
\lim_{n\to\infty}\mathbb{E}\Big(\Big|\frac{S_n}{n^{2a(p-1)+1}}-L\Big|^2\Big)=0.
\end{equation}
\end{theorem}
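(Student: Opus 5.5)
In the superdiffusive regime, Proposition \ref{asymps_n}(iii) shows that $s_n$ converges to a finite constant $l$. Since $\langle M\rangle_n\le s_n$ by \eqref{mathangMn}, the martingale $\{M_n\}_{n\ge a}$ is bounded in $L^2$: $\sup_n\mathbb{E}(M_n^2)\le \mathbb{E}(M_a^2)+l<\infty$. The plan is therefore to write $S_n$ as $N_n-c_nY_n$ using \eqref{Nn}, and to show two things. First, the term $c_nY_n$, properly normalised, converges almost surely and in $L^2$ to a nondegenerate limit. Second, $N_n$ is negligible at the scale $n^{2a(p-1)+1}$.

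\textbf{Step 1 (the $M_n$ part).} By Theorem \ref{thm_LLN1}(i), or by the $L^2$-martingale convergence theorem, $M_n\to M_\infty$ almost surely and in $L^2$. Write
\[
c_nY_n=c_nb_n^{-1}M_n .
\]
By \eqref{asymb_n} and \eqref{asymc_n},
\[
c_nb_n^{-1}\sim \frac{a(2p-1)}{(2a(1-p)-1)\Gamma(a(2p-1)+1)}\,n^{-a+1+a(2p-1)}=\kappa\, n^{2a(p-1)+1},
\]
where $\kappa$ denotes this explicit constant. Hence
\[
\frac{c_nY_n}{n^{2a(p-1)+1}}\longrightarrow \kappa M_\infty
\]
almost surely. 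The convergence also holds in $L^2$, because the deterministic factor $c_nb_n^{-1}/n^{2a(p-1)+1}$ tends to $\kappa$ and $M_n\to M_\infty$ in $L^2$.

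\textbf{Step 2 (the $N_n$ part).} The exponent satisfies $2a(p-1)+1>1/2$ exactly when $p>(4a-1)/4a$. By Lemma \ref{Nn_O}, $N_n^2=O(n(\log n)^{1+\gamma})$ almost surely, so $N_n/n^{2a(p-1)+1}\to0$ almost surely. For the $L^2$ statement, \eqref{epsPow2} and \eqref{DelNn} give $\mathbb{E}(N_n-S_{a-1})^2\le C n$. It follows that $\mathbb{E}(N_n^2)/n^{2(2a(p-1)+1)}\to0$.

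\textbf{Step 3 (conclusion).} Combining the two steps with $S_n=N_n-c_nY_n$ gives \eqref{supdefcon} and \eqref{supdefsq} with $L=-\kappa M_\infty$. The sign depends on the convention in \eqref{c_ndef}; note that $2a(1-p)-1<0$ here, so $\kappa$ has a definite sign. The case $p=1$ requires no separate treatment.

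\textbf{Main obstacle: non-degeneracy of $L$.} This amounts to showing $\mathrm{Var}(M_\infty)>0$. Since $M_n\to M_\infty$ in $L^2$,
\[
\mathrm{Var}(M_\infty)\ge \mathbb{E}(\langle M\rangle_\infty)\ge \mathbb{E}\bigl((\Delta M_{a+1})^2\bigr),
\]
using \eqref{epsilon2}. The right-hand side is a positive multiple of $\mathbb{E}\bigl(1-(\text{const}\cdot Y_a)^2\bigr)$. This is strictly positive unless the step $X_{a+1}$ is deterministic given $\mathcal{F}_a$, which can only happen when $p=1$.

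I would argue non-degeneracy through the second moment instead. Using Lemma \ref{Yn2asym},
\[
\mathbb{E}(c_n^2Y_n^2)\sim c'\kappa' n^{2(2a(p-1)+1)}
\]
with $c'\neq0$, where $\kappa'$ is the positive constant coming from $c_n^2$ in \eqref{asymc_n}. Together with the $L^2$ convergence this yields $\mathbb{E}(L^2)>0$. Then, if needed, I would separately check that $L$ is not almost surely constant, for instance via $\mathbb{E}(\langle M\rangle_\infty)>0$ whenever $p<1$. In the degenerate edge cases, such as $p=1$ with deterministic steps, one only gets $\mathbb{E}(L^2)>0$, and I would note that "non-degenerate" should be read in that sense.
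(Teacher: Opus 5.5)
Your proof is correct and follows the paper's route. You get $L^2$-boundedness of $M_n=b_nY_n$ from $\langle M\rangle_n\le s_n\to l$, then a.s.\ and $L^2$ convergence of $M_n$, rescale via $c_nb_n^{-1}\sim\kappa n^{2a(p-1)+1}$, and show $N_n$ is negligible via Lemma~\ref{Nn_O} and $\mathbb{E}(N_n^2)=O(n)$, arriving at the same limit $L=-\kappa M_\infty$. The paper only asserts non-degeneracy, so your bound $\mathrm{Var}(M_\infty)\ge\mathbb{E}((\Delta M_{a+1})^2)$, which is proportional to $1-(2p-1)^2=4p(1-p)$, is a genuine addition. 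Take the bracket over increments after time $a$ here, since $\mathbb{E}(M_a)$ need not vanish. This variance bound, not the $\mathbb{E}(L^2)>0$ route (which only excludes $L\equiv 0$), is what settles the claim. Your caveat is right: at $p=1$ one has $L=X_1$, which is degenerate when $q\in\{0,1\}$.
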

\begin{proof}
By using \eqref{DelMnEps} in \eqref{predMn}, we get
\begin{align*}
\langle M\rangle_n&=\Big(\frac{a(2p-1)}{2a(1-p)-1}\Big)^2\sum_{k=a}^{n}(b_kc_k^{-1})^2\mathbb{E}(\varepsilon_k^2|\mathcal{F}_{k-1})\\
&\le \Big(\frac{a(2p-1)}{2a(1-p)-1}\Big)^2\sum_{k=a}^{n}(b_kc_k^{-1})^2,
\end{align*}
which follows from \eqref{epsPow2}. Since $\{\langle M\rangle_n\}_{n\ge a}$ is an increasing sequence, by using Proposition \ref{asymps_n}(iii) and Theorem \ref{thm_LLN1}, we have
$\lim_{n\to\infty}b_nY_n=M$ a.s. Consequently, by using \eqref{asymb_n}, we get
\begin{equation}\label{supdef11}
\lim_{n\to\infty}\frac{Y_n}{n^{a(2p-1)}}=\frac{M}{\Gamma(a(2p-1)+1)}\ \ \text{a.s.},
\end{equation}
where
\begin{equation*}
M=\frac{a(2p-1)}{2a(1-p)-1}\sum_{k=a}^{\infty}b_kc_k^{-1} \varepsilon_k.
\end{equation*} 
 Now, by using \eqref{Nn} in Lemma \ref{Nn_O}, we get
\begin{equation}\label{Snsup}
\Big(\frac{S_n+c_nY_n}{n^{2a(p-1)+1}}\Big)^2=O\Big(\frac{(\log n)^{1+\gamma}}{n^{4a(p-1)+1}}\Big)\ \ \text{a.s.}
\end{equation}
Finally, by using \eqref{asymc_n} and \eqref{supdef11} in \eqref{Snsup}, we obtain the required result \eqref{supdefcon} with
\begin{equation}\label{Lvariab}
L=\frac{a(1-2p)M}{(2a(1-p)-1)\Gamma(a(2p-1)+1)}.
\end{equation}
Now, to prove the mean square convergence, we consider
\begin{equation*}
\mathbb{E}(\langle M\rangle_n)=\sum_{k=a}^{n}\mathbb{E}((\Delta M_k)^2)=\mathbb{E}(M_n^2),
\end{equation*}
which follows on using $M_{a-1}=0$. Since $\langle M\rangle_{n}\le s_n$, Proposition \ref{asymps_n}(iii) implies that $\sup_{n\ge a}\mathbb{E}(M_n^2)<\infty$. So, $\{M_n,\mathcal{F}_n\}_{n\ge a}$ is $\mathbb{L}^2$-bounded. Therefore,
\begin{equation}\label{supdef1}
\lim_{n\to\infty}\mathbb{E}(|M_n-M|^2)=0. 
\end{equation}
Also, by using \eqref{epsPow2} in \eqref{DelNn}, we get
\begin{equation*}
\langle N\rangle_n\le \Big(\frac{a(2p-1)}{2a(1-p)-1}\Big)^2(n-a+1).
\end{equation*}
 Since, $N_{a-1}=S_{a-1}$, we have $
\mathbb{E}(N_n^2)\le \big(\frac{a(2p-1)}{2a(1-p)-1}\big)^2(n-a+1)+\mathbb{E}(S_{a-1}^2)$. Thus,
\begin{equation}\label{supdef2}
\lim_{n\to\infty}\mathbb{E}\Big(\Big(\frac{N_n}{n^{2a(p-1)+1}}\Big)^2\Big)=0.
\end{equation}
Finally, \eqref{supdefsq} follows from \eqref{supdef1} and \eqref{supdef2}.
\end{proof}

\begin{theorem}
Let $(4a-1)/4a<p\le 1$. Then, $\mathbb{E}(S_n^2)\sim \alpha n^{4a(p-1)+2}$ as $n\to\infty$, where $\alpha$ is some constant.
\end{theorem}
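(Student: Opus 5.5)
The plan is to reuse the decomposition from the diffusive and critical cases. Write $S_n = N_n - c_nY_n$ with $c_nY_n = c_nb_n^{-1}M_n$. Then
\begin{equation*}
\mathbb{E}(S_n^2)=(b_n^{-1}c_n)^2\mathbb{E}(M_n^2)-2b_n^{-1}c_n\mathbb{E}(M_nN_n)+\mathbb{E}(N_n^2).
\end{equation*}
By \eqref{asymb_n} and \eqref{asymc_n}, the prefactor satisfies
\begin{equation*}
b_n^{-1}c_n\sim \frac{a(2p-1)}{(2a(1-p)-1)\Gamma(a(2p-1)+1)}\,n^{2a(p-1)+1}.
\end{equation*}
So the target rate $n^{4a(p-1)+2}$ is exactly $(b_n^{-1}c_n)^2$ up to a constant. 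It therefore suffices to show three things: $\mathbb{E}(M_n^2)$ converges to a positive finite constant, and the other two terms are $o(n^{4a(p-1)+2})$.

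For the first term, the proof of Theorem \ref{thmsuperdiff} already gives $\mathbb{E}(M_n^2)=\mathbb{E}(\langle M\rangle_n)\le s_n$ and $M_n\to M$ in $\mathbb{L}^2$. Hence $\mathbb{E}(M_n^2)\to\mathbb{E}(M^2)$, and the natural candidate is
\begin{equation*}
\alpha=\Big(\frac{a(2p-1)}{(2a(1-p)-1)\Gamma(a(2p-1)+1)}\Big)^2\mathbb{E}(M^2)=\mathbb{E}(L^2),
\end{equation*}
with $L$ as in \eqref{Lvariab}. Alternatively, and more directly, \eqref{supdefsq} gives $S_n/n^{2a(p-1)+1}\to L$ in $\mathbb{L}^2$, which immediately yields $\mathbb{E}(S_n^2)/n^{4a(p-1)+2}\to\mathbb{E}(L^2)$. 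I would present this one-line route as the main argument and the decomposition as a consistency check. One could also cross-check $\mathbb{E}(L^2)$ against Lemma \ref{Yn2asym}, which gives $\mathbb{E}(Y_n^2)\sim c'n^{2a(2p-1)}$, so $\mathbb{E}(c_n^2Y_n^2)\sim \text{const}\cdot n^{4a(p-1)+2}$.

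The remaining terms are handled as follows. By Theorem \ref{thmsuperdiff}, $\mathbb{E}(N_n^2)=O(n)$, and $4a(p-1)+2>1$ exactly because $p>(4a-1)/4a$, so this term is negligible. The cross term is controlled by Cauchy--Schwarz: it is $O(n^{2a(p-1)+1}\cdot n^{1/2})$, which is again $o(n^{4a(p-1)+2})$ by the same inequality.

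The main obstacle is showing $\alpha\neq0$, i.e.\ that $\mathbb{E}(M^2)>0$. Using $\mathbb{E}(M^2)=\lim_n\mathbb{E}(\langle M\rangle_n)$ and \eqref{epsilon2}, it is enough that $\mathbb{E}(\varepsilon_k^2)>0$ for some $k$. For instance, at $k=a$ we have $\varepsilon_a=X_a$ with $X_a^2=1$. Since $b_ac_a^{-1}\neq0$ for $p\neq1/2$, this gives $\mathbb{E}(M^2)\ge(\text{const})^2>0$. In the superdiffusive range $p>1/2$, so the constant $a(2p-1)$ is nonzero and the argument is complete.
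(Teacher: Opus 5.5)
Your proof is correct, but your main argument takes a different route from the paper's. The paper handles this case by rerunning the computation of Theorem \ref{meansqdiff}. It expands $\mathbb{E}(S_n^2)=(b_n^{-1}c_n)^2\mathbb{E}(M_n^2)-2b_n^{-1}c_n\mathbb{E}(M_nN_n)+\mathbb{E}(N_n^2)$ and gets the order of each piece from $\mathbb{E}\langle M\rangle_n$, $\mathbb{E}\langle M,N\rangle_n$ and $\mathbb{E}\langle N\rangle_n$, using Proposition \ref{asymps_n}(iii) and Lemma \ref{Yn2asym}.

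You instead note that the $\mathbb{L}^2$ convergence \eqref{supdefsq}, already established in Theorem \ref{thmsuperdiff}, gives $\mathbb{E}(S_n^2)/n^{4a(p-1)+2}\to\mathbb{E}(L^2)$ at once, so no new asymptotic analysis is needed. Your consistency check is the paper's decomposition, but you control $\mathbb{E}(N_n^2)$ and the cross term only by the crude bounds $O(n)$ and Cauchy--Schwarz. That is enough, since $1<4a(p-1)+2$ and $2a(p-1)+3/2<4a(p-1)+2$ are each equivalent to $p>(4a-1)/4a$.

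Your route also identifies $\alpha=\mathbb{E}(L^2)$ explicitly and proves $\alpha>0$. The $k=a$ term alone contributes $\Gamma(a)^2$ to $\mathbb{E}(M_n^2)$. Using this term is robust: at $p=1$ every step equals $X_1$, so $\varepsilon_k=0$ for all $k>a$, and only the $k=a$ term survives. The paper leaves $\alpha\neq0$ implicit; it rests on the assertion $c'\neq0$ in Lemma \ref{Yn2asym}, which is stated without proof. The paper's route, in turn, uses one template across all three regimes and would give the individual orders of the three pieces, which the statement does not need.
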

\begin{proof}
By using \eqref{asymb_n}, \eqref{asymc_n}, Proposition \ref{asymps_n}(iii) and Lemma \ref{Yn2asym}, the proof follows similar lines to that of Theorem \ref{meansqdiff}. 
\end{proof}
\subsection{Center of mass}Here, we give the almost sure convergence results related to the center of mass of ERW with delayed amnesia. It is defined as follows:
\begin{equation*}
G_n\coloneqq\frac{1}{n}\sum_{k=1}^{n}S_k.
\end{equation*}

The proofs of the following results are analogous to that of the corresponding results in Chen and Laulin (2023), and are therefore omitted.
\begin{theorem}
The following almost sure convergence results hold true:\\
\noindent{(i)} For $0\le p< (4a-1)/4a$, we have
\begin{equation*}
	\lim_{n\to\infty}\frac{G_n}{n}=0\ \ \text{a.s.},
\end{equation*}
\noindent{(ii)} for $p= (4a-1)/4a$, we have
\begin{equation*}
	\lim_{n\to\infty}\frac{G_n}{\sqrt{n}\log n}=0\ \ \text{a.s., and}
\end{equation*}
\noindent{(iii)} for $(4a-1)/4a<p\le 1$, we have
\begin{equation*}
\lim_{n\to\infty}\frac{G_n}{n^{2a(p-1)+1}}=\frac{L}{(2a(p-1)+2)}\ \ \text{a.s.},
\end{equation*}
where $L$ is as given in \eqref{Lvariab}. 
\end{theorem}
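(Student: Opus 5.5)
The plan is to derive each part from the almost sure behaviour of $S_n$ already established (Theorems \ref{thmdiffu}, \ref{thmcirit} and \ref{thmsuperdiff}), using a deterministic Ces\`aro-type averaging argument. Since $G_n=n^{-1}\sum_{k=1}^n S_k$, all three statements concern weighted averages of a sequence whose normalized limit is known. No new martingale input is needed.

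For (i), Theorem \ref{thmdiffu} gives $S_k/k\to0$ a.s. Fix $\epsilon>0$ and choose $k_0=k_0(\omega)$ such that $|S_k|\le\epsilon k$ for $k\ge k_0$. Then
\begin{equation*}
\frac{|G_n|}{n}\le\frac{1}{n^2}\sum_{k=1}^{k_0}|S_k|+\frac{\epsilon}{n^2}\sum_{k=k_0}^{n}k .
\end{equation*}
The first term tends to $0$ and the second is at most $\epsilon$. Hence $\limsup_n |G_n|/n\le\epsilon$, and letting $\epsilon\downarrow 0$ gives the claim.

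For (ii), use Theorem \ref{thmcirit}, namely $S_k=o(\sqrt{k}\log k)$ a.s. The same splitting gives $|G_n|\le C(\omega)/n+\epsilon n^{-1}\sum_{k\le n}\sqrt{k}\log k$. Since $\sum_{k\le n}\sqrt{k}\log k\sim \tfrac23 n^{3/2}\log n$, dividing by $\sqrt{n}\log n$ yields $\limsup_n |G_n|/(\sqrt n\log n)\le \tfrac23\epsilon$, and the result follows.

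For (iii), set $\delta=2a(p-1)+1$. In the regime $p>(4a-1)/4a$ we have $\delta\in(1/2,1]$; in particular $\delta>-1$. Theorem \ref{thmsuperdiff} gives $S_k=k^{\delta}(L+r_k)$ with $r_k\to0$ a.s. Then
\begin{equation*}
\frac{G_n}{n^{\delta}}=\frac{L}{n^{\delta+1}}\sum_{k=1}^n k^{\delta}+\frac{1}{n^{\delta+1}}\sum_{k=1}^n k^{\delta}r_k .
\end{equation*}
The first term converges to $L/(\delta+1)=L/(2a(p-1)+2)$, since $\sum_{k\le n}k^\delta\sim n^{\delta+1}/(\delta+1)$. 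The second term tends to $0$ by the Toeplitz lemma, because the weights $k^\delta/\sum_{j\le n}j^\delta$ form a regular summation method. This gives the stated limit with $L$ as in \eqref{Lvariab}.

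There is no real obstacle here; the only care needed is in the Toeplitz step of (iii). It requires $\delta>-1$, which holds in this regime, and the finitely many indices $k<a$, where the walk follows the standard ERW, are absorbed into the negligible initial segment. Alternatively, one could follow Chen and Laulin (2023) and express $G_n$ through the martingales $M_n$ and $N_n$ via summation by parts. The direct averaging argument suffices for the almost sure statements and avoids this.
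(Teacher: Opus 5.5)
Your argument is correct, and it follows a different route from the one the paper has in mind.

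The paper omits the proof and defers to Chen and Laulin (2023). That is the martingale route you mention at the end:
\begin{itemize}
\item one rewrites $\sum_{k\le n}S_k$ by summation by parts as a combination of $S_n$ (equivalently, of $M_n$ and $N_n$) and further martingale transforms with explicit Gamma-function weights;
\item one then applies the martingale strong laws (Theorems \ref{thm_LLN1} and \ref{thm_LLN2}) to each piece.
\end{itemize}

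You instead treat all three statements as purely deterministic consequences of the almost sure asymptotics for $S_n$ already established in Theorems \ref{thmdiffu}, \ref{thmcirit} and \ref{thmsuperdiff}, via Ces\`aro/Toeplitz averaging. The only things to check are the following, and you handle each:
\begin{itemize}
\item $\sum_{k\le n}\sqrt{k}\log k\sim\frac23 n^{3/2}\log n$;
\item $\delta=2a(p-1)+1\in(1/2,1]$, so the Toeplitz weights $k^\delta/\sum_{j\le n}j^\delta$ are regular;
\item the constant $1/(\delta+1)=1/(2a(p-1)+2)$ comes out as stated.
\end{itemize}

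Your route is shorter and needs no new martingale. It also makes explicit that, as stated, the center-of-mass results carry no probabilistic content beyond the results for $S_n$.

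What the martingale route buys is a representation of $G_n$ itself as a combination of martingale transforms. That is what one needs for second-order results on $G_n$: a law of iterated logarithm with the sharp constant, asymptotic normality, or a quadratic strong law. These depend on the joint fluctuations of $(S_k)_{k\le n}$, not only on the individual growth of each $S_k$. For example, averaging the law of iterated logarithm for $S_n$ yields only an upper bound on $\limsup_n G_n/\sqrt{2n\log\log n}$, not its exact value.

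For the almost sure statements actually claimed here, your averaging argument is fully sufficient.
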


\section{ERW with delayed amnesia and random step sizes}\label{randmstep}
Here, we study an extension of the ERW with delayed amnesia by considering random step sizes. We call it the ERW with delayed amnesia and random step sizes, and denote it by $\{W_n\}_{n\ge1}$. It is defined as follows: $W_n\coloneqq T_1+T_2+\dots+T_n$, where $T_k=X_kZ_k$ for all $k=1,2,\dots,n$. Here, $\{Z_n\}_{n\ge1}$ is a sequence of step sizes of the walk which are iid positive random variables. Let $Z_1$ has finite mean $\mu$ and finite variance $\sigma^2$. We assume that $Z_k$'s are independent of $X_k$'s. Then, $\{\mathscr{M}_n,\mathcal{G}_n\}_{n\ge1}$ is a zero mean martingale, where $\mathscr{M}_n\coloneqq W_n-\mu S_n$ and $\mathcal{G}_n=\sigma(T_1, T_2,\dots, T_n)$.

For $n\ge 1$, let $\delta_{n}=Z_{n}-\mu$. Also, let us consider the following martingale differences:
\begin{equation}\label{Delmathscr}
\Delta \mathscr{M}_{n}=\mathscr{M}_{n}-\mathscr{M}_{n-1}=X_{n}\delta_{n},
\end{equation}
with $\mathscr{M}_0=0$. Then, $\mathbb{E}(\delta_{n+1}|\mathcal{G}_n)=0$ and
$\sup_{n\ge1}\mathbb{E}(\delta_{n+1}^2|\mathcal{G}_n)=\sigma^2<\infty$.

Note that $\mathscr{M}_n=X_1\delta_1+X_2\delta_2+\dots+X_n\delta_n$. Let $v_n=X_1^2+X_2^2+\dots+X_n^2=n$. Then, $\lim_{n\to \infty}v_n=\infty$. Thus, from Theorem \ref{thm_LLN2}, we get
\begin{equation}\label{W_ncong}
\lim_{n\to\infty}\frac{W_n-\mu S_n}{n}=0\ \ \text{a.s.}
\end{equation}
By using Theorem \ref{thmdiffu}, Theorem \ref{thmcirit} and Theorem \ref{thmsuperdiff} in \eqref{W_ncong}, we get the following result:
\begin{theorem}
Let $0\le p<1$. Then, $\lim_{n\to \infty}W_n/n=0$ a.s.
\end{theorem}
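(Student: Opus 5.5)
Write $W_n/n=(W_n-\mu S_n)/n+\mu S_n/n$. By \eqref{W_ncong}, the first term tends to $0$ a.s., so it suffices to show $S_n/n\to0$ a.s. for every $0\le p<1$. I would split into the three regimes of Proposition \ref{asymps_n}.

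In the diffusive regime $0\le p<(4a-1)/4a$, Theorem \ref{thmdiffu} gives $S_n/n\to0$ a.s. directly. In the critical regime $p=(4a-1)/4a$, Theorem \ref{thmcirit} gives $S_n/(\sqrt{n}\log n)\to0$ a.s. Since $\sqrt{n}\log n/n\to0$, this yields $S_n/n\to0$ a.s.

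In the superdiffusive regime $(4a-1)/4a<p<1$, Theorem \ref{thmsuperdiff} gives $S_n/n^{2a(p-1)+1}\to L$ a.s. with $L$ a finite random variable. The exponent satisfies $2a(p-1)+1<1$ precisely because $p<1$. Hence
\[
\frac{S_n}{n}=\frac{S_n}{n^{2a(p-1)+1}}\,n^{2a(p-1)}\to L\cdot 0=0 \quad \text{a.s.}
\]

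Combining the three cases with \eqref{W_ncong} gives $W_n/n\to0$ a.s. The only delicate point is excluding $p=1$. There $X_n\equiv X_1$, so $S_n=nX_1$ and $W_n/n\to\mu X_1\neq0$; the exclusion is therefore necessary, and the argument above is where $p<1$ is used. One should also confirm that $L$ in \eqref{Lvariab} is a.s. finite. This holds because $M$ is the a.s. limit of an $\mathbb{L}^2$-bounded martingale.
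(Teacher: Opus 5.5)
Your proposal is correct and is exactly the paper's one-line argument: the paper inserts Theorems \ref{thmdiffu}, \ref{thmcirit} and \ref{thmsuperdiff} into \eqref{W_ncong}, and the details you supply are precisely what it leaves implicit, namely $\sqrt{n}\log n/n\to0$, $2a(p-1)+1<1$ exactly when $p<1$, and a.s.\ finiteness of $L$. One caveat, inherited from the paper rather than introduced by you: Theorem \ref{thmdiffu} rests on $N_n$ and $c_n$, which need $2a(1-p)\neq1$, so the diffusive value $p=(2a-1)/(2a)$ is tacitly excluded there; it can be covered by writing $S_n-S_{a-1}=\sum_{k=a}^{n}\varepsilon_k+\sum_{k=a}^{n-1}\mathbb{E}(X_{k+1}\mid\mathcal{F}_k)$ and bounding both sums with \eqref{XnYn} and Theorem \ref{thm_LLN2}.
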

 
The predictable quadratic variation of $\{\mathscr{M}_n\}_{n\ge 1}$ is
\begin{equation}\label{quadvar}
\langle \mathscr{M}\rangle_n=\sum_{k=1}^{n}\mathbb{E}((\Delta \mathscr{M}_k)^2|\mathcal{G}_{k-1})=n\sigma^2.
\end{equation}
So, $\lim_{n\to\infty}\langle \mathscr{M}\rangle_n=\infty$ a.s. Let $y_n=\sqrt{2\log \log \langle \mathscr{M}\rangle_n}$ and the sequence $\{Z_n\}_{n\ge 1}$ be uniformly bounded. Then, 
\begin{equation*}
|\Delta \mathscr{M}_n|\le L_n\frac{\sqrt{\langle \mathscr{M}\rangle_n}}{y_n}.
\end{equation*}
Here, $L_n=by_n/\sqrt{\langle \mathscr{M}\rangle_n}$, where $b$ is some constant. 

Note that $L_n$ is $\mathcal{G}_{n-1}$-measurable and $\lim_{n\to \infty}L_n=0$ a.s. Thus, from Theorem \ref{stoutthm2}, we get
\begin{equation*}
\limsup_{n\to\infty}\frac{\mathscr{M}_n}{y_n\sqrt{\langle \mathscr{M}\rangle_n}}=1\ \ \text{a.s.}
\end{equation*}
Therefore, 
\begin{equation}\label{lilWn}
\limsup_{n\to\infty}\frac{\mathscr{M}_n}{\sqrt{2n\log\log n}}=|\sigma|\ \ \text{a.s.}
\end{equation}

Now, by using Theorem \ref{critlil} in \eqref{lilWn}, we get the following result:
\begin{theorem}
Let $p=(4a-1)/4a$ and $\{Z_n\}_{n\ge 1}$ be uniformly bounded. Then, we have the following law of iterated logarithm for $\{W_n\}_{n\ge1}$:
\begin{equation*}
\limsup_{n\to\infty}\frac{W_n}{\sqrt{2n\log n\log \log\log n}}=\mu (2a-1)\ \ \text{a.s.}
\end{equation*}
\end{theorem}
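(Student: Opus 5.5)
The plan is to write $W_n=\mathscr{M}_n+\mu S_n$ and to show that, at the critical rate $\sqrt{2n\log n\log\log\log n}$, the martingale part $\mathscr{M}_n$ vanishes. The whole law of iterated logarithm then comes from $\mu S_n$, and Theorem \ref{critlil} supplies its limsup.

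\textbf{Step 1 (the martingale part vanishes).} By \eqref{lilWn}, $\limsup_{n\to\infty}|\mathscr{M}_n|/\sqrt{2n\log\log n}=|\sigma|$ a.s. This requires also applying Theorem \ref{stoutthm2} to $-\mathscr{M}_n$, which works identically because its increments satisfy the same bound. Since
\begin{equation*}
\frac{\sqrt{2n\log\log n}}{\sqrt{2n\log n\log\log\log n}}=\sqrt{\frac{\log\log n}{\log n\,\log\log\log n}}\longrightarrow 0,
\end{equation*}
it follows that $\mathscr{M}_n/\sqrt{2n\log n\log\log\log n}\to 0$ a.s.

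\textbf{Step 2 (transfer the limsup).} Write
\begin{equation*}
\frac{W_n}{\sqrt{2n\log n\log\log\log n}}=\mu\frac{S_n}{\sqrt{2n\log n\log\log\log n}}+\frac{\mathscr{M}_n}{\sqrt{2n\log n\log\log\log n}}.
\end{equation*}
The second term tends to $0$ a.s. by Step 1. A limsup is unchanged by adding a sequence that converges to $0$, and $\mu>0$ because the $Z_n$ are positive. Multiplying Theorem \ref{critlil} by $\mu$ therefore gives the limsup $\mu(2a-1)$, which is the claim.

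\textbf{Main obstacle (filtrations).} Theorem \ref{critlil} concerns $S_n$ and is stated under $\{\mathcal{F}_n\}$. The identity $W_n=\mathscr{M}_n+\mu S_n$, however, is pathwise, and both almost sure statements hold on a common probability-one event. So no compatibility between $\{\mathcal{F}_n\}$ and $\{\mathcal{G}_n\}$ is actually needed.

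The one point to verify is Step 1: the martingale property of $\mathscr{M}_n$ and the conditional variance $\sigma^2$. Both use that $\delta_{n+1}$ is independent of $X_{n+1}$ and of the past, so that $\mathbb{E}(X_{n+1}\delta_{n+1}\mid\mathcal{G}_n)=0$ and $X_{n+1}^2=1$.

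This is already assumed in \eqref{Delmathscr}–\eqref{quadvar}. If one prefers, it is cleaner to work with the enlarged filtration $\sigma(X_1,\dots,X_n,Z_1,\dots,Z_n)$, under which both facts are immediate.
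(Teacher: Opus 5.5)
Your proposal is correct and follows the paper's argument: the paper also gets this theorem by combining the law of iterated logarithm \eqref{lilWn} for $\mathscr{M}_n=W_n-\mu S_n$, which makes $\mathscr{M}_n$ negligible at the scale $\sqrt{2n\log n\log\log\log n}$, with Theorem \ref{critlil} for $S_n$. Your extra remarks only spell out details the paper leaves implicit: applying Theorem \ref{stoutthm2} to $-\mathscr{M}_n$ as well to control $|\mathscr{M}_n|$, using $\mu>0$ to pull $\mu$ out of the limsup, and noting that both almost sure statements hold on a common event.
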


The next result is obtained by using Theorem \ref{thmsuperdiff} in \eqref{lilWn}.
\begin{theorem}
Let $(4a-1)/4a<p\le1$. Then,
	\begin{equation*}
		\lim_{n\to\infty}\frac{W_n}{n^{2a(p-1)+1}}=\mu L\ \ \text{a.s.},
	\end{equation*}
where $L$ is as given in \eqref{Lvariab}.
\end{theorem}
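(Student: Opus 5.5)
We decompose $W_n=\mathscr{M}_n+\mu S_n$, where $\mathscr{M}_n=W_n-\mu S_n$ is the zero-mean martingale with increments $X_n\delta_n$ introduced above. Dividing by $n^{2a(p-1)+1}$, the statement reduces to two facts:
\begin{equation*}
\lim_{n\to\infty}\frac{\mathscr{M}_n}{n^{2a(p-1)+1}}=0\ \ \text{a.s.}\qquad\text{and}\qquad \lim_{n\to\infty}\frac{\mu S_n}{n^{2a(p-1)+1}}=\mu L\ \ \text{a.s.}
\end{equation*}
The second limit is exactly \eqref{supdefcon} of Theorem \ref{thmsuperdiff} multiplied by the constant $\mu$. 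So the whole work lies in showing that $\mathscr{M}_n$ is negligible at the superdiffusive scale.

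For the first limit, the exponent satisfies $2a(p-1)+1>1/2$ throughout the regime, since $p>(4a-1)/4a$ is equivalent to $4a(p-1)+1>0$. Hence it suffices to show that $\mathscr{M}_n$ grows no faster than roughly $\sqrt{n}$ times logarithmic factors.

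\begin{itemize}
\item If the step sizes are uniformly bounded, the law of iterated logarithm \eqref{lilWn} applies. Together with the same statement for $-\mathscr{M}_n$, it gives $|\mathscr{M}_n|=O(\sqrt{n\log\log n})$ a.s.
\item To avoid the boundedness assumption altogether, we use the rate part of Theorem \ref{thm_LLN2}. Take $\Phi_{k-1}=X_k$, which is $\mathcal{G}_{k-1}$-predictable, and $\varepsilon_k=\delta_k$, which satisfies $\mathbb{E}(\delta_{k}|\mathcal{G}_{k-1})=0$ and $\sup_k\mathbb{E}(\delta_k^2|\mathcal{G}_{k-1})=\sigma^2$. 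Since $s_n=n$, this yields $\mathscr{M}_n^2=O(n(\log n)^{1+\gamma})$ a.s. for every $\gamma>0$.
\end{itemize}

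Either bound gives
\begin{equation*}
\Big(\frac{\mathscr{M}_n}{n^{2a(p-1)+1}}\Big)^2=O\Big(\frac{(\log n)^{1+\gamma}}{n^{4a(p-1)+1}}\Big)\to 0\ \ \text{a.s.}
\end{equation*}
Combining this with the second limit completes the argument.

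The only delicate point is predictability. To apply Theorem \ref{thm_LLN2} we need $X_k$ to be $\mathcal{G}_{k-1}$-measurable, and it is not: $X_k$ is revealed at time $k$. I would handle this by enlarging the filtration to $\mathcal{H}_{k-1}=\sigma(T_1,\dots,T_{k-1},X_k)$. Because the $Z_k$ are independent of the $X$'s and $Z_k$ is independent of everything before it, $\mathbb{E}(\delta_k|\mathcal{H}_{k-1})=0$ and $\mathbb{E}(\delta_k^2|\mathcal{H}_{k-1})=\sigma^2$ still hold. Moreover $\mathscr{M}_n$ is adapted to $\mathcal{H}_n$. With this filtration the hypotheses of Theorem \ref{thm_LLN2} are verified and the argument goes through.
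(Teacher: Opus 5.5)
Your proof is correct. Your first bullet is exactly the paper's argument: the paper combines Theorem \ref{thmsuperdiff} with the law of iterated logarithm \eqref{lilWn} for $\mathscr{M}_n$, using $2a(p-1)+1>1/2$. However, \eqref{lilWn} was derived via Stout's theorem under the assumption that $\{Z_n\}$ is uniformly bounded, and the statement you were given does not impose that hypothesis. The paper's LLN \eqref{W_ncong} does not close this, since it only gives $\mathscr{M}_n=o(n)$; that is too weak when $p<1$, because then $2a(p-1)+1<1$.

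Your second bullet is a genuinely better route. The rate part of Theorem \ref{thm_LLN2} gives $\mathscr{M}_n^2=O(n(\log n)^{1+\gamma})$ under finite variance alone. The loss relative to the LIL, a $(\log n)^{1+\gamma}$ factor instead of $\log\log n$, is harmless because $4a(p-1)+1>0$ leaves a polynomial margin. So your version proves the theorem as stated, without any boundedness assumption.

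Your predictability repair is also sound. It fixes a step the paper passes over silently: in deriving \eqref{W_ncong}, the paper implicitly applies Theorem \ref{thm_LLN2} with weights $X_k$ that are not $\mathcal{G}_{k-1}$-measurable. You should make two small points explicit.
\begin{enumerate}
\item The $\sigma$-algebras $\mathcal{H}_{k-1}=\sigma(T_1,\dots,T_{k-1},X_k)$ are nested because $Z_k>0$. Hence $X_k=\mathrm{sgn}(T_k)$ is $\sigma(T_k)$-measurable. Alternatively, use $\sigma(X_1,\dots,X_k,Z_1,\dots,Z_{k-1})$, which is obviously increasing.
\item The identities $\mathbb{E}(\delta_k\mid\mathcal{H}_{k-1})=0$ and $\mathbb{E}(\delta_k^2\mid\mathcal{H}_{k-1})=\sigma^2$ need the whole sequence $\{Z_n\}$ to be independent of the whole sequence $\{X_n\}$, together with the i.i.d.\ property of the $Z_n$. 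This is the intended reading of the model's independence assumption.
\end{enumerate}
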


\begin{lemma}\label{cltmathscr}
Let $\{Z_n\}_{n\ge1}$ be uniformly bounded. Then, $\mathscr{M}_n/\sqrt{n}\xrightarrow{d}\mathcal{N}(0,\sigma^2)$.
\end{lemma}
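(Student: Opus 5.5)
We apply the martingale central limit theorem, Theorem \ref{CLT}, to the scalar martingale $\{\mathscr{M}_n,\mathcal{G}_n\}_{n\ge1}$ with the deterministic normalizing sequence $a_n=n$, which increases to $\infty$. Two conditions must be checked: the convergence of $a_n^{-1}\langle\mathscr{M}\rangle_n$, and Lindeberg's condition.

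For the first condition we use \eqref{quadvar}. Because $X_k^2=1$ and $\delta_k$ is independent of $\mathcal{G}_{k-1}$ with variance $\sigma^2$, we have $\langle \mathscr{M}\rangle_n=n\sigma^2$ exactly. Hence $n^{-1}\langle\mathscr{M}\rangle_n=\sigma^2$ for every $n$, and condition (i) of Theorem \ref{CLT} holds trivially with $\Lambda=\sigma^2$. This is deterministic, so convergence in probability is immediate.

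For Lindeberg's condition we use \eqref{Delmathscr}, which gives $|\Delta\mathscr{M}_k|=|X_k\delta_k|=|\delta_k|$. Since $\{Z_n\}$ is uniformly bounded, say $0<Z_n\le b'$, we get $|\delta_k|\le b'+\mu=:C$, a deterministic constant. For any $\varepsilon>0$, once $n$ is large enough that $\varepsilon\sqrt{n}>C$, the indicator $\mathbb{I}_{\{|\Delta\mathscr{M}_k|\ge\varepsilon\sqrt{n}\}}$ vanishes identically for all $k\le n$. The Lindeberg sum is then exactly zero for large $n$, so condition (ii) holds.

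If one preferred to avoid boundedness, the same term could be handled by an i.i.d. argument. We would bound the Lindeberg sum by $n^{-1}\sum_{k\le n}\mathbb{E}(\delta_k^2\mathbb{I}_{\{|\delta_k|\ge\varepsilon\sqrt n\}})=\mathbb{E}(\delta_1^2\mathbb{I}_{\{|\delta_1|\ge\varepsilon\sqrt n\}})$. This works because $\delta_k$ is independent of $\mathcal{G}_{k-1}$, and $|X_k|=1$ means $X_k$ is effectively irrelevant. The bound tends to $0$ by dominated convergence.

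The only point needing care, and the main potential obstacle, is the conditional independence step. We must justify that $\delta_k$ is independent of $\mathcal{G}_{k-1}=\sigma(T_1,\dots,T_{k-1})$, and also that conditioning on $\mathcal{G}_{k-1}$ does not affect $X_k^2$. The first follows from the i.i.d. assumption on the $Z_n$ together with their independence from the $X_n$. The second is automatic since $X_k^2=1$. With both conditions verified, Theorem \ref{CLT} yields $\mathscr{M}_n/\sqrt{n}\xrightarrow{d}\mathcal{N}(0,\sigma^2)$.
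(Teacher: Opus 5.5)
Your proof is correct and follows the paper's route: apply Theorem \ref{CLT} with $a_n=n$, using $\langle\mathscr{M}\rangle_n=n\sigma^2$ from \eqref{quadvar} for condition (i), and the uniformly bounded increments $|\Delta\mathscr{M}_k|=|\delta_k|$ from \eqref{Delmathscr} to make the Lindeberg sum vanish for large $n$. Your side remark is also valid: since $\delta_k$ is independent of $\mathcal{G}_{k-1}$ and the $\delta_k$ are identically distributed with finite variance, Lindeberg's condition holds by dominated convergence, so the uniform boundedness hypothesis is not actually needed for this lemma.
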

\begin{proof}
From \eqref{Delmathscr} and \eqref{quadvar}, we get
\begin{equation}\label{cltcondi2}
	\lim_{n\to \infty}\frac{1}{n}\sum_{k=1}^{n}\mathbb{E}((\Delta\mathscr{M}_k)^2 \mathbb{I}_{\{|\Delta\mathscr{M}_k| \ge \varepsilon \sqrt{n}\}}|\mathcal{G}_{k-1})=0\ \ \text{a.s.},
\end{equation}
for all $\varepsilon>0$ and
\begin{equation}\label{cltcondi1}
	\frac{\langle\mathscr{M}\rangle_{n}}{n}=\sigma^2,
\end{equation}
respectively. The required result follows from \eqref{cltcondi2}, \eqref{cltcondi1} and Theorem \ref{CLT}.
\end{proof}

\begin{theorem}
Let $0\le p<(4a-1)/4a$ and $\{Z_n\}_{n\ge1}$ be uniformly bounded. Then,
\begin{equation}\label{normrandmstep}
\frac{W_n}{\sqrt{n}}\xrightarrow{d}\mathcal{N}\Big(0,\sigma^2+\frac{\mu^2K}{(2a(1-p)-1)^2}\Big),
\end{equation}
where $K$ is as given in \eqref{K}.
\end{theorem}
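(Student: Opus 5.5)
We need the joint limit of $W_n/\sqrt{n}=\mathscr{M}_n/\sqrt{n}+\mu S_n/\sqrt{n}$, not just the two marginal limits (Lemma \ref{cltmathscr} and Theorem \ref{thmasynrdiff}). The cleanest route is to build a three-dimensional martingale and apply Theorem \ref{thm_asymnorm} once. Three points have to be settled first:

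(a) the filtration;
(b) the starting index, since $M_n,N_n$ are only defined for $n\ge a$;
(c) the observation that $\mathscr{M}$ is uncorrelated with $(M,N)$ in the predictable sense.

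For (a), work with $\mathcal{H}_n=\sigma(X_1,\dots,X_n,Z_1,\dots,Z_n)$. Since the $Z_k$ are iid and independent of the $X_k$'s, $\{\mathscr{M}_n,\mathcal{H}_n\}$ is still a martingale with $\mathbb{E}((\Delta\mathscr{M}_{n+1})^2|\mathcal{H}_n)=\sigma^2$. For the same reason, \eqref{epsPow1}, \eqref{epsilon2} remain valid with $\mathcal{F}_n$ replaced by $\mathcal{H}_n$. For (b), start the martingale $\mathscr{M}$ at time $a$. The finitely many early terms $\mathscr{M}_{a-1}$ and $\mu S_{a-1}$ are bounded, so they vanish after division by $\sqrt{n}$.

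Define
\begin{equation*}
\mathcal{V}_n=\begin{pmatrix} M_n\\ N_n\\ \mathscr{M}_n-\mathscr{M}_{a-1}\end{pmatrix},\qquad
D_n=\frac{1}{\sqrt{n}}\begin{pmatrix} b_n^{-1}c_n&0&0\\0&1&0\\0&0&1\end{pmatrix},\quad n\ge a.
\end{equation*}
The key computation, which settles (c), is the cross bracket:
\begin{equation*}
\mathbb{E}(\Delta M_{k}\,\Delta\mathscr{M}_{k}|\mathcal{H}_{k-1}) = C\,b_kc_k^{-1}\,\mathbb{E}(\varepsilon_k X_k\delta_k|\mathcal{H}_{k-1})=C\,b_kc_k^{-1}\,\mathbb{E}(\varepsilon_kX_k|\mathcal{H}_{k-1})\,\mathbb{E}(\delta_k)=0.
\end{equation*}
This holds because $\delta_k$ is independent of $(X_k,\varepsilon_k)$ given $\mathcal{H}_{k-1}$ and has mean zero. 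The same argument gives $\mathbb{E}(\Delta N_k\Delta\mathscr{M}_k|\mathcal{H}_{k-1})=0$. Hence $\langle\mathcal{V}\rangle_n$ is block diagonal: the upper $2\times2$ block is exactly \eqref{predquad}, and the $(3,3)$ entry is $(n-a+1)\sigma^2$. By \eqref{anmnt},
\begin{equation*}
D_n\langle\mathcal{V}\rangle_nD_n^t\to \mathrm{diag}(A,\sigma^2)\ \text{a.s.},
\end{equation*}
and $\|D_n\|\to0$ as in the proof of Theorem \ref{thmasynrdiff}.

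Next, verify the Lindeberg condition \eqref{Lindecondi} through the fourth moment bound, as in \eqref{ineqlind}. The first two components are handled by \eqref{norm4}. For the third, uniform boundedness of $Z_n$ gives $|\Delta\mathscr{M}_k|\le b'$ for some constant $b'$, so its contribution to $\sum_k\mathbb{E}(\|D_n\Delta\mathcal{V}_k\|^4|\mathcal{H}_{k-1})$ is $O(n\cdot n^{-2})\to0$. Theorem \ref{thm_asymnorm} then gives
\begin{equation*}
D_n\mathcal{V}_n\xrightarrow{d}\mathcal{N}(0,\mathrm{diag}(A,\sigma^2)).
\end{equation*}

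Finally, recall that $S_n=N_n-c_nY_n=N_n-b_n^{-1}c_nM_n$. Taking $w=(-\mu,\mu,1)^t$,
\begin{equation*}
w^tD_n\mathcal{V}_n=\frac{\mu S_n+\mathscr{M}_n-\mathscr{M}_{a-1}}{\sqrt{n}}=\frac{W_n}{\sqrt{n}}+o(1)\ \text{a.s.}
\end{equation*}
Its limit law is $\mathcal{N}(0,\mu^2v^tAv+\sigma^2)$ with $v=(-1,1)^t$. From the proof of Theorem \ref{thmasynrdiff}, $v^tAv=K/(2a(1-p)-1)^2$, which yields \eqref{normrandmstep} by Slutsky's lemma.

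The main obstacle is the filtration issue in (a). The paper's $\mathcal{G}_n=\sigma(T_1,\dots,T_n)$ does not obviously carry $M_n,N_n$. Working on the enlarged filtration $\mathcal{H}_n$ resolves this, and the conditional-independence step is what makes the cross brackets vanish.
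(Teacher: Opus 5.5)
Your proof is correct, but it takes a different route from the paper's.

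\textbf{The paper's route.} The paper never builds a joint martingale. It conditions on $\mathcal{F}=\sigma(\{X_k:k\ge1\})$. Given $\mathcal{F}$, the summands $X_k\delta_k$ of $\mathscr{M}_n$ are independent with variance $\sigma^2$, so the classical CLT gives $\mathbb{E}(e^{is\mathscr{M}_n/\sqrt{n}}|\mathcal{F})\to e^{-s^2\sigma^2/2}$ a.s. Because $S_n$ is $\mathcal{F}$-measurable, the joint characteristic function of $(\mathscr{M}_n,\mu S_n)/\sqrt{n}$ then factorizes in the limit by dominated convergence. The paper finishes by inserting Theorem \ref{thmasynrdiff} as a black box.

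\textbf{Your route.} You get asymptotic independence at the level of predictable brackets, through $\mathbb{E}(\Delta M_k\Delta\mathscr{M}_k|\mathcal{H}_{k-1})=\mathbb{E}(\Delta N_k\Delta\mathscr{M}_k|\mathcal{H}_{k-1})=0$. You then apply Theorem \ref{thm_asymnorm} once to the three-dimensional martingale. Your supporting checks are sound:
\begin{itemize}
\item the bracket limit $\mathrm{diag}(A,\sigma^2)$;
\item $\|D_n\|\to0$;
\item the fourth-moment Lindeberg bound, which still tends to $0$ when $2a(1-p)<1$;
\item the final linear form $w=(-\mu,\mu,1)^t$, giving variance $\mu^2v^tAv+\sigma^2$.
\end{itemize}

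\textbf{What each approach buys.} The paper's argument is shorter and reuses the marginal CLT without reopening it. It also needs only $\sigma^2<\infty$, since the uniform boundedness of $Z_n$ plays no role there. Your argument redoes the two-dimensional bracket computation. In return, it gives the joint Gaussian limit of $(c_nY_n,N_n,\mathscr{M}_n)/\sqrt{n}$ with explicit covariance. Since its hypotheses are verified almost surely, it is also the natural input to Theorem \ref{qsl} for a quadratic strong law for $W_n$.

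\textbf{On the filtration.} The issue you flag as the main obstacle is not really one. Since $Z_k>0$, you have $X_k=T_k/|T_k|$ and $Z_k=|T_k|$, so $\mathcal{G}_n=\mathcal{H}_n$ already.
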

\begin{proof}
The joint characteristic function of $\big(\mathscr{M}_n/\sqrt{n},\mu S_n/\sqrt{n}\big)$ is given by
\begin{equation}\label{CF}
\phi_n(s,t)=\mathbb{E}\Big(\exp\Big\{i\Big(\frac{s\mathscr{M}_n}{\sqrt{n}},\frac{t\mu S_n}{\sqrt{n}}\Big)\Big\}\Big), \ \ s,\,t\in\mathbb{R}.
\end{equation}
Let $\mathcal{F}=\sigma(\{X_k:k\ge 1\})$. Then, conditional on $\mathcal{F}$, $\{X_n(Z_n-\mu)\}_{n\ge 1}$ is a sequence of independent random variables. Therefore, by the classical CLT for independent random variables, we obtain
\begin{equation}\label{fordct}
	\lim_{n\to \infty}\mathbb{E}\Big(\exp\Big\{is\frac{\mathscr{M}_n}{\sqrt{n}}\Big\}\Big|\mathcal{F}\Big)=\exp\Big\{-\frac{1}{2}s^2\sigma^2\Big\}\ \ \text{a.s.}
\end{equation}
From \eqref{CF}, we have
\begin{align}\label{CF2}
\phi_n(s,t)&=f_n+\exp\Big\{-\frac{1}{2}s^2\sigma^2\Big\}\mathbb{E}\Big(\exp\Big\{it\frac{\mu S_n}{\sqrt{n}}\Big\}\Big),
\end{align}
where
\begin{equation*}
f_n=\mathbb{E}\Big(\Big(\mathbb{E}\Big(\exp\Big\{is\frac{\mathscr{M}_n}{\sqrt{n}}\Big\}\Big|\mathcal{F}\Big)-\exp\Big\{-\frac{1}{2}s^2\sigma^2\Big\}\Big)\exp\Big\{it\frac{\mu S_n}{\sqrt{n}}\Big\}\Big).
\end{equation*} 
Note that
\begin{equation}\label{modVn}
|f_n|\le \mathbb{E}\Big(\Big|\mathbb{E}\Big(\exp\Big\{is\frac{\mathscr{M}_n}{\sqrt{n}}\Big\}\Big|\mathcal{F}\Big)-\exp\Big\{-\frac{1}{2}s^2\sigma^2\Big\}\Big|\Big).
\end{equation}
By using \eqref{fordct} and the dominated convergence theorem in \eqref{modVn}, we obtain $\lim_{n\to\infty}f_n=0$.
Thus, from \eqref{CF2}, we have
\begin{equation}\label{diffcf}
\lim_{n\to\infty}\phi_n(s,t)=\exp\Big\{-\frac{1}{2}s^2\sigma^2\Big\}\lim_{n\to\infty}\mathbb{E}\Big(\exp\Big\{it\frac{\mu S_n}{\sqrt{n}}\Big\}\Big).
\end{equation}
By using \eqref{Snnordiff} in \eqref{diffcf}, we get
\begin{equation*}
\lim_{n\to\infty}\phi_n(s,s)=\exp\Big\{-\frac{1}{2}s^2\Big(\sigma^2+\frac{\mu^2K}{(2a(1-p)-1)^2}\Big)\Big\},
\end{equation*}
where $K$ is as given in \eqref{K}. This completes the proof.
\end{proof}

\begin{remark}
On substituting $\mu=a=1$ in \eqref{normrandmstep}, we get $W_n/\sqrt{n}\xrightarrow{d}\mathcal{N}(0,\sigma^2+1/(3-4p))$, which is the corresponding result for ERW with random step sizes (see Fan and Shao (2024), p. 2052).
\end{remark}

\begin{theorem}
Let $p=(4a-1)/4a$ and $\{Z_n\}_{n\ge1}$ be uniformly bounded. Then,
\begin{equation*}
	\frac{W_n}{\sqrt{n\log n}}\xrightarrow{d}\mathcal{N}\big(0,(\mu(2a-1))^2\big).
\end{equation*}
\end{theorem}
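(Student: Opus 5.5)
The plan is to write
\[
\frac{W_n}{\sqrt{n\log n}}=\frac{\mathscr{M}_n}{\sqrt{n\log n}}+\mu\,\frac{S_n}{\sqrt{n\log n}},
\]
where $\mathscr{M}_n=W_n-\mu S_n$. I would treat the two terms separately and then combine them with Slutsky's theorem. This is simpler than the characteristic function argument used in the diffusive case. There the two pieces are of the same order $\sqrt{n}$, so a joint limit is needed. In the critical regime the normalisation $\sqrt{n\log n}$ dominates the fluctuations of $\mathscr{M}_n$, and the first term should simply vanish.

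\textbf{Step 1: the first term vanishes.} By Lemma \ref{cltmathscr}, $\mathscr{M}_n/\sqrt{n}\xrightarrow{d}\mathcal{N}(0,\sigma^2)$, so $\{\mathscr{M}_n/\sqrt{n}\}$ is tight. Multiplying by the deterministic factor $(\log n)^{-1/2}\to0$ gives $\mathscr{M}_n/\sqrt{n\log n}\xrightarrow{p}0$.

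As a cross-check, a more direct route is available. From \eqref{quadvar}, $\mathbb{E}(\mathscr{M}_n^2)=n\sigma^2$, hence $\mathbb{E}\big(\mathscr{M}_n^2/(n\log n)\big)=\sigma^2/\log n\to0$, and convergence in probability follows from Chebyshev's inequality. This version does not even need uniform boundedness of $\{Z_n\}$. Alternatively, the law of iterated logarithm \eqref{lilWn} gives $\mathscr{M}_n=O(\sqrt{n\log\log n})$ a.s., so the first term in fact tends to $0$ almost surely.

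\textbf{Step 2: the second term.} For $p=(4a-1)/4a$, the critical-regime central limit theorem \eqref{Snnorcrit} gives $S_n/\sqrt{n\log n}\xrightarrow{d}\mathcal{N}(0,(2a-1)^2)$. Multiplying by $\mu$ yields $\mu S_n/\sqrt{n\log n}\xrightarrow{d}\mathcal{N}(0,(\mu(2a-1))^2)$.

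\textbf{Step 3: combine.} Slutsky's theorem then gives the stated limit $\mathcal{N}\big(0,(\mu(2a-1))^2\big)$.

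There is no real obstacle. The only point needing care is that $\mathscr{M}_n$ is a martingale with respect to $\mathcal{G}_n$, whereas $S_n$ is defined through $\mathcal{F}_n$. Slutsky's theorem requires only marginal convergence in probability of one summand, so no joint structure or independence between $\{X_k\}$ and $\{Z_k\}$ is needed beyond what was already used in Lemma \ref{cltmathscr}.
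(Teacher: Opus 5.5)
Your proposal is correct and matches the paper's proof: the paper also obtains $\mathscr{M}_n/\sqrt{n\log n}\xrightarrow{d}0$ from Lemma \ref{cltmathscr} via Slutsky's theorem, and then combines this with \eqref{Snnorcrit} by a second application of Slutsky's theorem. Your Chebyshev cross-check, based on $\mathbb{E}(\mathscr{M}_n^2)=n\sigma^2$, is a worthwhile addition, because it shows that the uniform boundedness of $\{Z_n\}$ is not needed for this result.
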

\begin{proof}
By using Slutsky's theorem in Lemma \ref{cltmathscr}, we get
\begin{equation}\label{critMn}
	\frac{\mathscr{M}_n}{\sqrt{n\log n}}\xrightarrow{d}0.
\end{equation}
Again, on applying Slutsky's theorem in \eqref{Snnorcrit} and \eqref{critMn}, we obtain the required result.
\end{proof}

\section*{Acknowledgement}
The first author thanks Government of India for the grant of Prime Minister's Research Fellowship, ID 1003066.

\end{document}